\theoremstyle{plain}
\newtheorem{thm}{Theorem}
\newtheorem{prop}{Proposition}[section]
\newtheorem{lem}[prop]{Lemma}
\newtheorem{cor}[prop]{Corollary}
\newtheorem{defi}[prop]{Definition}
\newtheorem{rmk}[prop]{Remark}
\newtheorem{alg}[prop]{Algorithm}
\newcommand {\R} {\mathbb{R}} \newcommand {\Z} {\mathbb{Z}}
 \newcommand {\N} {\mathbb{N}}
\newcommand {\p} {\partial}
\newcommand {\Cof} {\text{Cof}}
\newcommand {\supp} {\text{supp}}
\newcommand{\lbr}{\langle}
\newcommand{\rbr}{\rangle}
\newcommand{\vc}[1]{\mathbf{#1}}
\newcommand {\eps} {\varepsilon}
\newcommand{\mt}[1]{\mathsf{#1}}
\DeclareMathOperator{\inter}{int}
\DeclareMathOperator{\rank}{rank}
\newcommand{\beq}[0]{\begin{equation}}
\newcommand{\eeq}[0]{\end{equation}}
\DeclareMathOperator{\tr}{tr}
\DeclareMathOperator {\dist} {dist}
\DeclareMathOperator {\conv}{conv}
\DeclareMathOperator {\inte} {int}
\DeclareMathOperator {\Per} {Per}
\DeclareMathOperator {\sgn} {sgn}
\begin{document}

\title[The two-well problem]{Convex Integration Solutions for the Geometrically Non-linear Two-Well Problem with Higher Sobolev Regularity}

\author[Francesco Della Porta]{Francesco Della Porta}
\address{Max-Planck-Institute for Mathematics in the Sciences, Inselstr. 22, 04103 Leipzig}
\email{Francesco.DellaPorta@mis.mpg.de}

\author[Angkana Rüland]{Angkana Rüland}
\address{Max-Planck-Institute for Mathematics in the Sciences, Inselstr. 22, 04103 Leipzig}
\email{rueland@mis.mpg.de}

\begin{abstract}
In this article we discuss higher Sobolev regularity of convex integration solutions for the geometrically non-linear two-well problem. More precisely, we construct solutions to the differential inclusion $\nabla\vc u\in K$ subject to suitable affine boundary conditions for $\vc u$ with
$$
K:= SO(2)\left[\begin{array}{ ccc } 1 & \delta \\ 0 & 1  \end{array}\right] \cup SO(2)\left[\begin{array}{ ccc } 1 & -\delta \\ 0 & 1  \end{array}\right]
$$
such that the associated deformation gradients $\nabla\vc u$ enjoy higher Sobolev regularity. This provides the first result in the modelling of phase transformations in shape-memory alloys where  $K^{qc} \neq K^{c}$, and where the energy minimisers constructed by convex integration satisfy higher Sobolev regularity.
We show that in spite of additional difficulties arising from the treatment of the non-linear matrix space geometry, it is possible to deal with the geometrically non-linear two-well problem within the framework outlined in \cite{RZZ18}. Physically, our investigation of convex integration solutions at higher Sobolev regularity is motivated by viewing regularity as a possible selection mechanism of microstructures.
\end{abstract}

\maketitle

\section{Introduction}

Shape-memory alloys are solid materials which undergo a first order diffusionless solid-solid phase transformation in which symmetry is reduced in passing from the high temperature phase (\emph{austenite}) to the low temperature phase (\emph{martensite}). This reduction of symmetry gives rise to complex microstructures. Mathematically, these have been successfully modelled within the framework of the calculus of variations \cite{BJ92, Ball:ESOMAT} by minimizing energy functionals of the form
\begin{align}
\label{eq:energy_min}
\int\limits_{\Omega} W(\nabla \vc u, \theta) d\vc x.
\end{align}
Here $\Omega$ denotes a reference domain, $W: \R^{n\times n}_{+} \times (0,\infty) \rightarrow  [0,\infty)$ models a stored energy function which is typically highly non-quasiconvex, $\vc u: \Omega \rightarrow \R^n$ describes the deformation of the material and $\theta:\Omega \rightarrow (0,\infty)$ denotes temperature. In order to model the solid-solid phase transformation under consideration, the energy density is assumed to be 
\begin{itemize}
\item \emph{frame indifferent}, i.e., $W(\mt Q \mt F) = W(\mt F)$ for all $\mt Q\in SO(n)$,
\item \emph{invariant under material symmetries}, i.e., $W(\mt F \mt H) = W(\mt F)$ for all $\mt H \in \mathcal{P}$, where the group $\mathcal{P} \subset SO(n)$ models the material symmetries.
\end{itemize}
As the minimization of \eqref{eq:energy_min} (subject, e.g., to certain boundary conditions) is typically very difficult, a common first step towards the analysis of possible microstructures is the analysis of exactly \emph{stress-free solutions}, i.e., of deformations $\vc u: \Omega \rightarrow \R^2$ such that $W(\nabla \vc u, \theta) = 0$, which can also be rephrased as 
\begin{align}
\label{eq:diff_inclusion}
\nabla \vc u \in K(\theta).
\end{align}
The set $K(\theta)$ models the \emph{energy wells}, i.e., the zero energy states. By frame-indifference and material symmetry these are of the form
\begin{align*}
K(\theta):= 
\left\{
\begin{array}{ll}
\alpha(\theta) SO(n) \mbox{ for } \theta > \theta_c,\\
SO(n) \mbox{ for } \theta = \theta_c,\\
\bigcup\limits_{j=1}^{m} SO(n) \mt{U}_j \mbox{ for } \theta< \theta_c,
\end{array} \right.
\end{align*}
where $\mt{U}_j^T = \mt{U}_j \in \R^{n\times n}$ and $\mt{U}_j = \mt{P} \mt{U}_1 \mt{P}^T$ for some $\mt{P} \in \mathcal{P}$ are positive definite matrices modelling the \emph{variants of martensite}.
Here $\theta_c>0$ denotes the transformation temperature from the high temperature phase, \emph{austenite}, to the low temperature phase, \emph{martensite}.

In solving \eqref{eq:diff_inclusion} a surprising dichotomy arises. More precisely, there are physically relevant models for which on the one hand the problem \eqref{eq:diff_inclusion} is very \emph{rigid} if regularity conditions (which physically model surface energy constraints) are imposed. If for instance $m=2$, 
these further regularity assumptions force the solutions to \eqref{eq:diff_inclusion} to obey non-linear ``hyperbolic'' partial differential equations, and the solutions to propagate along characteristics, thus leading to rigidity of the differential inclusion
 (see \cite{DM1} and also \cite{K} for the case $m=3$ and \cite{R16} for the case $m=6$ in the geometrically linearised setting in three dimensions). If on the other hand, no regularity assumptions are made, then in many models a plethora of ``wild'' solutions exist \cite{MS, DaM12}; the differential inclusion \eqref{eq:diff_inclusion} becomes very flexible, the notion of characteristics is lost. 
While for a number of physical problems these ``end point cases'' are understood, a theory describing the ``transition'' from the flexible regime of ``wild'' solutions to the regime of ``rigid'' solutions is not yet available for martensitic transformations. Only very recently first results on the persistence of wild solutions at low, but positive regularity have been obtained \cite{RZZ16, RZZ18, RTZ19}. It is the purpose of this note to further study the problem in a truly ``geometrically non-linear'' regime.

In \cite{RZZ18} the authors provided a general framework for deducing higher regularity of convex integration solutions and applied this to a number of relevant phase transformation problems from the materials sciences. However, all the examples in \cite{RZZ18} are limited to the setting where $K^{qc} = K^{c}$. If this is not the case, additional difficulties arise from understanding potentially complicated matrix space geometries (e.g., the 
applications of the algorithm in \cite{RZZ18} heavily use barycentric coordinates in defining associated in-approximations). 
Discussing higher regularity convex integration solutions for the geometrically non-linear two-well problem in this article, we provide the first \emph{geometrically non-linear} application of the algorithm in \cite{RZZ18} in which $K^{qc}$ is \emph{strictly} smaller than $K^{c}$.  

To this end, we fix a temperature $\theta$ below the critical temperature and study the two-dimensional two-well problem for which 
\beq
\label{defK_intro}
K = SO(2)\mt F_0\cup SO(2)\mt F_0^{-1}
\eeq
where $\mt F_0,\mt F_0^{-1}\in\R^{2\times2}$ are respectively given by
\begin{align}
\label{eq:matrices_intro}
{\mt F}_0 = \left[\begin{array}{ ccc } 1 & \delta \\ 0 & 1  \end{array}\right],\qquad
{\mt F}_0^{-1} = \left[\begin{array}{ ccc } 1 & -\delta \\ 0 & 1  \end{array}\right],
\end{align}
and $\delta>0.$ We remark that, as shown in \cite[Sec. 5]{BJ92}, given two wells with two rank-one connections (and the physically natural condition of equal determinant), one can always reduce the problem to our case via an affine change of variables. Important properties of this phase transformation are:
\begin{itemize}
\item \emph{A large lamination convex hull:} $K^{lc} = K^{pc} = K^{c} \cap \{\mt F: \ \det(\mt F) = 1\}$ (see \eqref{eq:hulls}) and thus $\dim(K^{lc}) = 3$,
\item \emph{A dichotomy between rigidity and flexibility.} On the one hand, the two-well problem $\nabla \vc u \in K$ with $K$ as in \eqref{defK} allows for convex integration solutions \cite{MS, D, DaM12}, that menas, it is flexible if no regularity is imposed, i.e., if $\nabla \vc u $ is only bounded. On the other hand, the phase transformation is rigid if additional surface energy constraints are imposed: If $\nabla \vc u \in BV$, then a solution to the differential inclusion is locally a simple laminate (see \cite{DM1}). 
\end{itemize}
Thus, the two-well problem is one of the simplest geometrically non-linear models for martensitic phase transformations in which a dichotomy between rigidity and flexibility exists. It hence provides an ideal model setting for studying the transition between these regimes.

We remark that a physical motivation to understand the dichotomy between rigidity and flexibility is the characterisation of physically relevant microstructures. In this context, regularity could provide a selection mechanism for minimisers of \eqref{eq:energy_min} (see \cite{RTZ19}). A different approach to select physically relevant microstructures other than regularity can be found in \cite{FDP1}. We refer the reader to the discussion after Theorem \ref{thm:main} for more details.

\subsection{Main results}
In the setting of the two-well problem a main difficulty and novelty that we address is the fact that $K^{lc} \neq K^{c}$. Hence, a key aspect of our higher regularity analysis of convex integration solutions to the differential inclusion \eqref{eq:diff_inclusion} for \eqref{defK_intro} and \eqref{eq:matrices_intro} involves an explicit investigation of the matrix space geometry of the two-well problem. Seeking to embed the higher regularity question of convex integration solutions for the two-well problem into the framework of \cite{RZZ18}, we rely on an explicit double-laminate construction (which is inspired by the construction in \cite[Chapter 5.3]{P}) which yields suitable ``orthogonal'' coordinates for $K^{qc}$.

Exploiting the precise understanding of the quasiconvex hull $K^{qc}$, we obtain the following main result:

\begin{thm}
\label{thm:main}
Let $K$ be as in \eqref{defK_intro}--\eqref{eq:matrices_intro}. Let $\Omega \subset \R^2$ satisfy 
\begin{equation}
\label{eqDomain}
\tag{D}
\text{\parbox{3.8in}{\centering $\Omega$ is open, connected, and can be covered (up to a set of measure zero) by finitely many open disjoint triangles.}}
\end{equation}
Then there exists $\theta_0 >0$ such that for all $s\in (0,1)$, $p\in (1,\infty)$ with $sp < \theta_0$ and for all $\mt M \in \inter K^{qc}$ there exists a deformation $\vc u:\Omega \rightarrow \R^2$ such that
\begin{align*}
\nabla \vc u &\in K \mbox{ a.e. in } \Omega, \\
\vc u & = \mt M \vc x \mbox{ on } \partial \Omega,\\ 
\vc u &\in W^{1,\infty}(\Omega;\R^{2})\cap W^{1+s,p}(\Omega;\R^{2}).
\end{align*}
Further, for $j\in \{1,2\}$ we also have $\chi^j \in W^{s,p}(\Omega) $, where 
\begin{align*}
\chi^1(x):= 
\left\{
\begin{array}{ll}
1 \mbox{ if } (\nabla \vc u^T \nabla \vc u)(x) = \mt F_0^T \mt F_0,\\
0 \mbox{ else},  
\end{array}
\right.
\end{align*}
and $\chi^2 = 1- \chi^1$.
\end{thm}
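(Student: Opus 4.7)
The plan is to realize $\vc u$ as the limit of an iterative piecewise affine construction in the spirit of \cite{RZZ18}. The first ingredient is an \emph{in-approximation}: a decreasing sequence of open sets $U_k \subset \inter K^{qc}$ with $\dist(U_k, K) \to 0$ such that every $\mt N \in U_k$ admits a replacement on a triangle $T$, i.e.\ a piecewise affine deformation $\vc v$ with $\vc v(\vc x) = \mt N \vc x$ on $\partial T$ and $\nabla \vc v \in U_{k+1}$ almost everywhere on $T$. Starting from $\vc u_0(\vc x) = \mt M \vc x$ on the triangulation of $\Omega$ furnished by \eqref{eqDomain}, iterating this replacement on progressively finer sub-triangulations yields a Cauchy sequence $\vc u_k$ in $W^{1,\infty}$ whose limit satisfies $\nabla \vc u \in K$ a.e.\ in $\Omega$ and $\vc u = \mt M \vc x$ on $\partial \Omega$.

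The matrix-space geometry enters through an explicit double-laminate parametrisation of $\inter K^{qc}$ inspired by \cite[Chapter 5.3]{P}. Using that $K^{qc} = K^c \cap \{\det = 1\}$ is three-dimensional, every $\mt N \in \inter K^{qc}$ is first split as a rank-one convex combination of two matrices $\mt N_{\pm}$ lying in lower-dimensional ``intermediate'' fibres, each of which is then further split as a rank-one combination of elements close to $SO(2)\mt F_0$ and $SO(2)\mt F_0^{-1}$ respectively. This realises the replacement on $T$ as a nested pair of simple laminates and, crucially, furnishes smoothly varying ``orthogonal'' coordinates on $K^{qc}$ in which the two rank-one directions are decoupled, so that the replacement map can be iterated with the same structural template. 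The in-approximation $U_k$ is then obtained by shrinking the second-step parameter towards $K$ at a fixed geometric rate.

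Higher Sobolev regularity is extracted from the self-similarity of the construction through the interpolation scheme of \cite{RZZ18}. At each stage the number of triangles multiplies by a controlled factor, their diameters shrink geometrically, and the $BV$-seminorm of $\nabla \vc u_k$ grows geometrically in $k$ while $\|\nabla \vc u_k - \nabla \vc u\|_{L^\infty}$ decays geometrically. Interpolating between the uniform bound and the $BV$ growth yields $\nabla \vc u \in W^{s,p}(\Omega;\R^{2\times 2})$, hence $\vc u \in W^{1+s,p}$, for every $(s,p)$ with $sp < \theta_0$, where $\theta_0$ is determined explicitly by the two rates. The same two-scale bookkeeping applied to the indicator sequence approximating $\chi^j$ delivers $\chi^j \in W^{s,p}(\Omega)$.

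The principal obstacle I anticipate is the nonlinearity of the wells. Unlike the geometrically linearised setting, the two components of $K$ are curved submanifolds of $\R^{2\times 2}$, rank-one directions on $K^{qc}$ depend nonlinearly on the base point, and standard barycentric coordinates are unavailable; this is precisely what forces the use of the double-laminate coordinates. The technical core is therefore to prove that this parametrisation is well-defined and smooth on $\inter K^{qc}$, uniform on compact subsets, and that the quantitative separation between the two rank-one directions is preserved under iteration. All downstream estimates — uniform control of the aspect ratios of the triangles produced at each step, the geometric $BV$ growth, and the replacement bound $\dist(U_{k+1}, K) \le \lambda\, \dist(U_k, K)$ with $\lambda \in (0,1)$ — hinge on this geometric input.
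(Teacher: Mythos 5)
Your overall strategy is the same as the paper's: embed the two-well problem in the \cite{RZZ18} framework by building an in-approximation of $K$ inside $K^{qc}$, iterate a quantitative replacement construction on progressively finer triangulations, and extract $W^{s,p}$ regularity by interpolating $L^1$, $L^\infty$, and $BV$ bounds along the iteration. You also correctly identify the key geometric obstacle ($K^{lc} = K^{qc} \neq K^c$, curved wells, no linear barycentric coordinates) and the fix (the double-laminate ``orthogonal'' coordinates on $K^{qc}$ in Cauchy--Green space, following \cite[Ch.\ 5.3]{P}); the paper makes this precise via the coordinate systems $\mt F_1(\mu,\lambda)$, $\mt F_2(\mu,\lambda)$ of Lemma \ref{lemma1} and the Lipschitz control in Lemmas \ref{lemma2}--\ref{lemma3}.

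There is, however, a genuine gap in your convergence argument: you claim that $\vc u_k$ is Cauchy in $W^{1,\infty}$ and that $\|\nabla \vc u_k - \nabla \vc u\|_{L^\infty}$ decays geometrically. Neither is true, and both cannot be true in any convex integration scheme for a differential inclusion that is rigid in $BV$: if $\nabla \vc u_k$ converged uniformly one would inherit far too much regularity for the limit. At each iteration step, on a small but nonzero-measure portion of each cell the gradient jumps across a rank-one segment of essentially unit length (this is visible in Lemma \ref{lem:Conti}(iv) and in Lemma \ref{lem:diamond}(iv), where only the \emph{good} set $\Omega_{h_0,\mt R}^{\star}$ carries a smallness bound). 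What does decay geometrically is $\|\nabla \vc u_{k+1} - \nabla \vc u_k\|_{L^1}$ and $\|\chi_{k+1}^i - \chi_k^i\|_{L^1}$ (Proposition \ref{prop:L1}), and this $L^1$ decay is the decisive input: the interpolation inequality of Proposition \ref{prop:Inter} bounds $\|\cdot\|_{W^{s,p}}$ by $\|\cdot\|_{L^\infty}^{1-1/p}\bigl(\|\cdot\|_{L^1}^{1-sp}\|\cdot\|_{BV}^{sp}\bigr)^{1/p}$, and the geometric contraction is won by playing the decaying $L^1$ factor against the growing $BV$ factor, with $L^\infty$ only supplying a uniform constant. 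Your proposal never mentions the $L^1$ decay, so as written the interpolation step does not close: ``uniform bound'' times ``growing $BV$'' gives a divergent bound, and the false $L^\infty$ decay cannot stand in for the $L^1$ decay. To repair the argument you would need to prove the $L^1$ smallness, which in the paper rests on the quantitative self-improving volume fractions in Lemma \ref{lem:Conti}(vi) and the ``$\ast$''-subsets in Lemmas \ref{lem:diamond} and \ref{lem:A4}, neither of which appears in your outline.
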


\begin{rmk}
\label{rmk:D}
Arguing as in \cite{RZZ16, RZZ18}, it would have been possible to relax the condition \eqref{eqDomain}. As in the present article this is \emph{not} our main emphasis, we do not present the details for this possible extension of our results, but refer to the arguments in \cite{RZZ16, RZZ18}.
\end{rmk}

\begin{rmk}
\label{rmk:D1}
Arguing as in \cite{MS}, it would have been possible to generalise the result (at least) to some finitely piecewise affine boundary data supported on piecewise polygonal domains 
or extend the problem to higher dimensions. We refer to \cite{FDP2} for a rigidity result for the two-well problem in three dimensions with piecewise affine boundary conditions.
\end{rmk}

Let us comment on some of the main new ingredients leading to the construction of convex integration solutions with higher Sobolev regularity in the geometrically non-linear two-well problem:
\begin{itemize}
\item \emph{Matrix space geometry and $K^{qc}$.}
In order to construct convex integration solutions to \eqref{eq:diff_inclusion}--\eqref{eq:matrices_intro} which enjoy higher Sobolev regularity a central step is the construction of suitable ``orthogonal'' coordinates for $K^{qc}$. These allow us to move along a rank-one line in $K^{qc}$ when varying one of the coordinates. In order to avoid dealing with rotations, we investigate this in the Cauchy-Green space $\mathcal{G}$ associated with $K^{qc}$, that is the set of symmetric positive definite matrices $\mt F^T\mt F$ for $\mt F\in K^{qc}$. This further has the advantage that the projection of $K^{qc}$ onto Cauchy-Green space is (a subset of) a two-dimensional manifold which is well-understood. However, some attention must be paid to relate objects in $K^{qc}$ to objects in its Cauchy-Green space. In particular, in order to exploit our self-improving replacement construction (see Lemma \ref{lem:Conti}), we rely on Lipschitz bounds in order to pass from relevant quantities in matrix space to their projections in Cauchy-Green space (see Lemmas \ref{lemma2}, \ref{lemma3}). For our convex integration algorithm (see Section \ref{sec:Algorithm}), it is important that all dependences are explicit in order to achieve the higher Sobolev regularity of $\nabla\vc u$. Thus, a good understanding of the geometry of $K^{qc}$ is crucial (see Section \ref{sec:matrix_space_geo} below).
\item \emph{A quantitative replacement construction.}
 Another important ingredient is the replacement construction (see Lemma \ref{lem:Conti} below) which we use at every iteration step of the convex integration algorithm to improve on the present gradient distribution and thus to move $\nabla\vc u$ closer to $K$ in some sub-region of our domain. Here, we rely on an explicit construction from \cite{C} which we complement with explicit quantitative bounds, necessary to prove the estimates of higher Sobolev regularity of $\nabla\vc u$ following the algorithm in \cite{RZZ18}. 
\end{itemize}
With the these two ingredients in hand, it is possible to embed the geometrically non-linear two-well problem into the framework of \cite{RZZ18} and to thus deduce the existence of convex integration solutions with higher Sobolev regularity for the geometrically non-linear two-well problem for the first time. As in \cite{RZZ18} we here do not optimize the constants in our construction but rather focus on the main qualitative dependences, leading to good uniform dependences on the Sobolev exponents (see the discussion at the end of Section \ref{sec:lit}). As a consequence,  our regularity threshold $\theta_0>0$ can be chosen uniformly for affine boundary data in $K^{lc}$ (with norms that deteriorate for boundary data close to the boundary of $K^{lc}$).

\begin{rmk}
We remark that in contrast to the construction from \cite{RZZ16} we here rely on a convex integration scheme using a \emph{countably infinite} sequence of replacements (see the discussion in \cite{RTZ19} comparing a countably infinite with a finite scheme for the geometrically linearised hexagonal-to-rhombic transformation). An interesting alternative matrix space construction is given in  \cite{BJ92} which could constitute the core of a convex integration construction for which at almost every point in matrix space only \emph{finitely} many replacement steps are considered (with the number of replacements however depending on the point under consideration). In order to exploit this matrix space construction in a convex integration algorithm, one would however need to make use of a special replacement construction (cf. Lemma \ref{lem:Conti} below) at which a fixed endpoint is deformed ``non-perturbatively'' (see \cite{RZZ16} for such a construction in the case of the geometrically linearised hexagonal-to-rhombic phase transformation). As we are not aware of such a replacement construction which also allows one to preserve the determinant, we do not pursue this further here. For special settings in which highly symmetric replacement constructions exist, we refer to \cite{CDPRZZ19, CKZ17}.
\end{rmk}

Relying on an idea by Ball and James \cite{BJ92}, we complement our result of Theorem \ref{thm:main} by also showing that for the two-well problem with only \emph{one} rank-one connection no convex integration solutions exist for affine boundary conditions (see Proposition  \ref{lem:laminates}).

\subsection{Relation with the literature on the two-well problem}
\label{sec:lit}
Compared to other phase transformations which allow for the presence of convex integration solutions, the geometrically non-linear two-well problem is relatively well-studied (although still important questions -- in particular in the context of convex integration and scaling limits -- are open) which makes it an attractive model problem. In the context of our problem the most relevant known results are:
\begin{itemize}
\item \emph{Dichotomy rigidity-flexibility.} Due to the work \cite{MS, DM1, DaM12, DM96} it is known that in the limiting cases of $BV$ and $L^{\infty}$ regularity the differential inclusion \eqref{eq:diff_inclusion} with $K$ as in \eqref{defK_intro}, \eqref{eq:matrices_intro} is rigid for $BV$ regular solutions while it is highly flexible for merely $L^{\infty}$ regular solutions.
\item \emph{Convex hulls.} The convex hulls of the set $K$ are known and explicit (see \cite{Sverak,P}) which is crucial for our detailed matrix space analysis and the construction of ``orthogonal'' coordinates.
\item \emph{Scaling.} The scaling behaviour for minimisers of energies involving elastic and surface contributions is determined for $\mt M = \mt {Id}$ in \cite{CC14, ContiChan14a}. In particular, the arguments of \cite{CC14} persist if $\mt M \in \partial K^{lc}$ showing analogous scaling results for these boundary conditions.
While there are no convex integration solutions in this setting (see \cite{BJ92,P}), these scaling results do constitute a very important step into the further analysis of the scaling behaviour for the two-well problem (and suggest that the scaling and, associated to that, the convex integration regularity thresholds for boundary data $\mt M \in \inte(K^{lc})$ are probably not better than the ones for $\mt M \in \partial (K^{lc})$).

In \cite{Lorent06} the scaling of a continuum two-well model is linked to a corresponding discrete model showing that optimal bounds in one of these lead to optimal bounds in the other one.
\item \emph{Rigidity results.} Various rigidity type results (in the presence of small surface energies or in the setting of only one rank-one connection) are known \cite{JerrardLorent2013, ContiSchweizer06, ContiSchweizer06a, CC10, DavoliFriedrich18, KLLR19}.
\end{itemize}

In view of the results of \cite{RTZ19} and \cite{CC14}, in particular further scaling results would be of considerable interest as they could provide important hints at the optimal regularity of convex integration solutions and thus at regularity as a selection mechanism for minimizers of \eqref{eq:energy_min} in the presence of convex integration solutions.
However, in order to exploit this connection, it would be necessary to study the scaling behaviour functionals involving bulk and surface energies subject to affine boundary conditions such that $\mt M \in \inte(K^{lc})$ which we do not address here.
In this framework, our Theorem \ref{thm:main} provides an important first step towards understanding the dichotomy rigidity-flexibility for the two-well problem, and enlarges the function space where exact solutions to \eqref{eq:diff_inclusion} exist.

\subsection{Outline of the article}
The remainder of the article is organized as follows: In Section \ref{sec:matrix_space_geo} we discuss the matrix space geometry of the two-well problem and introduce our main coordinates (which we define in Cauchy-Green space). Based on this, in Sections \ref{sec:in_approx}-\ref{sec:Algorithm}, we combine the two-well matrix space geometry with the convex integration framework from \cite{RZZ18}. In Section \ref{sec:sing_set} we recall the connection between regularity estimates and (partial) upper and lower bounds for a suitably defined notion of dimension of the singular set of the deformation/ the underlying characteristic functions.
Relying on ideas from \cite{BJ92}, in Section \ref{sec:one_rank_one}, we show that in the two-well problem with only one rank-one connection no convex integration solutions exist.
Finally, for the convenience of the reader in the appendix, we collect the notation that we are using throughout the article.

\section{Matrix Space Geometry}
\label{sec:matrix_space_geo}

In this section we study the matrix space geometry of the two-well problem. As central observations we obtain a system of two rank-one connections which we will crucially exploit in our convex integration algorithm (see Lemmas \ref{lemma2}, \ref{lemma3}).

Consider the set 
\beq
\label{defK}
K = SO(2)\mt F_0\cup SO(2)\mt F_0^{-1}
\eeq
where $\mt F_0,\mt F_0^{-1}\in\R^{2\times2}$ are respectively given by
\begin{align}
\label{eq:matrices}
{\mt F}_0 = \left[\begin{array}{ ccc } 1 & \delta \\ 0 & 1  \end{array}\right],\qquad
{\mt F}_0^{-1} = \left[\begin{array}{ ccc } 1 & -\delta \\ 0 & 1  \end{array}\right],
\end{align}
and $\delta>0.$ We remark that, as shown in \cite[Sec. 5]{BJ92}, given two wells with two rank-one connections, one can always reduce the problem to our case via an affine change of variables. 
Moreover, we note that the two wells in \eqref{eq:matrices} can be transformed into the wells
\begin{align}
\label{eq:trafo}
\tilde{{\mt F}}_0 = \left[\begin{array}{ ccc } 1 & 0 \\ \bar{\delta} & 1  \end{array} \right] = \mt D \mt Q \mt F_0, \ 
\tilde{{\mt F}}_0^{-1} = \left[\begin{array}{ ccc } 1 & 0 \\ -\bar{\delta} & 1  \end{array}\right] = \mt D \mt Q \mt R \mt F_0^{-1},
\end{align}
where $\bar{\delta} = \frac{\delta}{1+\delta^2}$ and 
\begin{align*}
\mt Q = \left[\begin{array}{ ccc } 1 & -\delta \\\delta & 1  \end{array}\right], \ 
\mt D = \left[\begin{array}{ ccc } 1 & 0 \\ 0 & \frac{1}{1+\delta^2} \end{array}\right], \ 
\mt R = \frac{1}{1+\delta^2} \left[\begin{array}{ ccc } 1-\delta^2 & 2\delta \\ -2\delta & 1-\delta^2 \end{array}\right].
\end{align*}

\begin{rmk}
\label{SeparationProperty}
We notice that
$$
\dist^2(SO(2)\mt F_0,SO(2)\mt F_0^{-1}) = \min_{\mt R\in SO(2)}|\mt R \mt F_0 - \mt F_0^{-1}|^2 = 4+2\delta^2 -2\sqrt{\delta^4+4} >0
$$
for each $\delta>0.$ The wells are thus separated.
\end{rmk}

We are now interested in the set $K^{qc}$, where for any compact set $H$, $H^{qc}$ is given by
\beq
\label{def qc}
H^{qc} = \bigl\{\mt F\in\R^{2\times2}: f(\mt F)\leq \sup_{ H} f,\quad\text{for each $f\colon\R^{2\times 2}\to\R$ quasiconvex}	\bigr\},
\eeq
that is the set of constant macroscopic deformation gradients which can be obtained by finely mixing the martensitic variants $\mt F_0,\mt F_0^{-1}.$ We define $H^{lc}$ as $H^{lc} := \bigcup_{i=0}^\infty H^{(i)}$, where the sets $H^{(i)}$ are defined recursively by $H^{(0)} = H$ and
\begin{align*}
H^{(i)}&:= \left\{\mt M\in\R^{2\times 2}: \mt M = \lambda\mt A+(1-\lambda)\mt B,\quad\text{for any $\mt A,\mt B\in H^{(i-1)}$}, \right. \\
& \quad \quad \left. \text{$\rank(\mt A-\mt B)=1$, and $\lambda\in[0,1]$ }	\right\},
\end{align*}
for any $i\geq 1$, for any compact set $H\subset\R^{2\times 2}.$ When $K$ is defined as in \eqref{defK}, we have that \cite{BJ92,Sverak} 
\begin{align}
\label{eq:hulls}
K^{qc} = K^{lc} = K^c\cap \bigl\{\mt F\in\R^{2\times2}: \det \mt F = 1\bigr\},
\end{align}
where $K^c$ is the convex hull of $K$.

\subsection{Coordinates in Cauchy-Green space}
The following result from \cite{BJ92,P} gives more precise information on $K^{qc}$ (see also Figure \ref{fig:fig2}):

\begin{prop}[\cite{P}, Chapter 5.3]
\label{prop:Kqc}
The set of macroscopically attainable boundary conditions $\mt F\in \R^{2\times 2}$ for $K$ as in \eqref{defK} is characterized by the conditions
\begin{align*}
\mt C(\mt F) \in \mathcal{G}:=\{\mt C\in \R^{2\times 2}_{sym}: \ 
c_{11} \leq 1, \ c_{22} \leq (1+\delta^2) ,\ c_{11} c_{22} \geq 1, \ \det(\mt C)=1\}.
\end{align*}
Here $ \mt C(\mt F) =\mt F^T \mt F$ is the Cauchy-Green tensor associated with the macroscopic deformation $\mt F$ and $c_{ij}$ denote its components.
\end{prop}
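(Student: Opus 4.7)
My plan is to prove the two inclusions of the characterization, working entirely in Cauchy--Green space and exploiting the identity $K^{qc}=K^{lc}=K^{c}\cap\{\det\mt F=1\}$ from \eqref{eq:hulls}. Passing to the Cauchy--Green tensor $\mt C(\mt F)=\mt F^T\mt F$ automatically quotients out the left $SO(2)$ action, so that one only has to describe a two-dimensional region on the variety of positive symmetric matrices with unit determinant.

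For the necessity direction I would note that the maps $\mt F\mapsto |\mt F e_i|^{2}=c_{ii}$ are quadratic forms in the entries of $\mt F$, hence convex, and satisfy
$$
|\mt R \mt F_0 e_1|^{2}=|\mt R \mt F_0^{-1} e_1|^{2}=1,\qquad |\mt R \mt F_0 e_2|^{2}=|\mt R \mt F_0^{-1} e_2|^{2}=1+\delta^{2}
$$
for every $\mt R\in SO(2)$. Writing $\mt F\in K^{c}$ as a convex combination of elements of $K$ and applying convexity then yields $c_{11}(\mt F)\leq 1$ and $c_{22}(\mt F)\leq 1+\delta^{2}$. The remaining condition $c_{11}c_{22}\geq 1$ is automatic: the translation of $\det\mt F=1$ to Cauchy--Green space reads $\det\mt C=c_{11}c_{22}-c_{12}^{2}=1$, so $c_{11}c_{22}=1+c_{12}^{2}\geq 1$.

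The sufficiency direction is the substantive part and follows the construction in \cite[Chapter~5.3]{P}. Given $\mt C\in\mathcal G$, the goal is to construct a second-order laminate supported in $K$ whose barycentre $\mt F$ satisfies $\mt F^T\mt F=\mt C$, thereby placing $\mt F\in K^{lc}$. The two ingredient rank-one connections between the wells are the obvious one $\mt F_0-\mt F_0^{-1}$ (which is rank-one because only the upper-right entry is nonzero) and a second one $\mt F_0-\mt R_{*}\mt F_0^{-1}$ with an explicit rotation $\mt R_{*}$ that may be read off from \eqref{eq:trafo}. Averaging along each of these two rank-one directions produces a one-parameter family of first-order laminates; in Cauchy--Green coordinates these trace out exactly the two distinguished boundary arcs $\{c_{11}=1,\,\det\mt C=1\}$ and $\{c_{22}=1+\delta^{2},\,\det\mt C=1\}$ of $\mathcal G$. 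An outer rank-one combination between two such first-order laminates, one on each arc, then sweeps out the two-dimensional interior of $\mathcal G$.

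The main obstacle is the book-keeping for this outer lamination: one has to exhibit, for every interior point $\mt C\in\mathcal G$, admissible lamination parameters $\lambda\in[0,1]$ together with endpoints on the two distinguished arcs whose rank-one average has Cauchy--Green tensor exactly $\mt C$. Parameterising $\mathcal G$ by $(c_{11},c_{12})$ and computing the Cauchy--Green tensor of the outer laminate as an explicit (quadratic) function of the free parameters, this reduces to an algebraic identity which is straightforward to verify directly.
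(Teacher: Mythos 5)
The paper only sketches a proof of this proposition (citing Pedregal's Chapter 5.3): necessity by testing with quasiconvex functions and using the weak continuity of the determinant, sufficiency by an explicit double-laminate construction. Your proposal correctly fills in exactly that sketch. Testing with the convex (hence quasiconvex) functions $\mt F \mapsto |\mt F \vc e_i|^2$ and with $\pm\det$ is precisely the necessity argument alluded to, and the reduction of $c_{11}c_{22}\geq 1$ to $\det \mt C = 1$ is a correct observation that this constraint is not independent. Your sufficiency plan, namely first-order laminates tracing the two straight boundary arcs $\{c_{11}=1\}$ and $\{c_{22}=1+\delta^2\}$ followed by an outer rank-one combination, is the intended double-laminate; your outer step joins a laminate on each arc, whereas the paper's own parametrization $\mt F_i(\mu,\lambda)$ in Lemma~\ref{lemma1} instead joins the pair $\mt A_i(\lambda)$, $\mt A_i(1-\lambda)$ lying on the same arc, but these are interchangeable parametrizations of the same family of second-order laminates, and the paper itself uses Proposition~\ref{prop:Kqc} to confirm that its parametrization covers $\mathcal{G}$. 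The only place where you wave your hands — that the algebra showing surjectivity of the outer laminate onto $\inte\mathcal{G}$ is ``straightforward'' — is a real (though routine) computation that should be carried out, and for the paper's coordinates is encoded in the explicit formulas~\eqref{C1Mat},~\eqref{C2Mat}.
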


We remark that by the determinant constraint $c_{12}^2= 1-c_{11}c_{22}$.

This result relies on two arguments: The necessary condition is determined by a discussion of the necessary condition for gradient Young measures which are supported in $K$. This in particular uses the fact that the determinant is weakly continuous.

The second step yielding sufficiency and thus attainability relies on an explicit construction of a suitable double laminate.

\begin{figure}[t]
  \centering
  \includegraphics[width=0.7\linewidth,page=7]{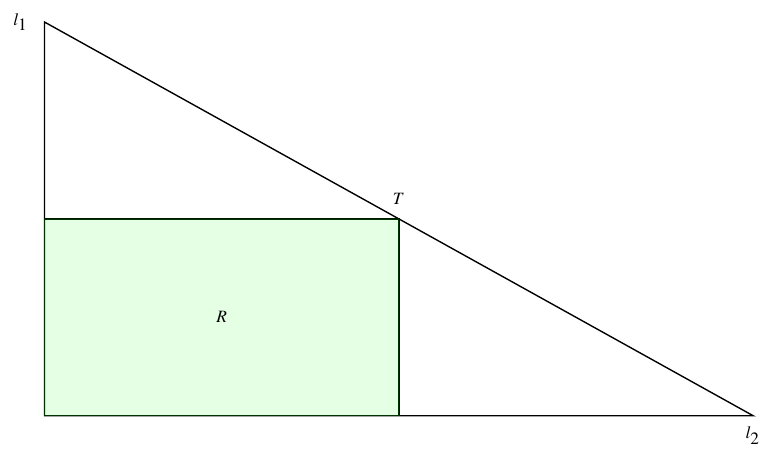}
\caption{Plotting $\mathcal G$ in case $\delta=0.5$}
\label{fig:fig2}
\end{figure}

Below we denote by $\inter K^{qc}$ the relative interior of $K^{qc}$ in the manifold $\Sigma:=\bigl\{\mt F\in\R^{2\times2}: \det \mt F = 1\bigr\}$. We notice that the two wells $SO(2)\mt F_0$, $SO(2)\mt F_0^{-1}$ are rank-one connected in the following way:
\beq
\label{Rk10}
\mt F_0^{-1} = \mt F_0 - 2\delta \vc e_1\otimes \vc e_2, \qquad 
\mt R \mt F_0^{-1} = \mt F_0 - \frac{2\delta}{1+\delta^2}(\delta \vc e_1 + \vc e_2)\otimes \vc e_1 ,
\eeq
where $\mt R= \frac{1}{1+\delta^2}\left[\begin{array}{ ccc } 1-\delta^2 & 2\delta \\ -2\delta & 1-\delta^2  \end{array}\right]\in SO(2)$.
We remark that the second rank-one connection corresponds to the one coming from $\tilde{\mt F}_0^{-1} = \tilde{\mt F}_0 - 2\bar{\delta} \vc e_2 \otimes \vc e_1  $ (in this sense the two rank-one connections are ``symmetric'').
Let us hence define
$$
\mt A_1(\lambda) := \mt F_0 - 2\lambda\delta \vc e_1\otimes \vc e_2,\qquad \mt A_2(\lambda) :=\mt F_0 - 2 \lambda \delta\vc v\otimes \vc e_1,\qquad\vc v:=\frac1{1+\delta^2}(\delta \vc e_1 + \vc e_2).
$$
We have the following lemma:

\begin{lem}
\label{lemma1}
For any $\lambda\in[0,1]$ there exist $\mt Q_1,\mt Q_2\in SO(2)$, $\vc u_1,\vc u_2,\vc w_1,\vc w_2\in\R^2$ such that 
\beq
\label{R1lam}
\mt Q_i \mt A_i(1-\lambda) = \mt A_i(\lambda) + \vc w_i\otimes\vc u_i,\qquad i=1,2.
\eeq
Furthermore, 
\begin{align}
\label{1stR1Conn}
\vc w_1 &= \gamma\bigl(\delta(1-2\lambda)\vc e_1 + \vc e_2 \bigr), \qquad &&\vc u_1 = \vc e_1 , \\
\label{2ndR1Conn}
\vc w_2 &= \gamma\bigl(((1-2\lambda)\delta^2 +1)\vc e_1 - 2\lambda \delta \vc e_2 \bigr), \qquad &&\vc u_2 = \vc e_2,
\end{align}
with
\beq
\label{DefGamma}
\gamma:= 	\frac{2 \delta (2\lambda -1)}{1+ \delta^2 (1-2\lambda )^2}
\eeq
and
\begin{align}
\begin{split}
\mt Q_1 &=  \frac{1}{1+\delta^2(1-2\lambda)^2}\left[\begin{array}{ ccc } 1-\delta^2(1-2\lambda)^2 & 2\delta (1-2\lambda) \\ -2\delta (1-2\lambda)  & 1-\delta^2(1-2\lambda)^2\end{array}\right], \\
\mt Q_2 & = \frac{1}{1+\delta^2(1-2\lambda)^2}\left[\begin{array}{ ccc } 1-\delta^2(1-2\lambda)^2 & -2\delta (1-2\lambda) \\ 2\delta (1-2\lambda)  & 1-\delta^2(1-2\lambda)^2\end{array}\right] .
\end{split}
\end{align}
\end{lem}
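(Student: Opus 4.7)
My plan is to verify both identities in \eqref{R1lam} by direct matrix computation, exploiting the shorthand $\mu := \delta(1-2\lambda)$ to keep expressions compact. In this notation one has $\gamma = -2\mu/(1+\mu^2)$, and the Pythagorean-type identity $(1-\mu^2)^2 + (2\mu)^2 = (1+\mu^2)^2$ immediately shows that $\mt Q_1, \mt Q_2 \in SO(2)$.

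For $i=1$, the two matrices simplify to $\mt A_1(\lambda) = \begin{bmatrix} 1 & \mu \\ 0 & 1 \end{bmatrix}$ and $\mt A_1(1-\lambda) = \begin{bmatrix} 1 & -\mu \\ 0 & 1 \end{bmatrix}$. A direct multiplication shows that the second column of $\mt Q_1 \mt A_1(1-\lambda)$ equals $(\mu, 1)^T$, i.e.\ it coincides with the second column of $\mt A_1(\lambda)$; the underlying cancellation encodes the trivial rank-one connection $\mt F_0 - \mt F_0^{-1} = 2\delta\, \vc e_1 \otimes \vc e_2$ already recorded in \eqref{Rk10}. Subtracting $\mt A_1(\lambda)$ therefore leaves only the first column, which simplifies to $\gamma(\mu \vc e_1 + \vc e_2)$, matching $\vc w_1 \otimes \vc u_1$ with $\vc u_1 = \vc e_1$ as claimed in \eqref{1stR1Conn}, \eqref{DefGamma}.

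For $i=2$, I would proceed analogously, multiplying $\mt Q_2 \mt A_2(1-\lambda)$ out column by column. Using the elementary identities $(1-2\lambda)\delta^2 = \mu\delta$ and $2(1-\lambda)\delta = \mu + \delta$, the first column of the product collapses to that of $\mt A_2(\lambda)$; this encodes the second rank-one connection of \eqref{Rk10}, namely $\mt A_2(\lambda) - \mt A_2(1-\lambda) = -2(2\lambda - 1)\delta\, \vc v \otimes \vc e_1$. The remaining second column of the difference then simplifies to $\gamma\bigl((1+(1-2\lambda)\delta^2)\vc e_1 - 2\lambda\delta\, \vc e_2\bigr)$, matching $\vc w_2 \otimes \vc e_2$ as in \eqref{2ndR1Conn}. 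The main obstacle is purely algebraic bookkeeping in the $i=2$ case, since $\vc v$ is not axis-aligned and the cancellations now involve $1+\delta^2$, $1+\mu^2$, and $\mu$ simultaneously; consistent use of the substitution $\mu = \delta(1-2\lambda)$ keeps all denominators tractable and exposes the parallel to the $i=1$ case.
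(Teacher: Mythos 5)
Your proposal is correct, and for $i=2$ it takes a genuinely more elementary route than the paper. For $i=1$ the two arguments rest on the same algebra: you multiply out $\mt Q_1\mt A_1(1-\lambda)$ directly, whereas the paper observes that $\mt A_1(\lambda)$, $\mt A_1(1-\lambda)$ are just $\mt F_0$, $\mt F_0^{-1}$ with $\delta$ replaced by $\tilde\delta=(1-2\lambda)\delta$ and then quotes the second rank-one formula of \eqref{Rk10}; these are essentially equivalent. For $i=2$, however, you simply substitute the stated $\mt Q_2$, $\vc w_2$, $\vc u_2$ into $\mt Q_2\mt A_2(1-\lambda)-\mt A_2(\lambda)$ and simplify, whereas the paper \emph{derives} these quantities: it first shows that $\vc u_2$ must be parallel to $\vc e_1$ or $\vc e_2$ (from the requirement that some $\vc u^\perp\perp\vc u$ satisfy $|\mt A_2(\lambda)\vc u^\perp|=|\mt A_2(1-\lambda)\vc u^\perp|$), then pins down the direction of $\vc w_2$ from the determinant identity $\mt A_2(\lambda)^{-1}\vc w_2\cdot\vc u_2=0$, and finally obtains the scalar by matching Cauchy--Green tensors and recovering $\mt Q_2$ via polar decomposition. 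Your verification is shorter and self-contained but presupposes the explicit formulas; the paper's derivation explains how one finds them. One caution on your interpretive remark in the $i=2$ case: the first-column cancellation in $\mt Q_2\mt A_2(1-\lambda)$ is \emph{not} the identity $\mt A_2(\lambda)-\mt A_2(1-\lambda)=-2(2\lambda-1)\delta\,\vc v\otimes\vc e_1$; that identity is the unrotated ($\mt Q=\mt{Id}$, $\vc u=\vc e_1$) rank-one connection which the paper explicitly sets aside as the trivial case, whereas the cancellation you need also involves $\mt Q_2\neq\mt{Id}$. This does not affect the validity of the proof, since the algebra you sketch (working in $\mu=\delta(1-2\lambda)$ and using $2(1-\lambda)\delta=\mu+\delta$, $2\lambda\delta=\delta-\mu$) does produce the stated $\vc w_2$ with the stated $\gamma=-2\mu/(1+\mu^2)$ on direct computation.
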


\begin{proof}
We first give the argument for $i=1$: In this case we note that the matrices $\mt A_1(\lambda)$ and $\mt A_1(1-\lambda)$ are of the same form as $\mt F_0$, $\mt F_0^{-1}$, where however $\delta$ is replaced by $\tilde{\delta}=(1-2\lambda)\delta$. As a consequence, we have the same rank-one connection as in the second identity in \eqref{Rk10} now with $\delta$ replaced by $\tilde{\delta}$, i.e.,
\begin{align}
\label{eq:case1}
\mt R_1 \mt A_1(1-\lambda) = \mt A_1(\lambda) + \frac{2\tilde{\delta}}{ 1 + \tilde{\delta}^2} (\tilde{\delta} \vc e_1 + \vc e_2) \otimes \vc e_1.
\end{align}
Here $\mt R_1 = \mt R_1(\tilde{\delta}) = \frac{1}{1+\tilde{\delta}^2}\left[\begin{array}{ ccc } 1-\tilde{\delta}^2 & 2\tilde{\delta} \\ -2\tilde{\delta} & 1-\tilde{\delta}^2  \end{array}\right]$. Rewriting \eqref{eq:case1} in terms of $\delta$ implies the claim.

For $i=2$ we make use of a different approach. We argue by first deducing necessary conditions for the presence of a rank-one connection and then show that these are also sufficient. Let us first notice that, if there exists $\mt Q\in SO(2), \vc u,\vc w\in\R^2$ such that \eqref{R1lam} is satisfied, then there exist $\vc u^\perp \perp \vc u$ such that $|\mt A_2(\lambda)\vc u^\perp|=|\mt A_2(1-\lambda)\vc u^\perp|$. Some computations imply that $\vc u^\perp$ is either parallel to $\vc e_1$ or to $\vc e_2$. We can hence choose $\vc u_2 $ either equal to $\vc e_1$ or to $\vc e_2$. The first case is trivial, and gives $\mt Q_2=\mt 1$. We hence focus on the case $\vc u_2 = \vc e_2.$ Since $\det \mt A_2(\lambda)=\det \mt A_2(1-\lambda)=1$, by taking the determinant of \eqref{R1lam} we have
$$
1 = \det \bigl(\mt Q_2 \mt A_2(1-\lambda)\bigr) = \det \bigl(\mt A_2(\lambda) + \vc w_2 \otimes\vc u_2 \bigr) = \det \bigl(\mt A_2(\lambda)\bigr) \bigl(1 +\mt A_2^{-1}(\lambda)\vc w_2\cdot \vc u_2 \bigr),
$$
and therefore, $ \mt A_2(\lambda)^{-1}\vc w_2\cdot \vc u_2=0.$ As a consequence $\vc w_2$ must be as in \eqref{2ndR1Conn}. As a consequence, if we can find $\gamma_2$ such that
\beq
\label{equazGT}
\mt A_2^T(1-\lambda)\mt A_2(1-\lambda) = (\mt A_2(\lambda) + \gamma\vc w_2\otimes \vc u_2)^T(\mt A_2(\lambda) + \gamma\vc w_2\otimes \vc u_2),
\eeq
polar decomposition gives \eqref{R1lam}. 
A calculation shows that this is indeed solvable with $\gamma$ as claimed.
\end{proof}

Let us now define
\begin{align}
\label{eq:F_i}
\begin{split}
\mt F_i (\mu,\lambda)&:= \mt A_i(\lambda) + \mu  \vc w_i(\lambda)\otimes \vc u_i	, \\ 
\mt C_i (\mu,\lambda)&:= \bigl(\mt A_i(\lambda) + \mu  \vc w_i(\lambda)\otimes \vc u_i \bigr)^T(\mt A_i(\lambda) + \mu \vc w_i(\lambda)\otimes \vc u_i ),
\end{split}
\end{align}
for $i=1,2$ (see Figure \ref{fig:fig3}).
Spelling this out, we have
\begin{align}
\label{C1Mat}
&{\mt C}_1(\mu,\lambda) = \left[\begin{array}{ ccc } 1 + \frac{4\delta^2 (2\lambda - 1)^2 (\mu^2 - \mu)}{\delta^2(2\lambda - 1)^2 + 1} & \delta(1 -2\lambda)( 1-2\mu) \\ \delta(1 -2\lambda)(1 - 2\mu 
) & 1 + (\delta - 2\delta\lambda)^2   \end{array}\right],
\\
\label{C2Mat}
&{\mt C}_2(\mu,\lambda) = \left[\begin{array}{ ccc } 1 +\frac{4\delta^2(\lambda^2 - \lambda)}{\delta^2 + 1} & \delta(1-2 \lambda) (1-2\mu ) \\ \delta(1-2 \lambda) (1-2\mu ) & 1+\delta^2+\frac{4\delta^2(\delta^2 + 1)(2\lambda - 1)^2(\mu^2 - \mu)}{\delta^2(2\lambda - 1)^2 + 1}  \end{array}\right].
\end{align}

\begin{rmk}[Symmetries]
\label{rmk:symm}
We remark that the expressions for $\mt C_1(\mu,\lambda)$, $\mt C_2(\mu, \lambda)$ enjoy good symmetries with respect to switching $\mu \mapsto 1-\mu$: For $j\in \{1,2\}$
\begin{align*}
&\mt C_{j}(\mu,\lambda) \vc e_j \cdot \vc e_j = \mt C_{j}(1-\mu, \lambda) \vc e_j \cdot \vc e_j.
\end{align*}

For fixed $j\in \{1,2\}$ we think of $\mt C_{j}(\mu,\lambda)$ and in particular of $\mu, \lambda$ as coordinates on the manifold $\mathcal{G}$. Each coordinate representation $\mt C_j(\mu, \lambda)$ (or $\mt F_j(\mu, \lambda)$) is particularly well-adapted to one of the rank-one connections from Lemma \ref{lemma1}.  Further, as we shall see in Lemma \ref{lemma2} and in Lemma \ref{lemma3} (see also Figure \ref{fig:inapprox} below) these coordinates are ``orthogonal'' in the Cauchy-Green space. Indeed, by varying $\mu$ in $\mt C_1$ (resp. $\mt C_2$) we can change $c_{11}$ (resp. $c_{22}$) by keeping $c_{22}$ (resp. $c_{11}$) constant.
\end{rmk}

Using the representations \eqref{C1Mat}, \eqref{C2Mat} and Proposition \ref{prop:Kqc}, it is immediate to verify that
\begin{align*}
\mathcal{G} 
&= \bigl\{\mt C \in\R^{2\times2}_{Sym^+}: \mt C = \mt C_1(\mu,\lambda),\, \mu,\lambda\in[0,1] 	\bigr\}\\
& = \bigl\{\mt C \in\R^{2\times2}_{Sym^+}: \mt C = \mt C_2(\mu,\lambda),\, \mu,\lambda\in[0,1] 	\bigr\}.
\end{align*}
As a consequence, for any $\mt F\in K^{qc}$ there exists $\mt R_1,\mt R_2\in SO(2)$ and $(\mu_1,\lambda_1),(\mu_2,\lambda_2)\in[0,1]^2$ such that 
$$
\mt F = \mt R_1 \mt F_1(\mu_1,\lambda_1) = \mt R_2 \mt F_2(\mu_2,\lambda_2).
$$
Thanks to Proposition \ref{prop:Kqc} and the explicit form of the Cauchy-Green tensors \eqref{C1Mat}, \eqref{C2Mat}, $\mt F\in \inter K^{qc}$ if and only if $(\mu_1,\lambda_1),(\mu_2,\lambda_2)\in(0,1)^2$, $\lambda_i\neq \frac12.$ 

\begin{figure}
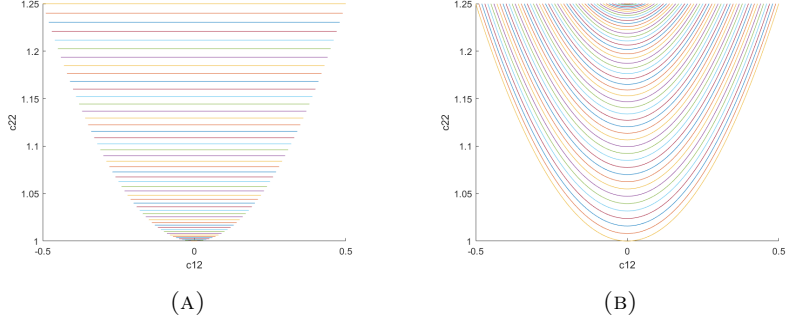

\begin{subfigure}{.45\textwidth}
  \centering
  \includegraphics[width=0.9\linewidth,page=5]{Figures.pdf}
  \caption{}
  \label{fig99}
\end{subfigure}%
\begin{subfigure}{.45\textwidth}
  \centering
  \includegraphics[width=0.9\linewidth,page=6]{Figures.pdf}
    \caption{}
  \label{fig1}
\end{subfigure}%
\caption{Plotting $c_{22}$ versus $c_{12}$ in case $\delta=0.5$ when $\mu$ varies in $[0,1]$. On the left for $i=1$, on the right for $i=2.$}
\label{fig:fig3}
\end{figure}

\begin{lem}
\label{lemma2}
Let $\mt F = \mt R\mt F_1(\mu,\lambda) \in \inte( \mathcal{G})$ for some $(\mu,\lambda)\in(0,1)^2,$ $\lambda\neq \frac12$ and some $\mt R\in SO(2)$. Let further $\eps_0:= 1 - \mt C_1(\mu,\lambda)\vc e_1\cdot\vc e_1>0$. Then, for any $\eps\in(0,\eps_0]$, there exists $\mu^*,\rho\in[0,1]$ such that 
\begin{align}
\label{eq:decompose}
\begin{split}
&\mt F = \rho\mt R\mt F_1(\mu^*,\lambda)+(1-\rho) \mt R\mt F_1(1-\mu^*,\lambda),\\ 
& \rank(\mt F_1(\mu^{\ast},\lambda)- \mt F_1(1-\mu^{\ast},\lambda))=1,\\
&\mt C_1(\mu^*,\lambda)\vc e_1\cdot\vc e_1 = \mt C_1(1-\mu^*,\lambda)\vc e_1\cdot\vc e_1 = 1-\eps.
\end{split}
\end{align}
In particular, $\mt C_1(\mu^{\ast},\lambda) \vc e_2 \cdot \vc e_2 = \mt C_1(\mu,\lambda) \vc e_2 \cdot \vc e_2$.

Furthermore, if $\lambda(1-\lambda) \in (0,\frac{3}{16})$ and $\eps_0 \in (0, 2^{-4} \min\{ \delta^2,1\})$, there exists a constant $C_1=C_1(\delta)\geq 1$ such that for $\chi = \sgn\bigl(\bigl(\frac12 -\mu\bigr)\bigl(\frac12 -\mu^*\bigr)\bigr)$
\begin{align}
\label{eq:control}
\left|\rho - \frac{1}{2}(\chi+1) \right| \leq C_1 \eps_0,
\end{align}
and
\begin{align}
\label{eq:distF}
\left|\mt F - \mt R \mt F_1\left(\mu^{\ast} + \frac{1}{2}(1-\chi)(1-2\mu^{\ast}),\lambda\right) \right| \leq C_1 \eps_0.
\end{align}
\end{lem}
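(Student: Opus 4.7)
The plan is to work directly from the explicit expression for $\mt C_1(\mu, \lambda)$ in \eqref{C1Mat}. The crucial observation is that $\mt C_1(\mu, \lambda) \vc e_2 \cdot \vc e_2 = 1 + \delta^2(1-2\lambda)^2$ does not depend on $\mu$, which immediately yields the ``orthogonality'' statement $\mt C_1(\mu^*, \lambda) \vc e_2 \cdot \vc e_2 = \mt C_1(\mu, \lambda) \vc e_2 \cdot \vc e_2$. Moreover, $\mt C_1(\mu, \lambda) \vc e_1 \cdot \vc e_1 = 1 + \frac{4\delta^2(2\lambda - 1)^2(\mu^2 - \mu)}{\delta^2(2\lambda - 1)^2 + 1}$ is manifestly symmetric under $\mu \mapsto 1 - \mu$, so the equation $\mt C_1(\mu^*,\lambda)\vc e_1 \cdot \vc e_1 = 1 - \eps$ is a quadratic in $\mu^*$ with two roots symmetric about $1/2$, which I denote $\mu^*$ and $1 - \mu^*$. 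Since $\eps \le \eps_0 = 1 - \mt C_1(\mu,\lambda)\vc e_1\cdot \vc e_1$, the original $\mu$ lies in $[\min\{\mu^*, 1-\mu^*\}, \max\{\mu^*, 1-\mu^*\}]$.

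Next, since $\mt F_1(\mu^*, \lambda) - \mt F_1(1-\mu^*, \lambda) = (2\mu^* - 1) \vc w_1(\lambda) \otimes \vc e_1$ has rank one, and $\mt R$ preserves rank, the two endpoints of the decomposition in \eqref{eq:decompose} are rank-one connected. Matching the affine combination of $\mt F_1(\mu^*, \lambda), \mt F_1(1-\mu^*, \lambda)$ in the $\mt A_1(\lambda) + (\,\cdot\,)\vc w_1 \otimes \vc e_1$ form uniquely determines $\rho = \frac{(1-\mu^*) - \mu}{(1-\mu^*) - \mu^*} \in [0,1]$, which proves the first part of the lemma.

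For the quantitative bounds \eqref{eq:control} and \eqref{eq:distF}, the key tool is the factorisation
\[
\mt C_1(\mu,\lambda)\vc e_1\cdot\vc e_1 - \mt C_1(\mu^*,\lambda)\vc e_1\cdot\vc e_1 = \frac{4\delta^2(2\lambda-1)^2}{\delta^2(2\lambda-1)^2 + 1}(\mu - \mu^*)(\mu + \mu^* - 1) = \eps_0 - \eps.
\]
The hypothesis $\lambda(1-\lambda) \in (0, 3/16)$ gives $(2\lambda - 1)^2 > 1/4$, and plugging this together with the smallness $\eps_0 \leq 2^{-4} \min\{\delta^2, 1\}$ into the formula $\eps_0 = \frac{4\delta^2(2\lambda-1)^2\,\mu(1-\mu)}{\delta^2(2\lambda-1)^2 + 1}$ yields an upper bound on $\mu(1-\mu)$, and hence a uniform lower bound $|2\mu - 1| \geq c(\delta) > 0$; the same argument applied with $\mu^*$ in place of $\mu$ (using $\eps \le \eps_0$) gives $|2\mu^* - 1| \geq c(\delta)$. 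Choosing $\mu^*$ on the same side of $1/2$ as $\mu$ (so $\chi = 1$), the identity $|\mu + \mu^* - 1| = |\tfrac{1}{2}-\mu| + |\tfrac{1}{2}-\mu^*| \gtrsim c(\delta)$ and the factorisation above then give $|\mu - \mu^*| \leq C(\delta)\,\eps_0$. Feeding this into $\rho = 1 - \frac{\mu - \mu^*}{1 - 2\mu^*}$ produces \eqref{eq:control}, and $\mt F - \mt R\mt F_1(\mu^*, \lambda) = (\mu - \mu^*)\,\mt R \vc w_1(\lambda) \otimes \vc e_1$, together with the bound $|\vc w_1| \leq C(\delta)$ which follows from \eqref{1stR1Conn}--\eqref{DefGamma}, gives \eqref{eq:distF}. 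The opposite-side case $\chi = -1$ is fully symmetric, obtained by replacing $\mu^*$ with $1 - \mu^*$ throughout and matching against the expression $\mu^* + \tfrac{1}{2}(1-\chi)(1-2\mu^*) = 1-\mu^*$ in \eqref{eq:distF}.

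The main technical hurdle lies in careful bookkeeping of how the constants depend on $\delta$, and in particular in verifying that the assumed smallness of $\eps_0$ and the restriction on $\lambda(1-\lambda)$ are strong enough to force $\mu$ and $\mu^*$ to stay uniformly away from $1/2$ — this is precisely what prevents the denominators $1 - 2\mu^*$ and $\mu + \mu^* - 1$ from degenerating and is therefore what makes both estimates linear in $\eps_0$.
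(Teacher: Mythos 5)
Your proposal is correct and follows essentially the same route as the paper: solve the quadratic $\mt C_1(\mu^*,\lambda)\vc e_1\cdot\vc e_1 = 1-\eps$ for $\mu^*$, match the affine combination to get $\rho = 1 - \frac{\mu-\mu^*}{1-2\mu^*}$, use $\lambda(1-\lambda) < 3/16$ to keep $(2\lambda-1)^2$ bounded below, and conclude that $\mu,\mu^*$ stay uniformly away from $1/2$ so that $|\mu-\mu^*|$ is controlled linearly by $\eps_0$. The one organizational difference is in how the final estimate is extracted: the paper writes out the explicit square-root formula for $\rho$ and Taylor-expands it around $\mu(\mu-1)=0$, while you instead observe the clean factorization $\bigl(\mt C_1(\mu,\lambda)-\mt C_1(\mu^*,\lambda)\bigr)\vc e_1\cdot\vc e_1 \propto (\mu-\mu^*)(\mu+\mu^*-1)$ and bound $|\mu-\mu^*|$ directly from the uniform lower bound on $|\mu+\mu^*-1|$. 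The two are algebraically equivalent; your factorization is arguably more transparent, and it makes \eqref{eq:distF} drop out immediately from $\mt F - \mt R\mt F_1(\mu^*,\lambda) = (\mu-\mu^*)\mt R\vc w_1\otimes\vc e_1$, which the paper leaves implicit. One small slip: since $\eps_0 = 1 - \mt C_1(\mu,\lambda)\vc e_1\cdot\vc e_1$ and $\eps = 1-\mt C_1(\mu^*,\lambda)\vc e_1\cdot\vc e_1$, your displayed factorization identity should equal $\eps-\eps_0$, not $\eps_0-\eps$; this is harmless since you only use $|\eps-\eps_0|\leq\eps_0$, but it is worth fixing.
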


\begin{rmk}
\label{rmk:Lip_dep}
We remark that the Lipschitz dependence of  \eqref{eq:control} and \eqref{eq:distF} in terms of $\eps_0$ plays a crucial role for our argument in the sequel. It will allow us to make use of ``self-improving'' error estimates (see also the remarks following Lemma \ref{lem:Conti} and the argument for Lemmas \ref{lem:diamond}, \ref{lem:A4}). It is this ``self-improvement'' which in the sequel allows us to obtain exponential instead of super-exponential $BV$ estimates.
\end{rmk}

\begin{proof}
We first prove that for $\beta_1 := \frac{\eps}{1 - \mt C_1(\mu,\lambda)\vc e_1\cdot\vc e_1},$ we have
\beq
\label{muStar}
\mu^* = \frac12\Bigl(1 \pm 	\sqrt{1 - 4\beta_1(\mu-\mu^2) }\Bigr),
\qquad 
\rho = \frac12\Bigl ( 1 +\chi \frac{\sqrt{1 - 4(\mu-\mu^2)}}{\sqrt{1 - 4\beta_1(\mu-\mu^2) } } 	\Bigr) ,
\eeq
where $\chi = \sgn \bigl(\bigl(\frac12 -\mu\bigr)\bigl(\frac12 -\mu^*\bigr)\bigr).$
To this end, we note that
\begin{align}
\label{eq:boundary}
\mt C_1(\mu,\lambda)\vc e_1\cdot\vc e_1 =1 + \frac{4\delta^2 (2\lambda - 1)^2 (\mu^2 - \mu)}{\delta^2(2\lambda - 1)^2 + 1} .
\end{align}
Solving the equation $1-\mt C_1(\mu^*,\lambda)\vc e_1\cdot\vc e_1 = \eps$, we obtain the two solutions 
$$
\mu^*_{\pm } = \frac12\Biggl(1 \pm 	\sqrt{1-\eps\frac{1+\delta^2(2\lambda-1)^2}{\delta^2(2\lambda-1)^2}}\Biggr),
$$ 
which can be rewritten as in \eqref{muStar}. Then, using the definitions \eqref{eq:F_i} and the expression for $\mu^{\ast}$, we solve the identity
\begin{align*}
\mt F &= \mt R \mt F_1(\mu, \lambda) = \rho\mt R\mt F_1(\mu^*,\lambda)+(1-\rho) \mt R\mt F_1(1-\mu^*,\lambda)
\end{align*}
for $\rho$. This leads to $\rho = 1+\frac{\mu - \mu^*}{2\mu^*-1}$ and hence the desired decomposition \eqref{eq:decompose} holds. The fact that $\mt C_1(\mu^{\ast},\lambda ) \vc e_2 \cdot \vc e_2= \mt C_1(\mu,\lambda) \vc e_2 \cdot \vc e_2 $ is a direct consequence of the form of the Cauchy-Green tensor in \eqref{C1Mat}. In order to infer the second identity in \eqref{muStar}, it just remains to prove that $\rho$ can be written as in \eqref{muStar}. But this is true as $\mu$ can be written as $$
\mu = \frac12\Biggl(1 \pm 	\sqrt{1-\eps_0\frac{1+\delta^2(2\lambda-1)^2}{\delta^2(2\lambda-1)^2}}\Biggr),
$$
where $1-\mt C_1(\mu,\lambda)\vc e_1\cdot\vc e_1 = \eps_0$. Therefore,
$$
\rho = \frac12 + \chi \frac12 \frac{\sqrt{1-\eps_0\frac{1+\delta^2(2\lambda-1)^2}{\delta^2(2\lambda-1)^2}}}{\sqrt{1-\eps\frac{1+\delta^2(2\lambda-1)^2}{\delta^2(2\lambda-1)^2}}}	= \frac12\Bigl ( 1 +\chi \frac{\sqrt{1 - 4(\mu-\mu^2)}}{\sqrt{1 - 4\beta_1(\mu-\mu^2) } } 	\Bigr).
$$

In order to deduce \eqref{eq:control}, it therefore remains to estimate the quantities in \eqref{muStar}.
Exploiting \eqref{eq:boundary} in combination with the assumption that $\lambda \in (0,\frac{1}{4})\cup (\frac{3}{4},1)$, we obtain
\begin{align*}
|\mu (\mu-1)| \leq \frac65 (1+\delta^{-2}) \eps_0 =: \tilde{C}_1 \eps_0
\end{align*}
for some constant $\tilde{C}_1=\tilde{C}_1(\delta)>1$.
Assuming without loss of generality, that $\chi = 1$ and Taylor expanding the square root, we then deduce that
\begin{align*}
|\rho- 1| = \frac{1}{2} \left| \sqrt{ \frac{1-4(\mu-\mu^2)}{1-4\beta_1 (\mu - \mu^2)}} -1 \right|
\leq (1+\beta_1)|\mu-\mu^2| + O(|\mu-\mu^2|^2)
\leq C_1 \eps_0,
\end{align*}
with $C_1>0$ depending only on $\delta$ (as $\beta_1 \leq 1$).
Arguing similarly in the cases that $\chi = -1$ and
noting that $\mu = \frac{1}{2}= \mu^{\ast}$ is impossible by the above smallness assumptions hence concludes the proof. 
\end{proof}

In a similar way we also obtain an analogous splitting result in the second rank-one system:

\begin{lem}
\label{lemma3}
Let $\mt F = \mt R\mt F_2(\mu,\lambda)\in \inte(\mathcal{G})$ for some $(\mu,\lambda)\in(0,1)^2,$ $\lambda\neq \frac12$ and some $\mt R\in SO(2)$. Let $\eps_0:= 1 +\delta^2 - \mt C_2(\mu,\lambda)\vc e_2\cdot\vc e_2$. Then, for any $\eps\in (0,\eps_0]$, there exists $\mu^*,\rho\in[0,1]$ such that
\begin{align}
\label{eq:decompose2}
\begin{split}
&\mt F = \rho\mt R\mt F_2(\mu^*,\lambda)+(1-\rho) \mt R\mt F_2(1-\mu^*,\lambda),\\ 
& \rank(\mt F_2(\mu^{\ast},\lambda)- \mt F_2(1-\mu^{\ast},\lambda))=1,\\
&\mt C_2(\mu^*,\lambda)\vc e_2\cdot\vc e_2 = \mt C_2(1-\mu^*,\lambda)\vc e_2\cdot\vc e_2 = 1-\eps.
\end{split}
\end{align}
In particular, $\mt C_2(\mu^{\ast},\lambda) \vc e_1 \cdot \vc e_1 = \mt C_2(\mu,\lambda) \vc e_1 \cdot \vc e_1$.

Furthermore, if $\lambda(1-\lambda) \in (0,\frac{3}{16})$ and $\epsilon_1 \in (0, 2^{-4} \min\{ \delta^2,1\})$, there exists a constant $C_1=C_1(\delta)\geq 1$ such that for $\chi = \sgn\bigl(\bigl(\frac12 -\mu\bigr)\bigl(\frac12 -\mu^*\bigr)\bigr)$
\begin{align}
\label{eq:control2}
\left|\rho - \frac{1}{2}(\chi+1) \right| \leq C_1 \eps_0,
\end{align}
and
\begin{align}
\label{eq:distF2}
\left|\mt F - \mt R \mt F_2\left(\mu^{\ast} + \frac{1}{2}(1-\chi)(1-2\mu^{\ast}),\lambda \right) \right| \leq C_1 \eps_0.
\end{align}
\end{lem}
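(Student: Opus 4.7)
My approach is to mirror the argument for Lemma \ref{lemma2} essentially line by line, exploiting the structural symmetry between $\mt C_1$ and $\mt C_2$ described in Remark \ref{rmk:symm}. Just as Lemma \ref{lemma2} was tailored to the first rank-one connection (with direction $\vc u_1 = \vc e_1$ and with $c_{22}$ depending only on $\lambda$ in the expression \eqref{C1Mat}), Lemma \ref{lemma3} is tailored to the second rank-one connection (with direction $\vc u_2 = \vc e_2$ from Lemma \ref{lemma1} and with $c_{11}$ depending only on $\lambda$ in the expression \eqref{C2Mat}). Thus the roles of the two indices are swapped, and everything else carries over. Alternatively, one could pull back through the change of variables \eqref{eq:trafo} relating $\mt F_0$ and $\tilde{\mt F}_0$, but a direct calculation is cleaner.

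\textbf{Step 1: solving for $\mu^{\ast}$.} Reading off the $(2,2)$-entry from \eqref{C2Mat},
\begin{equation*}
\mt C_2(\mu,\lambda)\vc e_2\cdot\vc e_2 = 1+\delta^2 + \frac{4\delta^2(\delta^2+1)(2\lambda-1)^2(\mu^2-\mu)}{\delta^2(2\lambda-1)^2+1},
\end{equation*}
which is a quadratic in $\mu$, symmetric about $\mu=\tfrac12$, and depending on $\lambda$ only through $(2\lambda-1)^2$. Setting the $\mu^{\ast}$-value equal to the appropriate target and writing $\beta_2 := \eps/\eps_0$, the two roots of the resulting equation can be written in the form
\begin{equation*}
\mu^{\ast} = \tfrac{1}{2}\bigl(1 \pm \sqrt{1 - 4\beta_2(\mu-\mu^2)}\bigr),
\end{equation*}
exactly as in \eqref{muStar}. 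The constraint $\eps \leq \eps_0$ ensures $\beta_2 \in [0,1]$ and hence that both roots are real and lie in $[0,1]$.

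\textbf{Step 2: decomposition and the weight $\rho$.} By Lemma \ref{lemma1} applied with $i=2$, the difference $\mt F_2(\mu^{\ast},\lambda) - \mt F_2(1-\mu^{\ast},\lambda)$ is rank-one with direction $\vc u_2 = \vc e_2$, which gives the second claim in \eqref{eq:decompose2}. Solving
\begin{equation*}
\mt R\mt F_2(\mu,\lambda) = \rho\mt R\mt F_2(\mu^{\ast},\lambda)+(1-\rho) \mt R\mt F_2(1-\mu^{\ast},\lambda)
\end{equation*}
for $\rho$ — which amounts to a scalar linear equation in the $\vc e_2$-direction after cancelling $\mt R\mt A_2(\lambda)$ and the $\vc w_2$-prefactor — yields $\rho = 1 + \frac{\mu-\mu^{\ast}}{2\mu^{\ast}-1}$. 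Substituting the two expressions for $\mu$ and $\mu^{\ast}$ (analogous to Step 1 with $\beta_2$ replaced by $1$ in the formula for $\mu$) and simplifying gives
\begin{equation*}
\rho = \tfrac{1}{2}\Bigl(1 + \chi\,\frac{\sqrt{1-4(\mu-\mu^2)}}{\sqrt{1-4\beta_2(\mu-\mu^2)}}\Bigr),
\end{equation*}
with $\chi = \sgn((\tfrac12 - \mu)(\tfrac12 - \mu^{\ast}))$, exactly as in \eqref{muStar}. The identity $\mt C_2(\mu^{\ast},\lambda)\vc e_1\cdot \vc e_1 = \mt C_2(\mu,\lambda)\vc e_1\cdot\vc e_1$ is immediate from \eqref{C2Mat}, where the $(1,1)$-entry is independent of $\mu$.

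\textbf{Step 3: the Lipschitz estimates.} The smallness assumption $\lambda(1-\lambda) \in (0,\tfrac{3}{16})$ — equivalently $(2\lambda-1)^2 \geq \tfrac{1}{4}$ — combined with the explicit formula for $\mt C_2(\mu,\lambda)\vc e_2\cdot\vc e_2$ gives a bound of the form $|\mu-\mu^2| \leq \tilde C(\delta)\eps_0$, with a constant depending only on $\delta$; the assumption $\eps_0 < 2^{-4}\min\{\delta^2,1\}$ keeps us away from the degenerate configuration $\mu = \tfrac12 = \mu^{\ast}$. Taylor-expanding the two square roots appearing in $\rho$ then yields \eqref{eq:control2}, while substituting the expansion of $\mu^{\ast}$ directly into $\mt R\mt F_2(\cdot,\lambda)$ and using the $C^1$-regularity of $\mt F_2$ in its first argument (visible from the definition in \eqref{eq:F_i}) yields \eqref{eq:distF2}. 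The expected main obstacle is only bookkeeping — keeping the coefficients explicit so that the resulting constant $C_1$ depends solely on $\delta$ and matches the one obtained in Lemma \ref{lemma2} (so that a single constant can be used in both lemmas).
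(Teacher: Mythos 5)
Your proposal is correct and takes precisely the route of the paper, whose own proof of Lemma \ref{lemma3} simply states that it ``proceeds along the same lines as the proof of Lemma \ref{lemma2}'' and omits the details; your write-up supplies the role-swapped version (second rank-one connection, direction $\vc u_2=\vc e_2$, the $(2,2)$-entry of $\mt C_2$ in the role of the $(1,1)$-entry of $\mt C_1$). You also correctly read ``$\epsilon_1$'' in the hypothesis as a typo for $\eps_0$, matching the statement of Lemma \ref{lemma2}.
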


\begin{proof}
The argument proceeds along the same lines as the proof of Lemma \ref{lemma2}. We hence omit the details.
\end{proof}

\section{In-Approximation}
\label{sec:in_approx}
In this section, we discuss the in-approximation which we will be using in the sequel (see also Figure \ref{fig:inapprox}).

\begin{defi}
\label{defi:in_approx}
Let $k\in \N$, $k\geq 1$, let $\mt F \in  \inte(K^{qc})$ and denote by $\mt C(\mt F) = \mt F^T \mt F$ the associated Cauchy-Green tensor. We set $\zeta_0:=2^{-4}\min\{\delta^{2},1\}$, and
\begin{align*}
\mathcal{V}_{2k} &:= 
\Set{\mt F \in \inter K^{qc} \,\Bigg|\; \text{\parbox{2.8in}{\centering 
 $\left|\mt C(\mt F) \vc e_1 \cdot \vc e_1 -1\right|\in \zeta_0{2^{-(k+3)}}\cdot (5,7),$ $\left|\mt C(\mt F) \vc e_2 \cdot \vc e_2 -(1+\delta^2)\right|\in \zeta_0 2^{-k} \cdot(1,2)$
}}},\\
\mathcal{V}_{2k+1} &:= 
\Set{\mt F \in \inter K^{qc} \,\Bigg|\; \text{\parbox{2.8in}{\centering 
 $\left|\mt C(\mt F) \vc e_1 \cdot \vc e_1 - 1\right|\in \zeta_0{2^{-(k+1)}}\cdot (1,2),$ $\left|\mt C(\mt F) \vc e_2 \cdot \vc e_2 -(1+\delta^2)\right|\in \zeta_0 2^{-(k+3)} \cdot(5,7)$
 }}}.
\end{align*}
Further we define
\begin{align*}
\mathcal{V}_{1} &:= 
\Set{\mt F \in \inter K^{qc}\setminus\left(\bigcup\limits_{k=2}^{\infty} \mathcal{V}_{k}\right) \,\Bigg|\; \text{\parbox{2.8in}{\centering 
$\left|\mathsf{C}(\mathsf{F}) \mathbf{e}_1 \cdot \mathbf{e}_1 - 1 \right|\in 2^{-(m+1)}\zeta_0 \cdot (1,2)$ for some $m\geq 1$ and $\left|\mt C (\mt F) \vc e_2 \cdot \vc e_2-(1+\delta^2)\right| \geq 7\cdot 2^{-(m+3)}\zeta_0$
}}},\\
\mathcal{V}_{0} &:= \inte(K^{qc}) \setminus\left(\bigcup\limits_{k=0}^{\infty} \mathcal{V}_{k}\right).
\end{align*}
\end{defi}

\begin{figure}
\centering
\begin{tikzpicture}[scale=4]

\draw[->,thick,black] ({0.4*2.001},0.8) -- ({0.4*2.001},2.1);
\draw[->,thick,black] ({0.4*2.001},0.8) -- ({1.1*2.001},0.8);

\filldraw [black] ({1.1*2.001},0.8) circle (0pt) node[anchor=north,black] {$\mt C\vc e_1\cdot\vc e_1$};
\filldraw [black] ({0.4*2.001},2.1) circle (0pt) node[anchor=west,black] {$\mt C\vc e_2\cdot\vc e_2$};

\draw[-,black] ({(0.4-0.01)*2.001},2) -- ({(0.4+0.01)*2.001},2);\filldraw [black] ({(0.4-0.01)*2.001},2) circle (0pt) node[anchor=east,black] {$1+\delta^2$};
\draw[-,black] ({(0.4-0.01)*2.001},1) -- ({(0.4+0.01)*2.001},1);\filldraw [black] ({(0.4-0.01)*2.001},1) circle (0pt) node[anchor=east,black] {$1$};
\draw[-,black] ({0.5*2.001},0.8-0.02) -- ({0.5*2.001},0.8+0.02);\filldraw [black] ({0.5*2.001},0.8-0.02) circle (0pt) node[anchor=north,black] {$\frac{1}{1+\delta^2}$};
\draw[-,black] ({1*2.001},0.8-0.02) -- ({1*2.001},0.8+0.02);\filldraw [black] ({1*2.001},0.8-0.02) circle (0pt) node[anchor=north,black] {$1$};

\filldraw [black] ({2.001*0.8},2) circle (0pt) node[anchor=north,black] {$\dots$};
\filldraw [black] ({2.001*1.005},1.65) circle (0pt) node[anchor=south,black,rotate=90] {$\dots$};

\fill [cyan, domain=  { (2-2^(-4))^(-1) }:{1-2^(-5)}, variable=\y] 
({(1-2^(-5))*2.001},{2-2^(-4)}) -- 
plot ({\y*2.001}, {1/\y})
 -- cycle;

\foreach \x in {2,3,4}
\fill [green, domain=  {((1  - 2^(-\x))}:{(1  - 2^(-\x-1))}, variable=\y] 
({(1  - 2^(-\x))*2.001},{2 - 2^(-\x) + 0.25*2^(-\x-1) }) -- 
plot ({\y*2.001}, {1/\y})
--  ({(1  - 2^(-\x-1))*2.001},{2 - 2^(-\x)+ 0.25*2^(-\x-1)}) -- cycle;

\foreach \x in {2,3,4,5}
\draw[black,fill=red] ({(1  - 7*2^(-\x-3))*2.001},{2 - 2^(-\x+1)}) -- ({(1  - 5*2^(-\x-3))*2.001},{2 - 2^(-\x+1)}) -- ({(1  - 5*2^(-\x-3))*2.001},{2 - 2^(-\x)}) --  ({(1  - 7*2^(-\x-3))*2.001},{2 - 2^(-\x)}) -- cycle;

\foreach \x in {2,3,4,5}
\draw[black,fill=yellow] ({(1  - 2^(-\x))*2.001},{2 - 7*2^(-\x-3)}) -- ({(1  - 2^(-\x-1))*2.001},{2 - 7*2^(-\x-3)}) -- ({(1  - 2^(-\x-1))*2.001},{2 - 5*2^(-\x-3)}) --  ({(1  - 2^(-\x))*2.001},{2 - 5*2^(-\x-3)}) -- cycle;

\foreach \x in {1,2,3,4}
\draw[-,black] ({(1)*2.001},{2 - 2^(-\x)}) -- ({((2 - 2^(-\x))^(-1) )*2.001},{2 - 2^(-\x)});

\foreach \x in {2,3,4,5}
\draw[-,black] ({(1  - 2^(-\x))*2.001},{2}) -- ({(1 - 2^(-\x))*2.001},{ (1 - 2^(-\x))^(-1) });

\draw[-,thick,black] ({0.5*2.001},2) -- ({1*2.001},2);
\draw[-,thick,black] ({1*2.001},1) -- ({1*2.001},2);
\draw[domain=0.5:1,smooth,variable=\x,thick] plot ({2.001*\x},{1/\x});

\end{tikzpicture}
\caption{
\label{fig:inapprox}
Schematic representation of the sets $\mathcal{V}_k$ projected onto the $\mt C\vc e_1\cdot\vc e_1$, $\mt C\vc e_2\cdot\vc e_2$ plane. In this figure, the set $K$ coincides with the point $(1,1+\delta^2)$, while the set $\mathcal{G}$ (corresponding to $K^{qc}$) coincides with the region $\left\{(x,y)\in \left((1+\delta^2)^{-1},1 \right)\times \left(1,(1+\delta^2)\right):  y\geq x^{-1}\right\}. $
In red and yellow the sets $\mathcal{V}_k$ for $k\geq 2$ respectively when $k$ is even and odd. The set $\mathcal{V}_{0}$ coincides with the cyan region, plus the vertical lines in the green region, and the upper boundaries of the yellow boxes. The set $\mathcal{V}_1$ coincides with the green region, plus the lower boundary of the yellow boxes and the boundaries of the red boxes.}
\end{figure}
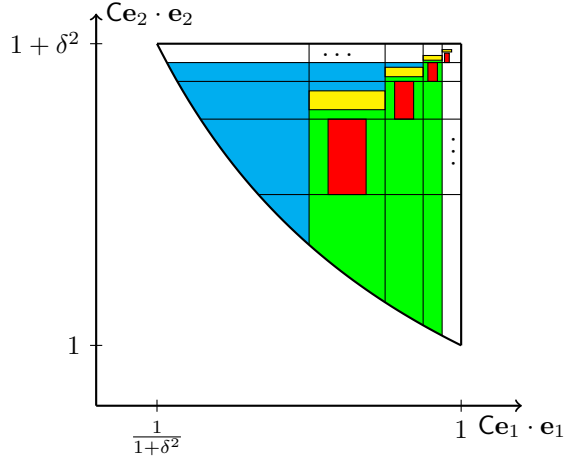

We claim that a suitable subset of these sets form an in-approximation for the two-dimensional geometrically non-linear two-well problem.
We start by giving the definition of in-approximation which we will be using in the sequel:

\begin{defi}
Let $K$ be as in \eqref{defK}. We say that a sequence of relatively open sets $\mathcal{W}_k\subset K^{qc},$ $k\geq 1$ is an in-approximation of $K$ in $K^{qc}$ if:
\begin{itemize}
\item $\mathcal{W}_k\subset\mathcal{W}_{k+1}^{lc}$ for each $k\geq 1$,
\item $\lim_{k\to \infty}\sup_{\mt X\in \mathcal{W}_k}\dist(\mt X, K)=0$.
\end{itemize}
\end{defi}

\begin{rmk}
Let $\mt F\in \inter K^{qc},$ and let $\mt G\in B(\mt F,\eps)\cap \{\mt H\colon \det \mt H=1\}$. Then, as the set $\{\mt H \in \R^{2\times 2}: \ \det(\mt H)=1\}$ forms a co-dimension one manifold which locally coincides with $K^{qc}$, for any $\eps>0$ small enough, $\mt G\in K^{qc}$. For this reason, a sequence of sets relatively open in $K^{qc}$ is also relatively open in the set $\R^{2\times2}\cap \{\mt H\colon \det \mt H=1\}$.
\end{rmk}

The following result holds:

\begin{lem}
\label{lem:in_approx}
Let $k\in \N,$ $k\geq 2$, and let the sets $\mathcal{V}_k$ be defined as in Definition \ref{defi:in_approx}. Then they form an in-approximation for the two-well problem with $K$ as in \eqref{defK}.
\end{lem}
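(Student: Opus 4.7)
The plan is to verify the two defining properties of an in-approximation: $\mathcal{V}_k \subset \mathcal{V}_{k+1}^{lc}$ for each $k\geq 2$, and $\sup_{\mt X \in \mathcal{V}_k} \dist(\mt X, K) \to 0$ as $k\to \infty$. Relative openness in $K^{qc}$ of each $\mathcal{V}_k$ follows at once from the open-interval conditions in Definition \ref{defi:in_approx} together with continuity of $\mt F \mapsto \mt C(\mt F)$.

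I would first dispatch the distance condition. For $\mt X \in \mathcal{V}_k$, Definition \ref{defi:in_approx} bounds $|c_{11}(\mt X) - 1|$ and $|c_{22}(\mt X) - (1+\delta^2)|$ by a constant multiple of $2^{-k/2}$ uniformly in $\mt X$. Combined with the determinant constraint $c_{12}^2 = c_{11}c_{22} - 1$, this forces $c_{12}\to \pm\delta$, so that $\mt C(\mt X)$ approaches one of $\mt F_0^T \mt F_0$ or $\mt F_0^{-T}\mt F_0^{-1}$ as $k \to \infty$. A standard Lipschitz bound on the matrix square root over symmetric positive-definite matrices bounded away from zero, together with polar decomposition, then yields $\dist(\mt X, SO(2)\mt F_0 \cup SO(2)\mt F_0^{-1}) \to 0$ uniformly.

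The convex-hull inclusion is the main point. I would exploit the ``orthogonality'' of the two rank-one representations $\mt F_1(\mu,\lambda)$, $\mt F_2(\mu,\lambda)$ observed in Remark \ref{rmk:symm}: varying $\mu$ in $\mt F_2$ moves $c_{22}$ while keeping $c_{11}$ fixed, and vice versa for $\mt F_1$. Concretely, for $\mathcal{V}_{2k}\subset \mathcal{V}_{2k+1}^{lc}$ with $k\geq 1$, given $\mt F\in\mathcal{V}_{2k}$ I would write $\mt F = \mt R \mt F_2(\mu,\lambda)$ so that $\eps_0 := (1+\delta^2) - c_{22}(\mt F) \in (8,16)\zeta_0 2^{-(k+3)}$. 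Using $\mt C_2\vc e_1 \cdot\vc e_1 = 1 - 4\delta^2\lambda(1-\lambda)/(\delta^2+1)$ together with $\zeta_0 = 2^{-4}\min\{\delta^2,1\}$, one verifies $\lambda(1-\lambda) < 3/16$ and $\eps_0 < \zeta_0$, so Lemma \ref{lemma3} applies. Choosing $\eps := 6\zeta_0 2^{-(k+3)}<\eps_0$ produces the rank-one decomposition $\mt F = \rho \mt R\mt F_2(\mu^*,\lambda) + (1-\rho)\mt R \mt F_2(1-\mu^*,\lambda)$ whose endpoints both have $c_{22}$-gap equal to $\eps \in (5,7)\zeta_0 2^{-(k+3)}$ and unchanged $c_{11}$-gap in $(5,7)\zeta_0 2^{-(k+3)} \subset (4,8)\zeta_0 2^{-(k+3)}$. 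Hence both endpoints lie in $\mathcal{V}_{2k+1}$, so $\mt F \in \mathcal{V}_{2k+1}^{lc}$.

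The twin case $\mathcal{V}_{2k+1}\subset \mathcal{V}_{2k+2}^{lc}$ I would handle by the same recipe, now employing the first rank-one representation and invoking Lemma \ref{lemma2} with $\eps := 6\zeta_0 2^{-(k+4)}$, which shrinks the $c_{11}$-gap into $(5,7)\zeta_0 2^{-(k+4)}$ while preserving $c_{22}$ (and hence the relevant $\lambda$). The main obstacle throughout is the dyadic bookkeeping: one must ensure that each chosen $\eps$ lies strictly below the ambient $\eps_0$, that the associated $\lambda$ satisfies $\lambda(1-\lambda) < 3/16$, and that the resulting $c_{11}$- and $c_{22}$-gaps lie strictly inside the open intervals defining the target set $\mathcal{V}_{k+1}$. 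All three verifications are elementary but rigid algebraic consequences of the specific choice $\zeta_0 = 2^{-4}\min\{\delta^2,1\}$.
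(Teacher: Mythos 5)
Your proof is correct and follows the same approach as the paper's: you shrink the $c_{22}$-gap (resp.\ $c_{11}$-gap) dyadically via the rank-one splitting of Lemma~\ref{lemma3} (resp.\ Lemma~\ref{lemma2}) while the complementary Cauchy--Green component is held fixed, and you observe that the hypothesis bounds in Definition~\ref{defi:in_approx} force $\mt C(\mt F_k)$ to converge to $\mt F_0^T\mt F_0$ or $\mt F_0^{-T}\mt F_0^{-1}$ for the distance condition. Your write-up is somewhat more explicit than the paper's (which only sketches the even case and leaves the dyadic arithmetic and the Cauchy--Green-to-$\dist$ step implicit), but the argument is the same.
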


\begin{proof}
Let $\{\mt F_k\}_{k\in\N}$ be a sequence such that $\mt F_k \in \mathcal{V}_k$.
Then, by definition, $\mt C(\mt F_k)$ is such that $|1-\mt C(\mt F_k) \vc e_1 \cdot \vc e_1| \leq C 2^{-\frac k2 }$, $|\mt C(\mt F) \vc e_2 \cdot \vc e_2 -( 1 + \delta^2)| \leq C 2^{-\frac k2 }$, for some $C>0$, and $\det(\mt F_k)=1$. As a consequence, $\lim_{k\to \infty}\sup_{\mt X\in \mathcal{V}_k}\dist(\mt X,K)=0$.

Hence it remains to prove that $\mathcal{V}_k \subset (\mathcal{V}_{k+1})^{lc}$. We argue in the case $k$ even, as the case $k$ odd can be treated similarly. Then, if $\mt F \in \mathcal{V}_{k}= \mathcal{V}_{2k_0}$ for some $k_0 \in \N$, we apply Lemma \ref{lemma3} to obtain a decomposition
\begin{equation}
\label{Decomp}
\mt F = \rho \mt R \mt F_2(\mu^{\ast},\lambda) + (1-\rho) \mt R \mt F_2(1-\mu^{\ast},\lambda),
\end{equation}
such that 
\begin{align*}
1+\delta^2 - \mt C_2(\mu^{\ast},\lambda) \vc e_2 \cdot \vc e_2 =  \varepsilon \in 2^{-(k_0+3)}\zeta_0 \cdot (5,7).
\end{align*}
By symmetry (see Remark \ref{rmk:symm}) the same also holds for $\mt{C}_2(1-\mu^{\ast},\lambda) \vc e_2 \cdot \vc e_2$ and as $\mt C_2(\mu^{\ast},\lambda) \vc e_1 \cdot \vc e_1 = \mt{C}_2(1-\mu^{\ast},\lambda) \vc e_1 \cdot \vc e_1 = \mt{C}(\mt F) \vc e_1 \cdot \vc e_1$, we infer that $\mt R \mt F_2(\mu^{\ast},\lambda)$, $\mt R \mt F_2(1-\mu^{\ast},\lambda) \in \mathcal{V}_{k+1}$ and thus $\mt  F \in (\mathcal{V}_{k+1})^{lc}$ which proves the claim.
\end{proof}

\begin{rmk}
Let us comment on our choice of in-approximation: Our in\hyp approximation is designed in such a way that for $k\geq 2$ the sets $\mathcal{V}_{k}$ approximate the energy wells (in Cauchy-Green space) dyadically. We seek to iteratively move from $\mathcal{V}_k$ to $\mathcal{V}_{k+1}$ (when $k\geq2$) by splitting along rank-one segments. 

The sets $\mathcal{V}_0$ and $\mathcal{V}_1$ are less restrictive than the sets $\mathcal{V}_k$ with $k\geq 2$, in that any datum in $\mathcal{G}$ which is \emph{not} already in a dyadic neighbourhood of the wells is contained in these. However,
the full sequence $\{\mathcal{V}_k\}_{k\geq 0}$ is not an in-approximation. Indeed, for $k\in \{0,1\}$, the inclusion $\mathcal{V}_k\subset\mathcal{V}_{k+1}^{lc}$ does not hold in general. 
Nonetheless, using the splitting results from Lemmas \ref{lemma2}, \ref{lemma3}, any $\mt F\in \inter K^{qc}$ can be moved within two replacement constructions from $\mathcal{V}_{0},\mathcal{V}_{1}$ to some (not necessarily the first) dyadic neighbourhood $\mathcal{V}_k$, $k \geq 2$, of the wells (see Lemmas \ref{lem:diamond_replace}, \ref{lem:A3}).
\end{rmk}

\section{The Replacement Construction in Suitable Diamond-Shaped Domains}

In this section we recall the non-linear replacement construction from \cite{C}. Together with the matrix space geometry discussed in the previous section, this will serve as the building block for our replacement construction and will allow us to verify the conditions (A2), (A3), (A4) from \cite{RZZ18} (which will be recalled and explained in Section \ref{sec:Covering} below).

\begin{lem}[Lemma 2.3 in \cite{C}]
\label{lem:Conti}
Let $\lambda \in (0,1)$ and $h \in(0,8^{-1})$.
Let $\mt A, \mt B, \mt C \in \R^{2\times 2}$ with $\mt C=\lambda \mt A + (1-\lambda) \mt B$, $\det \mt A = \det \mt B = \det \mt C = 1,$ and $\rank(\mt A -\mt B)\leq 1$. 
Then there exists a domain $\Omega_{h,\mt R}:= \mt R \conv(\{ \pm h \vc e_{1}, \pm \vc e_2\})$ with $\mt R \in SO(2)$, and a deformation $\vc u: \Omega_{h,\mt R} \rightarrow \R^2$ such that for some constant $C_1\geq 1$ which does not depend on $h$ or $\lambda$ 
\begin{itemize}
\item[(i)] $\vc u(\vc x)= \mt C \vc x$ on $\partial \Omega_{h, \mt R}$,
\item[(ii)] $\nabla \vc u$ attains five values and its level-sets are the union of (up to a set of measure zero) two disjoint open triangles, each of perimeter less than $\Omega_{h, \mt R},$
\item[(iii)] $\det(\nabla \vc u)=1$ a.e. in $\Omega_{h, \mt R}$,
\item[(iv)] $\min\{|\nabla \vc u - \mt A|, |\nabla\vc u -\mt B|\} \leq e:= C_1|\mt A-\mt B| \min\{\lambda, 1-\lambda\}h$ a.e. in $\Omega_{h, \mt R}$,
\item[(v)] $|\vc u(\vc x)-\mt C\vc x|\leq C_1|\mt A-\mt B| (1-\lambda)\lambda h$ for any $\vc x\in\Omega_{h,\mt R}$. 
\item[(vi)] Assuming that $\lambda \leq 1- \lambda$ we have
\begin{align*}
\Omega_{\mt A}:=\frac{|\{ \vc x \in \Omega_{h, \mt R}: \ |\nabla \vc u(\vc x) - \mt A|<|\nabla \vc u( \vc x) - \mt B|\}|}{|\Omega_{h, \mt R}|} &= \lambda (1+(1-\lambda)h),\\
\Omega_{\mt B}:=\frac{|\{\vc x \in \Omega_{h, \mt R}: \ |\nabla \vc u( \vc x) - \mt B|<|\nabla \vc u( \vc x) - \mt A|\}|}{|\Omega_{h, \mt R}|} &= 1-\lambda (1+(1-\lambda)h).
\end{align*}
If $\lambda > 1-\lambda$ the bounds in (iv) are reversed.
\end{itemize}
\end{lem}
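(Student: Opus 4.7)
The plan is to follow the explicit piecewise affine construction from \cite[Lemma 2.3]{C}, whose existence is asserted, and to verify the quantitative bounds in (iv)--(vi) which are what distinguishes our statement from the qualitative version. By interchanging $\mt A \leftrightarrow \mt B$ we may assume $\lambda \leq 1/2$. Writing $\mt A - \mt B = \vc a \otimes \vc n$ and choosing the rotation $\mt R \in SO(2)$ in the definition of $\Omega_{h,\mt R}$ so that $\mt R^T \vc n = \vc e_1$, we pull back to the canonical diamond $\Omega_h := \conv(\{\pm h \vc e_1, \pm \vc e_2\})$ with lamination normal $\vc e_1$; the rotated deformation will inherit the bounds in their rotation-invariant form.

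Next I would decompose $\Omega_h$ into a central laminated region formed by alternating parallelogram strips with normal $\vc e_1$ and volume fractions $\lambda, 1-\lambda$, together with two triangular caps at $\pm h \vc e_1$. In the naive simple-laminate $\nabla \vc u = \mt A$ or $\mt B$ would already give $\det \nabla \vc u = 1$, but it fails to match $\mt C \vc x$ on the slanted boundaries. The fix, following \cite{C}, is to replace the two phases by modified matrices
\begin{equation*}
\mt A_h = \mt A + \vc b_A \otimes \vc e_1, \qquad \mt B_h = \mt B + \vc b_B \otimes \vc e_1,
\end{equation*}
where $\vc b_A, \vc b_B$ are chosen so that (a) $\det \mt A_h = \det \mt B_h = 1$, which via $\det(\mt F + \vc v \otimes \vc w) = \det \mt F + \cof(\mt F) \vc v \cdot \vc w$ becomes one linear equation for each; and (b) when combined with appropriate affine gradients in the two triangular caps, the resulting $\vc u$ matches $\mt C \vc x$ on $\partial \Omega_h$. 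Solving this finite linear system (solvable because $h$ is small and $\cof \mt A$, $\cof \mt B$ are non-degenerate) produces the bound $|\vc b_A|, |\vc b_B| \lesssim |\mt A - \mt B|\, \min\{\lambda, 1-\lambda\}\, h$. The five gradients in (ii) are $\mt A_h$, $\mt B_h$, plus up to three affine values in the cap pieces, and a careful enumeration confirms that each level set is the union of at most two triangles.

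Given this construction, the remaining verifications are direct. Item (i) is built in; (ii), (iii) follow from the explicit choice of gradients; (iv) is the bound on $|\vc b_A|, |\vc b_B|$ together with the analogous bound for the cap gradients (which arise from the same linear system); (v) follows from (iv) by integrating $\nabla \vc u - \mt C$ along segments of length at most $1$ issuing from $\partial \Omega_h$, where $\vc u = \mt C \vc x$; (vi) is the explicit area computation for the laminated strips combined with the asymmetry of the two triangular caps, yielding the extra correction $\lambda(1-\lambda) h$.

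The main obstacle is Step 2--3: establishing simultaneous solvability of the constraints (rank-one compatibility of the modified phases along $\vc e_1$, preservation of $\det = 1$, and matching $\mt C \vc x$ on the slanted sides through the caps) with the \emph{linear} dependence of the error on $\min\{\lambda, 1-\lambda\}\, h$. This linearity, rather than a weaker polynomial-in-$h$ estimate, is what drives the self-improving mechanism alluded to in Remark \ref{rmk:Lip_dep} and ultimately yields the exponential (rather than super-exponential) $BV$ bounds downstream in the convex integration algorithm; it ultimately comes from a first-order Taylor expansion of $\det$ around $\mt A, \mt B$ in which the linear part is controlled by $|\mt A - \mt B|$.
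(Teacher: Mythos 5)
Your approach deviates from the construction the lemma actually relies on, and the deviation introduces genuine gaps. The paper (following \cite{C}) builds a \emph{double} laminate: after normalizing to $\tilde{\mt A}= \mt{Id} + (1-\lambda)\vc e_2\otimes\vc e_1$, $\tilde{\mt B}= \mt{Id} - \lambda\vc e_2\otimes\vc e_1$, one laminates in the $\vc e_1$-direction between $\tilde{\mt A}, \tilde{\mt B}$ and simultaneously composes with a compensating laminate in the \emph{orthogonal} $\vc e_2$-direction between $\mt E = \mt{Id} - q(1-\mu)\vc e_1\otimes\vc e_2$ and $\mt F = \mt{Id} + q\mu\vc e_1\otimes\vc e_2$, with $q = h^2\lambda(1-\lambda)/(\mu(1-\mu))$ chosen exactly so that $\det$ is preserved. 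The five gradients are then $\tilde{\mt A}\mt E$, $\tilde{\mt A}\mt F$, $\tilde{\mt B}\mt E$, and two interpolation matrices $\mt G,\mt H$ near the tips. Your proposal instead modifies $\mt A,\mt B$ by rank-one perturbations along $\vc e_1$ (the \emph{same} direction as the primary laminate) and stacks alternating parallelogram strips with two caps at $\pm h\vc e_1$. This is not the Conti construction, and it does not work: the boundary of the diamond consists of four slanted sides that span the entire range $x_1\in(-h,h)$, so the mismatch $\vc u - \mt C\vc x$ from a simple laminate with normal $\vc e_1$ is present along the whole boundary, not only near the tips. Triangular caps at $\pm h\vc e_1$ cannot repair this. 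Moreover, ``alternating parallelogram strips'' is incompatible with (ii), which requires each of the five level sets to be a union of exactly two triangles.

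There is also a concrete error in your deduction of (v). You claim (v) ``follows from (iv) by integrating $\nabla \vc u - \mt C$ along segments of length at most $1$ issuing from $\partial\Omega_h$''. But (iv) bounds $\min\{|\nabla\vc u - \mt A|, |\nabla\vc u - \mt B|\}$, which says nothing about $|\nabla\vc u - \mt C|$; pointwise, $|\nabla\vc u - \mt C|$ is of order $\max\{\lambda,1-\lambda\}|\mt A - \mt B|$, not small. Integrating this over segments of length $\leq 1$ gives no $h$-factor and no $\lambda(1-\lambda)$-factor. The paper instead integrates over \emph{horizontal} segments of length at most $h$ and uses the laminate structure along each such segment: the portion closer to $\mt A$ has length $\lesssim\lambda h$ and there $|\nabla\vc u - \mt C|\lesssim(1-\lambda)|\mt A-\mt B|$, and symmetrically for $\mt B$; summing gives the correct quadratic bound $\lambda(1-\lambda)h|\mt A - \mt B|$. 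Without the precise strip lengths (which come from the double-laminate construction and the choice $\mu=(1-\lambda)h$), this estimate — and the volume-fraction identity in (vi), which is an exact area count of the orange and blue regions in the construction — cannot be obtained. I recommend revisiting \cite[Lemma 2.3]{C} and in particular the role of the orthogonal compensating laminate and the identification $\mu = (1-\lambda)h$, from which the linear-in-$\min\{\lambda,1-\lambda\}h$ bound in (iv) and the quadratic bound in (v) both follow.
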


Let us comment on this construction. For our purposes crucial properties of this replacement construction are:
\begin{itemize}
\item The preservation of the determinant constraint. This allows us to use this replacement construction iteratively without leaving the set $K^{qc}$. Precursor constructions of this can be found in \cite{MS} and \cite{CT05}. The construction from \cite{C}, however, is most convenient for us as all controls are explicit and only a small number of deformation gradients are involved.
\item In the properties (iv)-(v) there is a ``self-improvement'' of the errors in the construction as soon as $\mt C \in \mathcal{V}_k$ with $k\geq 2$. Indeed, in our application of the replacement construction (see Lemmas \ref{lem:diamond_replace}, \ref{lem:diamond}) for $\mt C \in \mathcal{V}_k$, $\mt A,\mt B\in \mathcal{V}_{k+1}$ we will need to ensure that 
\begin{align}
\label{eq:error_est}
e=C_1|\mt A - \mt B|\min\{\lambda, 1-\lambda\} h \sim 2^{-\frac{k}{2}} \zeta_0,
\end{align}
in order to ensure that the replacement function $\nabla \vc u\in \mathcal{V}_{k+1}$ a.e. in $\Omega_{h,\mt R}$.
As the splitting from Lemmas \ref{lemma2}, \ref{lemma3} allows us to choose $\lambda \sim 2^{-\frac{k}{2}}$, and since in our two-well setting $|\mt A - \mt B| \leq 2\delta (1+\delta)^2$, we are always able to achieve the desired error estimate in \eqref{eq:error_est} with a \emph{uniform} choice of $h$ (independent of $k$). We will exploit this heavily in deriving exponential (in contrast to super-exponential) $BV$ estimates.
\item  The volume fractions in the replacement construction in (vi) are ``self-improving''. Indeed, if as in the remark above we take $\mt C \in \mathcal{V}_k$, $\mt A,\mt B\in \mathcal{V}_{k+1}$ with $k\geq 2$ and $\lambda \sim 2^{-k}$ we obtain that the proportion of the domain which is not in the same well as $\mt C$ will also be proportional to $\lambda$. This plays a crucial role for our argument as it will yield geometrically improving $L^1$ estimates.
\end{itemize}

\begin{proof}
We only sketch the proof as the main difference with respect to the argument in \cite{C} is the quantitative dependence of the estimates on $h,\lambda$. In particular, properties (i)-(iii) do not require any modifications. We note that, since $\rank(\mt A-\mt B)\leq 1$, there exist $\vc a, \vc n$ such that $\mt A = \mt B + \vc a\otimes \vc n$. By subtracting $\mt C$, rescaling by $\rho = |\mt A-\mt B|$, rotating, and finally adding the identity matrix, we can replace $\mt A,\mt B$ by 
$$
\tilde{\mt A}= \mt{Id} + (1-\lambda) {\vc e}_2 \otimes {\vc e}_1,\qquad
\tilde{\mt B}= \mt{Id} - \lambda{\vc e}_2 \otimes \vc e_1,
$$ 
$\mt C$ by $\mt{Id}$ and $\vc u (\vc x)$ by $\tilde{\vc u} (\vc x) = \frac{1}{\rho}\mt Q ( {\vc u} (\mt R\vc x) -\mt C\mt R\vc x) +\vc x$ for some $\mt Q,\mt R\in SO(2)$ such that $\mt Q\vc a = \vc e_2$ and $\mt R^T\vc n = \vc e_1$. Further, without loss of generality $\lambda \leq 1-\lambda$. We first construct a deformation gradient $\nabla \vc v$ as in Figure \ref{ContiCostruction} (left), where
$$
\mt E= \mt{Id} - q(1-\mu) {\vc e}_1 \otimes{\vc e}_2,\qquad
\mt F= \mt{Id} + q \mu {\vc e}_1 \otimes {\vc e}_2,
$$ 
with $q = h^2 \frac{\lambda (1-\lambda)}{\mu (1-\mu)}$, as in \cite{C1}. This is possible by using that both $ \tilde{\mt A}$ and $\tilde{\mt B}$ as well as $\mt E$ and $\mt F$ are rank-one connected. The choice of $q$ is determined by the requirement of preserving the determinant. We note that at the points $(0,1)$, $(0,-1)$, $(h,0)$, $(-h,0)$ the function $\vc v$ can be chosen to be equal to the identity map. 

In order to match the boundary conditions in (i), we then define $\nabla \tilde{\vc u}$ to be equal to $\nabla \vc v$ in the orange, the green, and the blue regions of Figure \ref{ContiCostruction} (right). In the red and yellow regions we define $\tilde{\vc u}$ by affine interpolation of the values at the corner points (in particular, the desired boundary conditions are then satisfied on the whole boundary of the diamond). This leads to $\nabla \tilde{\vc u} = \mt G$ and $\nabla \tilde{\vc u} = \mt H$ respectively. Here, 
$$
\mt G= \mt{Id} - p_{\mt G}(-h{\vc e}_1+{\vc e}_2)\otimes ({\vc e}_1+h{\vc e}_2),\qquad
\mt H = \mt{Id} - p_{\mt H}(h{\vc e}_1+{\vc e}_2)\otimes ({\vc e}_1-h{\vc e}_2),
$$
where 
$$
p_{\mt G}=
 \frac{\lambda(1-\lambda)}{(1-\lambda)(1-h\lambda)-\mu},\qquad p_{\mt H}=
 \frac{\lambda(1-\lambda)}{(1-\lambda)(1+h\lambda)-\mu}.
$$

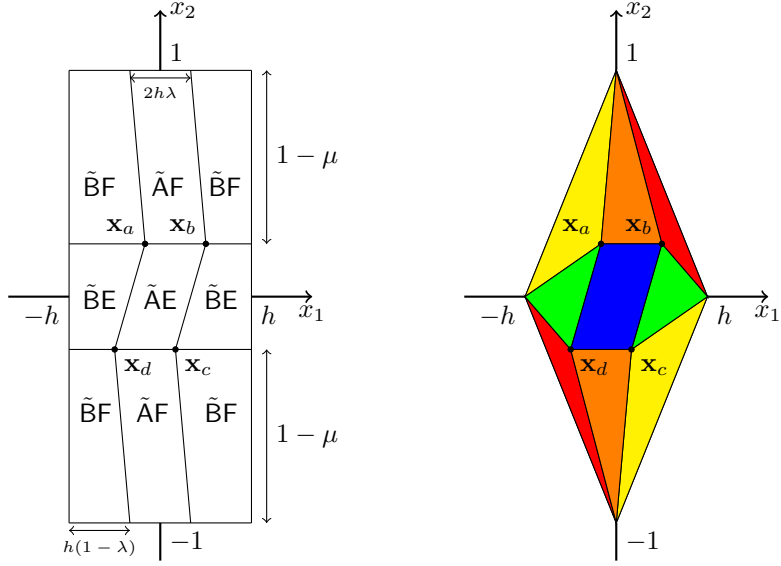
\begin{figure}
\centering
\begin{tikzpicture}
\draw[-,thick,black] (-2,0) -- (-1.2,0);
\draw[-,thick,black] (0,-3.5) -- (0,-3);
\draw[->,thick,black] (1.2,0) -- (2,0);
\draw[->,thick,black] (0,3) -- (0,3.8);

\filldraw [black] (2,0) circle (0pt) node[anchor=north,black] {$x_1$};
\filldraw [black] (0,3.8) circle (0pt) node[anchor=west,black] {$x_2$};

\filldraw [black] (1.2,0) circle (0pt) node[anchor=north west,black] {$h$};
\filldraw [black] (-1.2,0) circle (0pt) node[anchor=north east,black] {$-h$};
\filldraw [black] (0,3) circle (0pt) node[anchor=south west,black] {$1$};
\filldraw [black] (0,-3) circle (0pt) node[anchor=north west,black] {$-1$};

\draw[black,thin] (-1.2,0.7) -- (1.2,0.7);
\draw[black,thin] (-1.2,-0.7) -- (1.2,-0.7);

\draw[black,thin] ({-0.4+0.2},0.7) -- (-0.4,3);
\draw[black,thin] ({0.4+0.2},0.7) -- (0.4,3);
\draw[black,thin] ({-0.4-0.2},-0.7) -- (-0.4,-3);
\draw[black,thin] ({0.4-0.2},-0.7) -- (0.4,-3);
\draw[black,thin] ({-0.4+0.2},0.7) -- ({-0.4-0.2},-0.7);
\draw[black,thin] ({0.4+0.2},0.7) -- ({0.4-0.2},-0.7);

\draw[black,thin] (-1.2,-3) -- (-1.2,3);
\draw[black,thin] (1.2,-3) -- (1.2,3);
\draw[black,thin] (-1.2,3) -- (1.2,3);
\draw[black,thin] (-1.2,-3) -- (1.2,-3);

\filldraw [red] (-0.8,1.5) circle (0pt) node[black] {$\tilde{\mt B}\mt{F}$};
\filldraw [red] (-0.8,0) circle (0pt) node[black] {$\tilde{\mt B}\mt{E}$};
\filldraw [red] ({-0.8-0.05},-1.5) circle (0pt) node[black] {$\tilde{\mt B}\mt{F}$};
\filldraw [red] (0.1,1.5) circle (0pt) node[black] {$\tilde{\mt A}\mt{F}$};
\filldraw [red] (0,0) circle (0pt) node[black] {$\tilde{\mt A}\mt{E}$};
 \filldraw [red] (-0.1,-1.5) circle (0pt) node[black] {$\tilde{\mt A}\mt{F}$};
\filldraw [red] ({0.8+0.05},1.5) circle (0pt) node[black] {$\tilde{\mt B}\mt{F}$};
\filldraw [red] (0.8,0) circle (0pt) node[black] {$\tilde{\mt B}\mt{E}$};
\filldraw [red] (0.8,-1.5) circle (0pt) node[black] {$\tilde{\mt B}\mt{F}$};

\draw[-,thick,black] ({-2+6},0) -- ({-1.2+6},0);
\draw[-,thick,black] (6,-3.5) -- (6,-3);
\draw[->,thick,black] ({1.2+6},0) -- ({2+6},0);
\draw[->,thick,black] (6,3) -- (6,3.8);

\draw[<->,thin,black] (-0.4,{3-0.1}) -- (0.4,{3-0.1});
\filldraw [black] ({0},{3-0.1}) circle (0pt) node[anchor=north,black] {\tiny $2h\lambda$};
\draw[<->,thin,black] (-0.4,{-3-0.1}) -- (-1.2,{-3-0.1});
\filldraw [black] ({-0.8},{-3-0.1}) circle (0pt) node[anchor=north,black] {\tiny $h(1-\lambda)$};

\draw[<->,thin,black] ({1.2+0.2},{3}) -- ({1.2+0.2},{0.7});
\filldraw [black] ({1.2+0.2},{1.85}) circle (0pt) node[anchor=west,black] {${1-\mu}$};
\draw[<->,thin,black] ({1.2+0.2},{-3}) -- ({1.2+0.2},{-0.7});
\filldraw [black] ({1.2+0.2},{-1.85}) circle (0pt) node[anchor=west,black] {${1-\mu}$};

\filldraw [black] ({2+6},0) circle (0pt) node[anchor=north,black] {$x_1$};
\filldraw [black] (6,3.8) circle (0pt) node[anchor=west,black] {$x_2$};

\filldraw [black] ({1.2+6},0) circle (0pt) node[anchor=north west,black] {$h$};
\filldraw [black] ({-1.2+6},0) circle (0pt) node[anchor=north east,black] {$-h$};
\filldraw [black] (6,3) circle (0pt) node[anchor=south west,black] {$1$};
\filldraw [black] (6,-3) circle (0pt) node[anchor=north west,black] {$-1$};

\draw[-,thin,black] ({-1.2+6},0) -- ({0+6},3);
\draw[-,thin,black] ({1.2+6},0) -- ({0+6},3);
\draw[-,thin,black] ({-1.2+6},0) -- ({0+6},-3);
\draw[-,thin,black] ({1.2+6},0) -- ({0+6},-3);

\draw[-,thin,black] ({1.2+6},0) -- ({0.4+6+0.2},0.7);
\draw[-,thin,black] ({1.2+6},0) -- ({0.4+6-0.2},-0.7);
\draw[-,thin,black] ({-1.2+6},0) -- ({-0.4+6+0.2},0.7);
\draw[-,thin,black] ({-1.2+6},0) -- ({-0.4+6-0.2},-0.7);

\draw[-,thin,black] ({6},3) -- ({0.4+6+0.2},0.7);
\draw[-,thin,black] ({6},-3) -- ({0.4+6-0.2},-0.7);
\draw[-,thin,black] ({+6},3) -- ({-0.4+6+0.2},0.7);
\draw[-,thin,black] ({+6},-3) -- ({-0.4+6-0.2},-0.7);

\draw[-,thin,black] ({-0.4+6+0.2},0.7) -- ({0.4+6+0.2},0.7);
\draw[-,thin,black] ({-0.4+6-0.2},-0.7) -- ({0.4+6-0.2},-0.7);
\draw[-,thin,black] ({-0.4+6-0.2},-0.7) -- ({-0.4+6+0.2},0.7);
\draw[-,thin,black] ({0.4+6+0.2},0.7) -- ({0.4+6-0.2},-0.7);
\draw[black,fill=red]  ({0.4+6+0.2},0.7) -- ({1.2+6},0) -- (6,3) -- cycle;
\draw[black,fill=red]  ({6-(0.4+0.2)},-0.7) -- ({-1.2+6},0) -- (6,-3) -- cycle;
\draw[black,fill=yellow]  ({-0.4+6+0.2},0.7) -- ({-1.2+6},0) -- (6,3) -- cycle;
\draw[black,fill=yellow]  ({+0.4+6-0.2},-0.7) -- ({1.2+6},0) -- (6,-3) -- cycle;
\draw[black,fill=orange]  ({+0.4+6-0.2},-0.7) -- ({-0.4-0.2+6},-0.7) -- (6,-3) -- cycle;
\draw[black,fill=orange]  ({-0.4+6+0.2},0.7) -- ({+0.4+0.2+6},0.7) -- (6,3) -- cycle;
\draw[black,fill=green]  ({-0.4+6+0.2},0.7) -- ({-1.2+6},0) -- ({-0.4+6-0.2},-0.7) -- cycle;
\draw[black,fill=green]  ({0.4+6+0.2},0.7) -- ({1.2+6},0) -- ({0.4+6-0.2},-0.7) -- cycle;
\draw[black,fill=blue]  ({0.4+6+0.2},0.7) -- ({0.4+6-0.2},-0.7) -- ({-0.4+6-0.2},-0.7) -- ({-0.4+6+0.2},0.7) -- cycle;

\filldraw [black] ({0.4+0.2},0.7) circle (1pt) node[anchor=south east,black] {$\vc x_b$};
\filldraw [black] ({-0.4+0.2},0.7) circle (1pt) node[anchor=south east,black] {$\vc x_a$};
\filldraw [black] ({0.4-0.2},-0.7) circle (1pt) node[anchor=north west,black] {$\vc x_c$};
\filldraw [black] ({-0.4-0.2},-0.7) circle (1pt) node[anchor=north west,black] {$\vc x_d$};

\filldraw [black] ({0.4+0.2+6},0.7) circle (1pt) node[anchor=south east,black] {$\vc x_b$};
\filldraw [black] ({-0.4+0.2+6},0.7) circle (1pt) node[anchor=south east,black] {$\vc x_a$};
\filldraw [black] ({0.4-0.2+6},-0.7) circle (1pt) node[anchor=north west,black] {$\vc x_c$};
\filldraw [black] ({-0.4-0.2+6},-0.7) circle (1pt) node[anchor=north west,black] {$\vc x_d$};
\end{tikzpicture}
\caption{
\label{ContiCostruction}
Construction of the deformations $\vc v$ and $\tilde{\vc u}$ as in Lemma \ref{lem:Conti}. Here, we used the notation $x_i = \vc x\cdot {\vc e}_i$ for $i=1,2.$ In the left panel, we have illustrated the gradients of $\vc v$, which is a composition of horizontal and vertical laminates. On the right different colors correspond to different level-sets for $\nabla\tilde{\vc u}$: In blue, the set where $\nabla\tilde{\vc u}=\tilde{\mt A}\mt{E}$, in green where $\nabla\tilde{\vc u}=\tilde{\mt B}\mt{E}$, in orange where $\nabla\tilde{\vc u}=\tilde{\mt A}\mt{F}$, and in red and yellow the interpolation layers where $\nabla\tilde{\vc u}\in\{\mt G,\mt H\}$. Outside $\mt R^T\Omega_{h,\mt R}:=\conv(\{\pm h {\vc e}_1,\pm {\vc e}_2\})$ we have $\nabla\tilde{\vc u} = \mt{Id}$. Here $\vc x_a,\vc x_b,\vc x_c,\vc x_d$ are respectively given by $(-\lambda h + q \mu (1-\mu),\mu),$ $(\lambda h +  q \mu (1-\mu),\mu),$ $(\lambda h -  q \mu (1-\mu),-\mu),$ $(-\lambda h - q \mu(1-\mu),-\mu)$.
}
\end{figure}

In order to obtain (iv) outside of the interpolation region, we estimate
\begin{align*}
|\tilde{\mt A}\mt E-\tilde{\mt A}| &\leq |\tilde{\mt A}||\mt E-\mt{Id}| \leq \sqrt{3} q (1-\mu),\qquad
|\tilde{\mt A}\mt F- \tilde{\mt A}| \leq |\tilde{\mt A}||\mt F-\mt {Id}| \leq \sqrt{3} q \mu,\\
|\tilde{\mt B}\mt E-\tilde{\mt B}| &\leq |\tilde{\mt B}||\mt E-\mt{Id}| \leq \sqrt{3} q (1-\mu),\qquad
|\tilde{\mt B}\mt F-\tilde{\mt B}|\leq |\tilde{\mt B}||\mt F-\mt{Id}| \leq \sqrt{3} q \mu.
\end{align*}
Choosing $\mu = (1-\lambda) h$ and recalling that $\lambda \leq 1-\lambda$ implies the claimed estimate for the non-interpolation regions (orange, blue and green regions in Figure \ref{ContiCostruction}). For the interpolation regions we start by noticing that  
\begin{align*}
\mt G = \tilde{\mt B} - \frac{\lambda h}{1-(1+\lambda)h} \begin{pmatrix} -1 & -h \\ \lambda +1 & 1  \end{pmatrix},\qquad
\mt H = \tilde{\mt B} - \frac{\lambda h}{1-(1-\lambda)h} \begin{pmatrix} -1 & h \\ \lambda -1 & 1  \end{pmatrix}
.
\end{align*}
As a consequence, 
\begin{align*}
|\mt G-\tilde{\mt B}| \leq C \lambda h,\qquad|\mt H-\tilde{\mt B}| \leq C \lambda h.
\end{align*}
With this in hand, the estimate from (v) follows from the fundamental theorem. Assume without loss of generality that $\vc x\cdot {\vc e}_1\geq0$ (the other case can be treated similarly), then
\begin{align*}
\frac1\rho|\vc u(\mt R\vc x) - \mt C\mt R\vc x| 
&= |\tilde{\vc u}(\vc x)-  \vc x| \leq \int\limits_{0}^h|\nabla \tilde{\vc u}(\vc x + t{\vc e_1}) - \mt C| dt\\ 
&\leq C_1\left(|\Omega_A^{\vc x}| (1-\lambda) + |\Omega_B^{\vc x}|\lambda \right),
\end{align*}
where $\Omega_{\mt A}^{\vc x}, \Omega_{\mt B}^{\vc x}$ denote the subintervals of the segment connecting $\vc x$ and $\vc x+h{\vc e}_1$ in which $\nabla \vc u$ is closer to $\mt A$ or $\mt B$ respectively. Noting that $|\Omega_{\mt A}^{\vc x}| \leq C_1 \lambda h $ and $|\Omega_{\mt B}^{\vc x}|\leq C_1 (1-\lambda)h$ then concludes the argument.

Last but not least, the estimate on the volume fraction of $\Omega_{\mt A}$ and $\Omega_{\mt B}$ follows from the observations that 
\begin{itemize}
\item  the whole diamond has volume $2h$,
\item the set $\Omega_{\mt A}$ is given by the orange and blue domains in Figure \ref{ContiCostruction} which has volume $|\Omega_{\mt A}| = 2 \frac{1}{2}(2h \lambda)(1-\mu) + 2h \lambda 2\mu 
= 2 h\lambda (1+\mu)
= 2 h \lambda (1+(1-\lambda)h).
$
\end{itemize} 
As a consequence, we obtain the claims in (vi).
\end{proof}

\subsection{Covering in diamond shaped-domains}
With Lemma \ref{lem:Conti} in hand, we can formulate a replacement construction satisfying strong perimeter bounds in the special diamond-shaped domains $\Omega_{h, \mt R}$ with $h\in (0,8^{-1})$, $\mt R \in SO(2)$ which we had introduced in the previous section. As in \cite{RZZ18}, we discuss the cases with boundary conditions $\mt M \in \mathcal{V}_j$, $j\in\{0,1\}$, and $\mt M \in \mathcal{V}_j$, $j\geq 2$, separately. We begin by introducing a replacement construction for deformation gradients $\mt M \in \mathcal{V}_j$ with $j\in \{0,1\}$.

\begin{lem}
\label{lem:diamond_replace}
Let $\mt M \in \mathcal{V}_{j}$ with $j\in \{0,1\}$. Then there exist
\begin{itemize}
\item a diamond-shaped domain $\Omega_{h,\mt R}$ with $h = h(\mt M)$,
\item a piecewise affine deformation $\vc u: \Omega_{h,\mt R} \rightarrow \R^2$,
\end{itemize}
such that
\begin{itemize}
\item[(i)] $\nabla \vc u(\vc x) \in \mathcal{V}_{k}$ a.e. in $\Omega_{h,\mt R}$ for some $k>j$;
\item[(ii)] $\vc u(\vc x) = \mt M \vc x $ on $\partial \Omega_{h,\mt R}$;
\item[(iii)] the level sets of $\nabla \vc u$ consist of four pairs of disjoint open triangles and one parallelogram (which can again be split into two open triangles up to a set of measure zero);
\item[(iv)] if $T_1,\dots, T_{10}$ denote the open triangles from (iii) forming the level sets of $\nabla \vc u$ (after splitting up the parallelogram), then
\begin{align*}
\sum\limits_{j=1}^{10} \Per(T_j) \leq 10 \Per(\Omega_{h,\mt R}).
\end{align*}
\end{itemize}
\end{lem}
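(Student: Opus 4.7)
My approach is to reduce the construction to a single application of Lemma \ref{lem:Conti} whose two endpoints $\mt A, \mt B$ are produced by Lemmas \ref{lemma2} or \ref{lemma3}, so that the whole resulting deformation lands in some target level $\mathcal{V}_k$ of the in-approximation with $k > j$.

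\textbf{Step 1 (Splitting).} If $j=1$, then by definition of $\mathcal{V}_1$ there exists $m\geq 1$ with $|c_{11}(\mt M)-1|\in 2^{-(m+1)}\zeta_0(1,2)$ and $\eps_0 := (1+\delta^2) - c_{22}(\mt M) \geq 7\cdot 2^{-(m+3)}\zeta_0$. Writing $\mt M = \mt R\,\mt F_2(\mu,\lambda)$ via Proposition \ref{prop:Kqc}, I apply Lemma \ref{lemma3} with some $\eps\in 2^{-(m+3)}\zeta_0(5,7)\subset (0,\eps_0]$. This yields rank-one connected, determinant-one matrices $\mt A = \mt R\,\mt F_2(\mu^\ast,\lambda)$, $\mt B = \mt R\,\mt F_2(1-\mu^\ast,\lambda)$ with $\mt M = \rho\mt A+(1-\rho)\mt B$, and, since Lemma \ref{lemma3} preserves $c_{11}$, both endpoints lie in $\mathcal{V}_{2m+1}$, so I set $k := 2m+1\geq 3$. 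If instead $j=0$, I use the representation $\mt M = \mt R\,\mt F_1(\mu,\lambda)$, use that $\mt M\in\inter K^{qc}$ forces $\eps_0 := 1 - c_{11}(\mt M) > 0$, pick $m\geq 1$ large enough that $2^{-(m+1)}\zeta_0(1,2)\subset(0,\eps_0]$, and apply Lemma \ref{lemma2} with $\eps$ in this band. Since $c_{22}$ is unchanged and $c_{11}$ now lies in the required dyadic band, $\mt A, \mt B \in \mathcal{V}_{2m+1}\cup\mathcal{V}_1$, so $k \geq 1 > 0$.

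\textbf{Step 2 (Diamond construction).} With $\mt A,\mt B,\mt C := \mt M$ as above I invoke Lemma \ref{lem:Conti} at some scale $h\in(0,8^{-1})$, obtaining the diamond $\Omega_{h,\mt R'}$ (with $\mt R'$ dictated by the rank-one direction of $\mt A - \mt B$) and a piecewise affine $\vc u$ with boundary data $\mt M\vc x$, $\det\nabla\vc u\equiv 1$, and five deformation-gradient values arranged in four pairs of triangles plus a central parallelogram, matching the color decomposition in the figure accompanying Lemma \ref{lem:Conti}. Property (ii) is immediate; (iii) follows after splitting the central parallelogram along a diagonal, yielding ten triangles in total. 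For (iv), each such triangle is inscribed in $\Omega_{h,\mt R'}$, hence its perimeter is at most $\Per(\Omega_{h,\mt R'})$, and summation over the ten triangles gives the stated bound.

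\textbf{Step 3 (Keeping $\nabla\vc u\in\mathcal{V}_k$).} The remaining condition (i) is where the scale $h$ enters. Lemma \ref{lem:Conti}(iv) gives
\begin{equation*}
\min\{|\nabla\vc u - \mt A|,\,|\nabla\vc u - \mt B|\} \leq C_1 |\mt A-\mt B|\min\{\rho,1-\rho\}\,h
\end{equation*}
almost everywhere. Since $\mathcal{V}_k$ is relatively open in $\inter K^{qc}$, both $\mt A$ and $\mt B$ lie in its relative interior, and $\nabla\vc u$ already satisfies the determinant constraint $\det=1$. Using local Lipschitz continuity of the Cauchy-Green map to convert matrix-space proximity into Cauchy-Green bands, I may shrink $h$ (depending on $\mt M$) until the above error keeps $\nabla\vc u$ inside $\mathcal{V}_k$ a.e., which gives (i).

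\textbf{Main obstacle.} The one delicate point is the matching between the Cauchy-Green bands defining $\mathcal{V}_k$ and the matrix-space error produced by the interpolation layers $\mt G,\mt H$ of the Conti construction. In the present lemma $h$ is permitted to depend on $\mt M$, so qualitative openness of $\mathcal{V}_k$ is enough; the quantitative analogue, where $h$ must be controlled uniformly as $k\to\infty$ and where the ``self-improving'' Lipschitz estimates \eqref{eq:control} and \eqref{eq:distF} are decisive, is the content of Lemma \ref{lem:diamond} treating the levels $j\geq 2$.
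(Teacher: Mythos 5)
Your proposal follows the same strategy as the paper: use Lemma \ref{lemma2} or Lemma \ref{lemma3} to split $\mt M$ into two rank-one connected endpoints in a better set of the in-approximation, apply Lemma \ref{lem:Conti} in a diamond, and shrink $h$ (in an $\mt M$-dependent way) so that the Conti error, converted to Cauchy-Green space via the Lipschitz bound \eqref{eq:LipC}, keeps $\nabla\vc u$ inside the target $\mathcal{V}_k$. Step 2 (structure of level sets and the perimeter count) and Step 3 (qualitative use of the positive tolerance) are correct, and the case $j=1$ is handled exactly as in the paper.

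There is, however, a genuine gap in Step 1 for the case $j=0$. You choose $m$ only so that $2^{-(m+1)}\zeta_0\cdot(1,2)\subset(0,\eps_0]$ with $\eps_0 = 1 - c_{11}(\mt M)$. This guarantees the applicability of Lemma \ref{lemma2}, i.e., that $\eps\leq\eps_0$, but says nothing about $c_{22}$. Since splitting along $\mt F_1$ leaves $c_{22}$ unchanged, after the split the endpoints have $|c_{11}-1|\in\zeta_0 2^{-(m+1)}\cdot(1,2)$ and the same $c_{22}(\mt M)$; for them to lie in $\mathcal{V}_1$ you also need $|c_{22}(\mt M)-(1+\delta^2)|\geq 7\cdot 2^{-(m+3)}\zeta_0$, and for $\mathcal{V}_{2m}$ or $\mathcal{V}_{2m+1}$ you need $|c_{22}(\mt M)-(1+\delta^2)|$ to land in a specific dyadic band. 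If $\mt M\in\mathcal{V}_0$ with $c_{22}(\mt M)$ extremely close to $1+\delta^2$ while $c_{11}(\mt M)$ is well away from $1$ (so $\eps_0$ is not small), your criterion permits a small $m$, and then $|c_{22}(\mt M)-(1+\delta^2)|<5\cdot 2^{-(m+3)}\zeta_0$ puts both endpoints back in $\mathcal{V}_0$, so $k=0\not> j$. Your claim ``$\mt A,\mt B\in\mathcal{V}_{2m+1}\cup\mathcal{V}_1$'' is therefore not justified. The paper avoids this by choosing $m$ to exceed \emph{both} dyadic indices: it takes $m_1,m_2\in\Z$ with $|c_{11}(\mt M)-1|\in\zeta_0[2^{-(m_1+1)},2^{-m_1}]$, $|c_{22}(\mt M)-(1+\delta^2)|\in\zeta_0[2^{-(m_2+1)},2^{-m_2}]$ and sets $m$ as the smallest positive integer strictly greater than $\max\{m_1,m_2\}$, which simultaneously gives $\eps=\tfrac34\zeta_0 2^{-m}\leq\eps_0$ and $|c_{22}(\mt M)-(1+\delta^2)|\geq\zeta_0 2^{-m}>7\cdot 2^{-(m+3)}\zeta_0$. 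The fix to your argument is to additionally impose $7\cdot 2^{-(m+3)}\zeta_0\leq|c_{22}(\mt M)-(1+\delta^2)|$ when choosing $m$.
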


\begin{proof}

We split the proof into two cases:

\textit{Case $j=1$. } Let $m \geq 1$ be the integer such that $\left|\mathsf{C}(\mathsf{F}) \mathbf{e}_1 \cdot \mathbf{e}_1 - 1 \right|\in 2^{-(m+1)}\zeta_0 \cdot (1,2)$.
In this case we move from $\mathcal{V}_1$ into $\mathcal{V}_{2m+1}$ by improving on the $\mt C(\mt F) \vc e_2 \cdot \vc e_2$ component. More precisely, setting $\mt F = \mt R \mt F_2(\mu, \lambda)$ with $\mt R \in SO(2)$, $\mu, \lambda \in (0,1)$, using Lemma \ref{lemma3}, we split
\begin{align*}
\mt F = \mt R \mt F_2(\mu, \lambda)
= \rho \mt R\mt F_2(\mu^{\ast},\lambda) + (1-\rho) \mt R\mt F_2(1-\mu^{\ast}, \lambda)
\end{align*}
with $\rho, \lambda, \mu \in (0,1)$ and where
\begin{align*}
\mt C(\mt F_2(\mu^{\ast},\lambda)) \vc e_2 \cdot \vc e_2 = (1+\delta^2) - \frac{3}{4} \zeta_0 2^{-m}.
\end{align*}
We recall that along such a splitting using the ``$\mt F_2$ coordinates'' the $\mt C(\mt F_2) \vc e_1 \cdot \vc e_1$ component remains unchanged, that is 
$$
\mt C(\mt F_2(\mu^{\ast},\lambda)) \vc e_1 \cdot \vc e_1=\mt C(\mt F) \vc e_1 \cdot \vc e_1 = \mt C(\mt F_2(1-\mu^{\ast},\lambda))\vc e_1 \cdot \vc e_1.
$$ 
In order to obtain a suitable error estimate in the construction of Lemma \ref{lem:Conti}, we now notice that, as $K^{qc}$ is a bounded compact set, and as $\mt C(\mt F)$ is Lipschitz in $\mt F$ in any bounded set, there exists $C_L\geq 1$ depending on $\delta$ only such that
\beq
\label{eq:LipC}
|\mt C(\mt A)-\mt C(\mt B)|\leq C_L |\mt A-\mt B|, \eeq
for all $\mt A,\mt B\in K^{qc}.$ 
We choose $h>0$ (depending on $\mt M$) so small that 
\begin{itemize}
\item for the error estimate in Lemma \ref{lem:Conti} (iv) and the constant $C_L>0$ from \eqref{eq:LipC} we have 
\begin{align*}
e:= C_1 |\mt F_2(1-\mu^{\ast},\lambda)- \mt F_2(\mu^{\ast},\lambda)| \min\{\rho, 1-\rho\}h_1 < \zeta_0  \frac{\min\{1,\beta\}}{C_L 2^{m+3}},
\end{align*}
where 
\begin{align*}
\beta:= \min\left\{\left|1+\delta^2 - 2^{-m}\zeta_0 - \mt C(\mt F) \vc e_1 \cdot \vc e_1 \right|,\left|1+\delta^2 - 2^{-m+1}\zeta_0 -\mt C(\mt F) \vc e_1 \cdot \vc e_1 \right|\right\},
\end{align*}
and where $C_1$ is as in Lemma \ref{lem:Conti},
\item for the deformation gradient $\nabla \vc u$ from Lemma \ref{lem:Conti} we have 
\begin{align*}
|\mt C(\nabla \vc u) \vc e_1 \cdot \vc e_1 - 1| \in 2^{-(m+1)} \zeta_0 \cdot (1,2).
\end{align*}
\end{itemize}
Using this error bound together with the estimate (iv) in Lemma \ref{lem:Conti} and \eqref{eq:LipC} gives a diamond shape domain $\Omega_{h,\mt R}$ and a map $\vc u:\Omega_{h,\mt R}\to \R^2$ such that
$$
|\mt C(\nabla \vc u) - \mt C(\mt  F_2(\mu^*,\lambda))| \leq \bar C |\nabla \vc u - \mt  F_2(\mu^*,\lambda)| \leq \zeta_0  \frac{\min\{1,\beta\}}{2^{m+2}},
$$
which implies that $\nabla \vc u(\vc x) \in \mathcal{V}_{2m+1}$ a.e. in $\Omega_{h,\mt R}$. The remaining properties (ii)-(iv) are direct consequences of the corresponding properties of the construction from Lemma \ref{lem:Conti}.

\textit{Case $j=0$. } In this case there exist $m_1,m_2\in \Z$ such that
\begin{align*}
&|\mt C(\mt F) \vc e_1 \cdot \vc e_1 - 1| \in \zeta_0 [2^{-(m_1+1)}, 2^{-m_1}],\\
&|\mt C(\mt F) \vc e_2 \cdot \vc e_2 - (1+\delta^2)| \in \zeta_0 [2^{-(m_2+1)}, 2^{-m_2}].
\end{align*}
We set $\bar{m}= \max\{m_1,m_2\}$ and let $m \geq 1$ be defined as the smallest positive integer strictly larger than $\bar{m}$. We then carry out a splitting improving $\mt C(\mt F) \vc e_1 \cdot \vc e_1$. More precisely, we set $\epsilon = \frac{3}{4}\zeta_0 2^{-m}$ and invoke Lemma \ref{lemma2} to obtain
\begin{align*}
\mt F = \mt R \mt F_1(\mu, \lambda)
= \rho \mt F_1(\mu^{\ast}, \lambda) + (1-\rho) \mt F_1(1-\mu^{\ast},\lambda)
\end{align*}
such that $\mt C(\mt F_1) \vc e_1 \cdot \vc e_1 = \mt C(\mt F_1) \vc e_1 \cdot \vc e_1 = 1-\epsilon$. Invoking the replacement construction from Lemma \ref{lem:Conti} and choosing the error $e$ (depending on $\mt F$) sufficiently small (cf. case $j=1$) then yields a new deformation such that $\nabla \vc u \in \mathcal{V}_1$.
\end{proof}

We emphasize that a key requirement in the replacement construction of Lemma \ref{lem:diamond_replace} is to ensure that $\nabla \vc u \in \mathcal{V}_{l}$ with $l>j$ a.e. in $\Omega_{h, \mt R}$. Relying on the replacement construction of Lemma \ref{lem:Conti}, this necessitates that $h$ is allowed to depend on $\mt M$. In particular, this yields highly non-uniform estimates on the aspect ratios of the diamond-shaped domains $\Omega_{h, \mt R}$. This will also be reflected in our argument on general covering results (see Section \ref{sec:Covering}).
\medskip

If $\mt M \in \mathcal{V}_k$ with $k\geq 2$, we infer a more detailed result. In particular, on the one hand, the bounds in (iv) will be crucial in obtaining strong geometric $L^1$ convergence. On the other hand, the possibility of choosing $h_0$ \emph{uniformly}, independently of $\mt M$, will allow us to obtain exponential (instead of super-exponential) $BV$ estimates.

\begin{lem}
\label{lem:diamond}
There is a universal constant $h_0\in (0,8^{-1})$ such that for any $\mt M \in \mathcal{V}_k$ with $k\geq 2$ there exist
\begin{itemize}
\item a diamond-shaped domain $\Omega_{h_0, \mt R}$,
\item a piecewise affine deformation $\vc u: \Omega_{h_0, \mt R} \rightarrow \R^2$,
\item a domain $\Omega_{h_0, \mt R}^{\star}\subset \Omega_{h_0, \mt R} $ consisting of a union of level sets of $\nabla \vc u$,
\item constants $v_0 \in (0,1)$ and $c\geq 1$ independent of $\mt M$ and $h_0$ with $c v_0  \in (0,1)$, 
\end{itemize}
such that
\begin{itemize}
\item[(i)] $\nabla \vc u(\vc x) \in \mathcal{V}_{k+1}$ for almost every $\vc x \in \Omega_{h_0, \mt R}$,
\item[(ii)] $\vc u( \vc x) = \mt M \vc x $ on $\partial \Omega_{h_0, \mt R}$,
\item[(iii)] (up sets of measure zero) the level sets of $\nabla \vc u$ consist of 10 open triangles,
\item[(iv)] $|\nabla \vc u - \mt M| \leq c 2^{-\left\lfloor \frac{k}{2}\right\rfloor}$ on $\Omega_{h_0, \mt R}^{\star}$ and 
$|\Omega_{h_0, \mt R}^{\star}| \geq (1-c v_0^{\left\lfloor \frac{k}{2}\right\rfloor}) |\Omega_{h_0, \mt R}|$,
\item[(v)] if $T_1,\dots, T_{10}$ denote the open triangles forming the level sets of $\nabla \vc u$, then
\begin{align*}
\sum\limits_{j=1}^{10} \Per(T_j) \leq 20 \Per(\Omega_{h_0, \mt R}).
\end{align*}
\end{itemize}
\end{lem}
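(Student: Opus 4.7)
The plan is to combine the rank-one splittings from Lemmas \ref{lemma2}--\ref{lemma3} with the quantitative replacement of Lemma \ref{lem:Conti}. The crucial observation is that the ``small mass'' $\min\{\rho,1-\rho\}$ produced by the splittings is of order $2^{-\lfloor k/2\rfloor}$, which exactly compensates the loss in Lemma \ref{lem:Conti}(iv). This is the self-improvement mechanism flagged in Remark \ref{rmk:Lip_dep}, and it will allow the aspect ratio $h_0$ to be chosen \emph{universally}, independently of $k$ and $\mt M$.

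Write $\mt M = \mt R\mt F_j(\mu,\lambda)$ with $j=2$ when $k=2k_0$ is even and $j=1$ when $k=2k_0+1$ is odd; I focus on the even case since the odd one is completely analogous after interchanging $\vc e_1$ and $\vc e_2$. Because $\mt M \in \mathcal{V}_{2k_0}$, the parameter $\eps_0 = 1+\delta^2 - \mt C(\mt M)\vc e_2\cdot\vc e_2$ lies in $\zeta_0 2^{-k_0}(1,2)$ and $\lambda(1-\lambda) = \frac{\delta^2+1}{4\delta^2}\bigl(1 - \mt C(\mt M)\vc e_1\cdot\vc e_1\bigr)$ is small, so the hypotheses of Lemma \ref{lemma3} are met. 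Applying it with $\eps = \frac{3}{4}\zeta_0 2^{-k_0}\in(0,\eps_0]$ yields
\begin{equation*}
\mt M = \rho\,\mt R\mt F_2(\mu^\ast,\lambda) + (1-\rho)\,\mt R\mt F_2(1-\mu^\ast,\lambda),
\end{equation*}
with both matrices in $\mathcal{V}_{k+1}$ by the very design of Definition \ref{defi:in_approx} (this is exactly the computation in the proof of Lemma \ref{lem:in_approx}). Estimates \eqref{eq:control2}--\eqref{eq:distF2} then give $\min\{\rho,1-\rho\} \leq C_1\eps_0 \leq 2C_1\zeta_0 2^{-k_0}$ together with $|\mt M - \mt B_{\mathrm{maj}}|\leq C_1\eps_0$, where $\mt B_{\mathrm{maj}}$ denotes the matrix in the decomposition associated with the larger of $\rho, 1-\rho$.

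Next I feed $\mt A := \mt R\mt F_2(\mu^\ast,\lambda)$, $\mt B := \mt R\mt F_2(1-\mu^\ast,\lambda)$, $\mt C := \mt M$ into Lemma \ref{lem:Conti} at a universal $h_0 \in (0,8^{-1})$ to be fixed. All hypotheses are satisfied since $\mt A,\mt B,\mt M$ have unit determinant and $\rank(\mt A - \mt B)\leq 1$ by Lemma \ref{lemma3}. This produces a diamond $\Omega_{h_0,\mt R'}$ for some $\mt R' \in SO(2)$ and a piecewise affine $\vc u$ with the correct affine boundary data and exactly ten open triangular level sets, delivering (ii), (iii) and (v) directly. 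For (i), the Lipschitz bound \eqref{eq:LipC} combined with Lemma \ref{lem:Conti}(iv) yields
\begin{equation*}
\min\bigl\{|\mt C(\nabla\vc u) - \mt C(\mt A)|,\,|\mt C(\nabla\vc u) - \mt C(\mt B)|\bigr\} \leq C_L C_1|\mt A-\mt B|\min\{\rho,1-\rho\}\,h_0 \leq C''\,h_0\,\zeta_0\,2^{-k_0}
\end{equation*}
a.e., where $C''$ depends only on $\delta$ since $|\mt A - \mt B|$ is bounded uniformly on $K^{qc}$. Because $\mathcal{V}_{k+1}$ is defined by Cauchy--Green windows of half-width comparable to $\zeta_0 2^{-k_0-3}$ centred on the common coordinates of $\mt A$ and $\mt B$, choosing $h_0$ as a sufficiently small universal constant (depending only on $\delta$) forces $\mt C(\nabla\vc u)$ into the correct dyadic window a.e., whence $\nabla\vc u\in\mathcal{V}_{k+1}$ a.e.

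Finally, for (iv), after relabelling if necessary I may assume $\rho \leq 1-\rho$, so that $\mt B = \mt B_{\mathrm{maj}}$ and $|\mt B - \mt M|\leq C_1\eps_0 \leq c\,2^{-\lfloor k/2\rfloor}$. I let $\Omega^\ast_{h_0,\mt R'}$ be the union of the level sets of $\nabla\vc u$ on which $\nabla\vc u$ is closer to $\mt B$ than to $\mt A$. By Lemma \ref{lem:Conti}(vi) its volume fraction is at least $1 - \rho(1+(1-\rho)h_0) \geq 1 - 2\rho \geq 1 - c\,2^{-\lfloor k/2\rfloor}$, which is the required bound with $v_0 = 1/2$. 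On $\Omega^\ast_{h_0,\mt R'}$ one has $|\nabla\vc u - \mt B| \leq e \leq c\,2^{-\lfloor k/2\rfloor}$ from Lemma \ref{lem:Conti}(iv), and the triangle inequality closes (iv). The main obstacle is precisely the universality of $h_0$: it hinges on the exact matching of the $2^{-\lfloor k/2\rfloor}$ factor in $\min\{\rho,1-\rho\}$ with the width of the dyadic windows defining $\mathcal{V}_{k+1}$, a matching that is unavailable in the $k\in\{0,1\}$ regime of Lemma \ref{lem:diamond_replace} and which is the reason $h_0$ there had to depend on $\mt M$.
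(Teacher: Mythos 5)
Your proposal is correct and follows essentially the same path as the paper's proof: split $\mt M$ with Lemma \ref{lemma3} (even $k$) or Lemma \ref{lemma2} (odd $k$) at $\eps = \tfrac34\zeta_0 2^{-\lfloor k/2\rfloor}$, feed the endpoints into the quantitative replacement of Lemma \ref{lem:Conti} at a $\delta$-dependent but $k$-independent $h_0$, and use the Lipschitz bound \eqref{eq:LipC} plus the comparable size of $\min\{\rho,1-\rho\}$ and the dyadic window width to conclude (i), reading off (ii)--(v) from Lemma \ref{lem:Conti}. Your normalisation $\rho\le 1-\rho$ (in place of the paper's $\chi=1$, $\mu<\tfrac12$) is a cosmetic simplification that makes the application of Lemma \ref{lem:Conti}(vi) slightly more transparent; otherwise the two arguments coincide.
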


\begin{proof}
 We first assume that $k\geq 2$ is even, and we denote $\bar{k}=\frac{k}{2}$.
Let $\eps:= \frac{3}{4}2^{-\bar{k}}\zeta_0$.
Given $\mt M \in \mathcal{V}_k$, by Lemma \ref{lemma3}
there exists $\mt Q \in SO(2)$ with $\mt M = \mt Q \mt F_2(\mu, \lambda)$ and
\begin{align*}
\mt M = \rho \mt Q \mt F_2(\mu^{\ast},\lambda) + (1-\rho) \mt Q \mt F_2(1-\mu^{\ast},\lambda)
\end{align*}
such that
\begin{align*}
\mt C_2(\mu^{\ast},\lambda) \vc e_2 \cdot \vc e_2 
= \mt C_2(1-\mu^{\ast},\lambda) \vc e_2 \cdot \vc e_2
= 1+\delta^2 - \eps. 
\end{align*}
In particular, by our choice of $\eps$, we obtain that
 for $\mt N \in \{\mt C( \mt F_2(\mu^{\ast}, \lambda)), \mt C( \mt F_2(1-\mu^{\ast},\lambda))\}$ we have
\begin{align*}
|\mt N_{11}-1| \in \zeta_0 \cdot (2^{-\bar{k}-1},2^{-\bar{k}}), \ \mt N_{22} = 1+\delta^2 - 3 \zeta_0 2^{-\bar{k}-2}.
\end{align*}
Since $\mt M \in \mathcal{V}_k$ and by the explicit expressions for the Cauchy-Green tensor \eqref{C2Mat}, we may assume that $\lambda(1-\lambda) \in (0,2^{-6})$ and 
\begin{align*}
\epsilon_1 := (1+\delta^2) - \mt C_2(\mu, \lambda) \vc e_2 \cdot \vc e_2 \in (0, 2^{-\bar{k} +1}\zeta_0 ) .
\end{align*}
Without loss of generality, we may further suppose that $\mu < \frac{1}{2}$, whence, by \eqref{eq:control}, we also infer that for a universal constant $\bar{C}_1 
\geq1$
\begin{align*}
\left|\rho- \frac{1}{2}(\chi + 1) \right|\leq \bar{C}_1 \epsilon_1
= 2 \bar{C}_1 \zeta_0 2^{-\bar{k}},
\end{align*}
where $\chi$ denotes the function from Lemma \ref{lemma3}.
Choosing 
\begin{align*}
h_0:= \left(64\delta(1+\delta)^2 C_1 \bar{C}_1 C_L\right)^{-1}> 0
\end{align*}
with $C_1 \geq 1$ being the constant from Lemma \ref{lem:Conti} (iv), $C_L$ being as in \eqref{eq:LipC}, and applying the construction from Lemma \ref{lem:Conti} then yields that the error in the replacement construction from Lemma \ref{lem:Conti} (iv) satisfies
\begin{align*}
e \leq C_1 |\mt F_2(\mu^{\ast},\lambda) - \mt F_2(1-\mu^{\ast},\lambda)| {\left|\rho- \frac{1}{2}(\chi + 1) \right|} h_0
\leq \frac{2^{-\bar{k}}\zeta_0}{16{C_L}}.
\end{align*}
Here we used also the fact that $|\mt F_2(\mu^{\ast},\lambda) - \mt F_2(1-\mu^{\ast},\lambda)| \leq 2\delta(1+\delta)^2$.
Thus, by property (iv) in Lemma \ref{lem:Conti} together with \eqref{eq:LipC}, we infer that $\nabla \vc u \in \mathcal{V}_{k+1}$ a.e. in $\Omega_{h_0,\mt R}$. The claimed properties (ii), (iii) and (v) then also follow from the corresponding statements from Lemma \ref{lem:Conti}. It remains to argue that (iv) holds. To this end, without loss of generality, we assume that $\chi=1$ and note that in 
\begin{align*}
\Omega_{\mt Q \mt F_2(\mu^{\ast},\lambda)}:= \{\vc x \in \Omega_{h_0, \mt R}: \  |\nabla \vc u(\vc x)- \mt Q \mt F_2(\mu^{\ast},\lambda)|<| \nabla \vc u(\vc x)- \mt Q \mt F_2(1-\mu^{\ast},\lambda)|\}
\end{align*}
we have 
\begin{align*}
|\mt M - \nabla \vc u(\vc x)|
&\leq |\mt M - \mt Q \mt F_2(\mu^{\ast},\lambda)| + |\mt Q \mt F_2(\mu^{\ast},\lambda) - \nabla \vc u(\vc x)|
\\
&\leq |\mt M - \mt Q \mt F_2(\mu^{\ast},\lambda)| + \frac{2^{-\bar{k}}\zeta_0}{8}
\\
&\leq (1-\rho)(|\mt F_2(\mu^{\ast},\lambda)| + |\mt F_2(1-\mu^{\ast},\lambda)|)+ \frac{2^{-\bar{k}}\zeta_0}{8}\\
&\leq c 2^{-\bar{k}},
\end{align*}
which proves the first part of the claim in (iv) with $\Omega_{h_0, \mt R}^{\star}:=\Omega_{\mt Q \mt F_2(\mu^{\ast},\lambda)}$. The claim on the volume fractions then follows from the first estimate in Lemma \ref{lem:Conti} (vi) with $\lambda = \rho$. Indeed,
$$
\rho\left( 1 + (1-\rho) h\right)\geq 1 - |\rho-1|\geq 1 - c v_0^k.
$$

If $k\geq 2$ is odd, then we may argue similarly as above where the splitting using the $\mt F_2$ coordinates is replaced by the splitting using the $\mt F_1$ coordinates (using Lemma \ref{lemma2} instead of Lemma \ref{lemma3} in order to improve the $\mt C(\mt F) \vc e_1 \cdot \vc e_1$ component). Since for any $\mt F \in \mathcal{V}_{k}$ (with $k\geq 2$ odd), the $\mt C(\mt F) \vc e_2 \cdot \vc e_2$ component is contained in $\zeta_02^{-\bar{k}-3} \cdot ( 5,7) \subset \zeta_0 \cdot (2^{-\bar{k}-1},2^{-\bar{k}})$, with $\bar{k}=\frac{k-1}{2}$, by choosing $h_0$ sufficiently small (but universal, cf. with case $k$ even) the replacement gradient from Lemma \ref{lem:Conti} is contained in $\mathcal{V}_{2k+1}$ a.e..
\end{proof}

\section{Covering}
\label{sec:Covering}
Having established the covering in the diamond-shaped domains from the previous section, we recall the two-dimensional covering algorithm with good perimeter bounds which had been introduced in \cite{RZZ18} (see Section 6.1 therein). Although the arguments leading to Lemma \ref{lem:A3} are generic properties of coverings in $\R^2$ and do not depend on the phase transformation under consideration, we recall their proof for self-containedness. 

We remark that the set of diamond-shaped domains is not ``closed'' with respect to iterating the construction: the level sets within the diamond-shaped domains which are covered in the next iteration step are triangles and not again diamond-shaped domains. Hence, any ``closed'' set with respect to the iteration of our algorithm has to contain the set of resulting triangles. As a consequence we define the following families of sets (see Figures \ref{fig:cover1}, \ref{fig:cover2}):

\begin{defi}
\label{defi:cover}
We denote by $\mathcal{C}$ the set of all possible open triangles. 
Let $\Omega_{h,\mt R}\subset\R^2$ be a rhombus with two orthogonal axes of length $2, 2h$ respectively, with its long axis pointing into the direction $\mt R\vc e_2$. Then, by $\mathcal{C}_{\mt R,h}$ we denote the set of open isosceles triangles of angles $\arctan \frac{1}{h},\arctan \frac{1}{h},\pi- 2\arctan \frac{1}{h}$, and which are symmetric about $\mt R\vc e_2$.
\end{defi}

We deal first with the case in which the data $\mt M$ (to be replaced with a deformation gradient closer to the wells) are not yet in a dyadic neighbourhood of the wells, i.e., where $\mt M \in \mathcal{V}_j$ with $j\in\{0,1\}$. 

\begin{lem}[Validity of condition (A3) in \cite{RZZ18}]
\label{lem:A3}
Let $\mt M\in \mathcal{V}_j$, $j\in\{0,1\}$ and let $T \in \mathcal C$ be an open triangle. Then there exist 
\begin{itemize}
\item a piecewise affine function $\vc u:T \rightarrow \R^2$ whose gradient only attains finitely many values and whose level sets consist of the triangular domains  $T_1,\dots, T_N$, 
\item a domain $T_g \subset T$ which is itself a union of finitely many open triangles (up to a set of measure zero),
\item and a constant $v\in(0,1)$, which is independent of $\mt M$,
\end{itemize}
such that
\begin{align}
\label{eq:improve}
\vc u( \vc x) &= \mt M \vc x \mbox{ on } \partial T,\
\nabla \vc u \in \mathcal{V}_{k} \mbox{ in } T_g,\
|T_g|\geq v |T|,
\end{align}
for some $k>j$. Moreover, there exists $\mt R\in SO(2), h\in(0,\frac18)$ (depending on $\mt M$) such that the following perimeter bounds hold: 
\begin{itemize}
\item[(i)]
If $T\in \mathcal{C}\setminus \mathcal{C}_{\mt R, h}$ there exists a constant $C_0=C_0(\mt M)>1$ which is independent of $T$ such that
\begin{align*}
\sum\limits_{\widetilde{T} \subset T_g, \ \widetilde{T} \in \{T_1,\dots,T_N\}} \Per(\widetilde{T}) \leq C_0 \Per(T).
\end{align*} 
\item[(ii)] If $T \in \mathcal{C}_{\mt R,h}$, then there exists a uniform constant $C_2>1$ (independent of $\mt M$ and $T$) such that
\begin{align*}
\sum\limits_{ \widetilde{T} \in \{T_1,\dots, T_N\}} \Per(\widetilde{T})
\leq C_2 \Per(T).
\end{align*}
\item[(iii)] If $T\in \mathcal{C}\setminus \mathcal{C}_{\mt R,h}$, then there exists an up to null-sets disjoint splitting of $T\setminus T_g:= T^{[1]}\cup T^{[2]}$ with $T^{[1]} \subset \mathcal{C}_{\mt R,h}$, $T^{[2]}\subset \mathcal{C} \setminus \mathcal{C}_{\mt R,h}$ such that
\begin{align*}
&\sum\limits_{\widetilde{T}\subset T^{[1]}, \ \widetilde{T} \in \{T_1,\dots,T_N\}} \Per(\widetilde{T}) \leq C_0 \Per(T),\\
& \sum\limits_{\widetilde{T} \subset T^{[2]}, \ \widetilde{T} \in \{T_1,\dots,T_N\}} \Per(\widetilde{T}) \leq C_2 \Per(T).
\end{align*}
Here $C_0, C_2>1$ are the constants from (i), (ii) respectively. In particular, $C_0$ may depend on $\mt M$, while $C_2$ is uniform.
\end{itemize}
\end{lem}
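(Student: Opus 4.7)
The plan is to use Lemma \ref{lem:diamond_replace} as the basic building block and to reduce the proof to a covering of $T$ by rescaled and translated copies of the diamond $\Omega_{h,\mt R}$ associated with $\mt M$, where $h = h(\mt M) \in (0, 1/8)$ and $\mt R = \mt R(\mt M) \in SO(2)$ are produced by that lemma. On each diamond of the cover, $\vc u$ will be defined by the correspondingly translated and rescaled copy of the replacement deformation from Lemma \ref{lem:diamond_replace}; on the leftover region inside $T$ I set $\vc u(\vc x) = \mt M \vc x$. Continuity of $\vc u$ and the boundary condition $\vc u = \mt M \vc x$ on $\partial T$ then follow directly from Lemma \ref{lem:diamond_replace}(ii), while the inclusion $\nabla \vc u \in \mathcal{V}_k$ on the union of the diamonds is Lemma \ref{lem:diamond_replace}(i). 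The good set $T_g$ will then be taken to be the union of the 10 triangular level sets of $\nabla \vc u$ (from Lemma \ref{lem:diamond_replace}(iii)) coming from each diamond in the cover, and the bound $|T_g| \geq v|T|$ will be the packing density of the cover.

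For case (ii), when $T \in \mathcal{C}_{\mt R,h}$, the triangle $T$ is similar to one of the two triangular halves of $\Omega_{h,\mt R}$. Here I would build a self-similar Vitali-type cover: inscribe the maximal rescaled diamond aligned with the $\mt R\vc e_2$ axis of symmetry of $T$, which leaves three residual triangles again belonging to $\mathcal{C}_{\mt R,h}$, and iterate on each residual finitely many times. Since all residuals and covering diamonds share the same similarity class, the packing density $v$ is a geometric constant independent of $\mt M$ and $h$, and a scaling application of Lemma \ref{lem:diamond_replace}(iv) combined with the standard fact that a Vitali cover by similar shapes has total perimeter comparable to that of the host yields $\sum_{\widetilde T} \Per(\widetilde T) \leq C_2 \Per(T)$ for a universal $C_2$.

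For case (i), when $T \in \mathcal{C}\setminus \mathcal{C}_{\mt R,h}$, the triangle $T$ is in general position relative to $\mt R$ and $h$. I would first tile $T$ by finitely many rescaled diamonds, all oriented along $\mt R\vc e_2$, placed so as to cover a definite fraction $v|T|$; the residual region $T \setminus T_g$ splits into two types of triangles. The perimeter estimate per covering diamond still follows from Lemma \ref{lem:diamond_replace}(iv), but when one sums and compares to $\Per(T)$ the efficiency of packing general-position triangles by diamonds of fixed aspect ratio $h$ enters, and the resulting constant $C_0 = C_0(\mt M)$ inevitably depends on $h$ and hence on $\mt M$. For case (iii), the same tiling can be arranged so that the residual decomposes cleanly into $T^{[1]}$ made of well-adapted residuals in $\mathcal{C}_{\mt R,h}$ (whose perimeter contribution is controlled uniformly by $C_2$ as in case (ii), since these are again in the similarity class of $\Omega_{h,\mt R}$) and $T^{[2]}$ made of corner/boundary residuals in general position (whose perimeter contribution absorbs the $\mt M$-dependent factor $C_0$).

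The main technical obstacle I anticipate is engineering the covering in cases (i) and (iii) so that two objectives are met simultaneously: the packed fraction is bounded below by a constant $v$ that is uniform in $\mt M$, and the residual region decomposes cleanly into the two classes so that the split perimeter bound in (iii) genuinely separates the universal contribution $C_2$ from the $\mt M$-dependent contribution $C_0$. Once this geometric covering is in place, the rest is bookkeeping: rescaling the perimeter bound of Lemma \ref{lem:diamond_replace}(iv) across the diamonds of the cover and summing. I also expect the decomposition of the leftover $T \setminus T_g$ into triangles to require choosing the maximal inscribed diamonds along the axis $\mt R \vc e_2$ in a Whitney-like fashion so that the three or four corner triangles created at each step are themselves affinely labelled as being in $\mathcal{C}_{\mt R,h}$ or $\mathcal{C}\setminus\mathcal{C}_{\mt R,h}$ in a manifestly finite and simple way.
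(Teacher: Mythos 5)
Your overall strategy matches the paper's: use Lemma \ref{lem:diamond_replace} as the building block, cover $T$ by rescaled copies of the diamond $\Omega_{h,\mt R}$, take $T_g$ to be the union of the replaced diamonds, and accept that $C_0$ must depend on $h(\mt M)$ while insisting that $v$ stay uniform. However, you have not actually carried out the covering construction that realizes this tension, and you explicitly flag exactly that construction as the ``main technical obstacle'' you ``anticipate.'' This is the content of the lemma, not a peripheral detail, so the proposal has a genuine gap.

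Concretely, the paper resolves the uniformity-of-$v$ versus $\mt M$-dependence-of-$C_0$ tension through a two-stage reduction that is absent from your plan. First (its Step 2), it covers a rectangle of the special shape $r\mt R[(0,nh)\times(0,1)]$ by stacking $n$ diamonds, which yields $n$ diamonds, $2(n-1)$ triangles in $\mathcal{C}_{\mt R,h}$, and only four ``corner'' triangles outside $\mathcal{C}_{\mt R,h}$; the perimeter of the level sets is then bounded by $Cn\Per(\bar R_r)$. Second (its Step 3), a generic right triangle is split into a rectangle of at least half the area plus two self-similar children, the rectangle is tiled by $m=\lfloor l_2/l_1\rfloor$ squares $R_S$, and inside each square one inserts a rotated, rescaled copy $r\tilde{\mt R}R_S$ containing a rectangle of Step 2 type with $n=\lfloor 1/h\rfloor$. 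Counting yields $v=2^{-5}$ (uniform), $C_2=42$ (uniform), and $C_0=10\lfloor 1/h\rfloor$ (hence $\mt M$-dependent through $h$), and also delivers the clean split of $T\setminus T_g$ into a $\mathcal{C}_{\mt R,h}$-part and a complementary part needed for claim (iii). Without this (or an equivalent) explicit two-stage covering, the claim that the packing fraction $v$ can be kept uniform while only $C_0$ degrades remains unjustified, and the decomposition $T\setminus T_g=T^{[1]}\cup T^{[2]}$ required in (iii) is not produced.

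Two smaller remarks. In case (ii) you do not need to ``iterate on each residual finitely many times'': inscribing a single suitably scaled diamond already covers half of $T$, giving $v=\tfrac12$, and extra iterations only complicate the bookkeeping. Also, your claim that inscribing the diamond leaves three residual triangles in $\mathcal{C}_{\mt R,h}$ does not match the paper's construction, which (by choosing the scale so that the diamond has half the area of $T$ and is aligned with the symmetry axis) leaves exactly two residual triangles, both in $\mathcal{C}_{\mt R,h}$; this suggests the precise geometric picture of the inscribed diamond has not been pinned down.
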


We remark that in Lemma \ref{lem:A3} the perimeter bounds are \emph{not} always uniform, since in order to move from $\mathcal{V}_0$ and $\mathcal{V}_1$ it might be necessary to use extremely degenerate building block diamond-shaped constructions as in Lemma \ref{lem:Conti}. This is taken into account in the cases (i) and (iii) above in which we allow for the constant $C_0>1$ to depend on $\mt M$. A crucial observation here is that in our convex integration algorithm (see Algorithm \ref{alg:convex_int}) this constant $C_0>1$ will only arise at most three times in our estimates, allowing us to conclude uniform exponential behaviour (independent of $\mt M$) for the $BV$ norms (see Lemma \ref{lem:BV}). 

\begin{figure}[t]
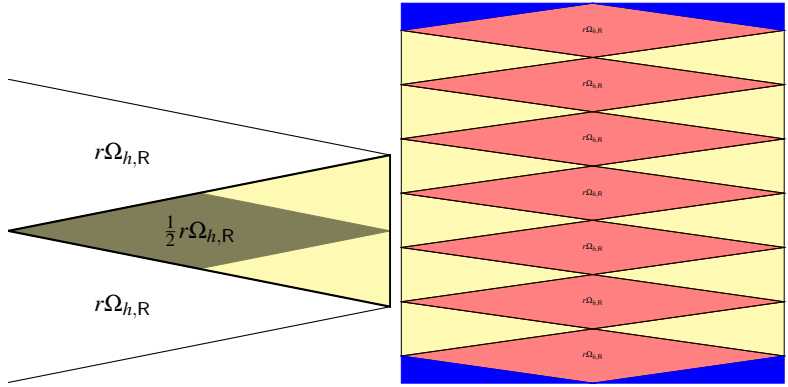

  \includegraphics[width=0.4\linewidth,page=4]{Figures.pdf}
  \includegraphics[width=0.4\linewidth,page=3]{Figures.pdf}
\caption{
\label{fig:cover1}
The covering described in Steps 1 and Steps 2 in the proof of Lemma \ref{lem:A3}. The left illustration shows the covering in one of the triangles $T \in \mathcal{C}_{h,\mt R}$ (yellow) in which a diamond (green) is included in a self-similar way giving rise to two remaining triangles which are again in $\mathcal{C}_{h, \mt R}$. In the right panel, the covering of the box $r \mt R[(0,nh)\times (0,1)]$ is visualized. It is covered by $n$ diamonds (red), $2n$ triangles in $C_{h, \mt R}$ (yellow) and four additional triangles (blue). 
}
\end{figure}

\begin{proof}
Given $\mt M\in K^{qc}$ we seek to repeatedly use the replacement construction given by Lemma \ref{lem:diamond_replace} to prove the claimed result. Therefore, throughout the proof, we denote by $\mt R$ and $\Omega_{h,\mt R}$ the rotation and rhombus given by Lemma \ref{lem:diamond_replace} (with boundary data $\mt M$), and by $\mathcal{C}_{\mt R,h}$ the family of related isosceles triangles given by Definition \ref{defi:cover} (with $h$ and $\mt R \in SO(2)$ as in Lemma \ref{lem:diamond_replace}).

We split the proof into three parts: first we discuss a replacement construction for triangles $T \in \mathcal{C}_{\mt R,h}$. Next we discuss an auxiliary covering construction in a special rectangular domain. Finally in the third step, we use this to deduce the covering for general triangles.\\ 

\emph{Step 1: Replacement construction for the triangles in $\mathcal{C}_{\mt R,h}$.}
Assuming that $T$ is an isosceles triangle in $\mathcal{C}_{\mt R,h}$, one can cover $T$ by a scaled version of the diamond-shaped domain $r \Omega_{h,\mt R}$, for some $r>0$, and two remaining triangles $\widetilde{T}_1$, $\widetilde{T}_2 \in \mathcal{C}_{\mt R,h}$ (see Figure \ref{fig:cover1}, left). It is possible to choose $r$ such that half of the area of $T$ is covered by $r \Omega_{h,\mt R}$.
Within the diamond $r \Omega_{h,\mt R}$, one can apply the construction from Lemma \ref{lem:Conti} and replace the affine deformation $\mt M \vc x + \vc b$ by the piecewise affine deformation $\vc u$ from Lemma \ref{lem:diamond_replace} which fixes the boundary conditions on $\partial (r \Omega_{h,\mt R})$. In $r \Omega_{h,\mt R}$, by the construction from Lemma \ref{lem:diamond_replace}, we obtain that $\nabla \vc u \in \mathcal{V}_{l}$ with $l>j$ a.e.. Moreover $|r \Omega_{h,\mt R}| = |T_g| \geq \frac12 |T|$.
Due to the preservation of the values of the deformation on $\partial(r \Omega_{h,\mt R})$, in the remaining two triangles $\widetilde{T}_1, \widetilde{T}_2$ we do not change $\vc u$.
As the level sets of $r \Omega_{h,\mt R}$ as well as the triangles $\widetilde{T}_1, \widetilde{T}_2$ have  a perimeter which is comparable to the perimeter of $T$, we infer the bounds in (ii). \\

\emph{Step 2: Replacement construction in a rectangle $\bar R_r = r\mt R[(0,nh)\times(0,1)]$ for some $n\in\mathbb{N}$, $r>0$, and where we recall that $h=h(\mt M)$.} Stacking $n$ diamonds from Lemma \ref{lem:diamond_replace} (see Figure \ref{fig:cover1}, right) allows us to cover $\bar R_r$ with $n$ many diamonds $r\Omega_{h,\mt R}$, $2(n-1)$ many isosceles triangles in $\mathcal{C}_{\mt R,h}$, and four remaining triangles.
In the diamonds $r\Omega_{h,\mt R}$ we apply the construction from Lemma \ref{lem:diamond_replace}, in the remaining sets we keep the deformation unchanged. Defining $T_g$ to be the union of the stacked sets $r \Omega_{h,\mt R}$, the properties of the construction from Lemma \ref{lem:diamond_replace} imply that \eqref{eq:improve} holds with $v=\frac12$. The perimeter of the covering level sets $T_1,\dots,T_N$ is controlled by
\begin{align*}
\sum\limits_{\widetilde{T} \in \{T_1,\dots,T_N\}} \Per(\widetilde{T})
\leq Cn \Per(\bar{R}_r),
\end{align*}
where $C>1$ is a constant which is independent of $\mt M$, $n$, $h$, $\mt R$.
\\

\emph{Step 3: Replacement construction in a generic triangle $T$ which is not in the class $\mathcal{C}_{\mt R,h}$.}
Without loss of generality, we may assume that the triangle is right-angled (else we draw an appropriate perpendicular line and split it into two triangles where the construction below can be applied). Then, we cover (up to a null set) the triangle by a rectangle $R$ of volume $|R| \geq \frac{1}{2}|T|$ and two self-similar triangles whose perimeter is smaller than $\Per(T)$. Denoting the lengths of the orthogonal sides of $T$ by $l_1,l_2$, with $0<l_1\leq l_2$, the rectangle $R$ then has sides of length $l_1/2,l_2/2$. We cover the rectangle $R$ with $m:=\left\lfloor \frac{l_2}{l_1} \right\rfloor$ axis parallel squares $R_S$ with sides of length $l_1/2$. At least half of  $R$ can be covered by the squares $R_S$ (see Figure \ref{fig:cover2}). Finally, at least  half of each square $R_S$ can be covered by $r \tilde{\mt R} R_S$, for some $r>0$ and where $\tilde{\mt R}\in SO(2)$ brings one side of $R_S$ into $\mt R\vc e_2.$ 
Now we can apply the construction of Step 2 to cover the rectangle $\frac{r l_1}2 \tilde{\mt R}[ \left(0,h \left\lfloor \frac1h \right\rfloor\right )\times(0,1)] \subset r \tilde{\mt R} R_S$ (that is at least half of $r \tilde{\mt R} R_S$), and the rest of $r \tilde{\mt R} R_s$ can be divided into two triangles whose  perimeter is bounded by the perimeter of $R_S$. 
Therefore, in each $R_S$ we have exactly $2\left(\left\lfloor \frac1h \right\rfloor -1 \right)$ new triangles in $\mathcal{C}_{\mt R,h}\cap (T\setminus T_g)$, ten new triangles in $(\mathcal{C}\setminus\mathcal C_{\mt R,h})\cap (T\setminus T_g),$ and $10\left\lfloor \frac1h \right\rfloor$ new triangles in $T_g.$ 
Again, the perimeter of each of these triangles is bounded by the perimeter of $R_S$. Since $m\Per(R_S)\leq 4\Per(R)$ we obtain the sought result with $C_0 = 10\left\lfloor \frac1h \right\rfloor$, $C_2 = 42$ and $v=2^{-5}$. 
\end{proof}

\begin{figure}[t]
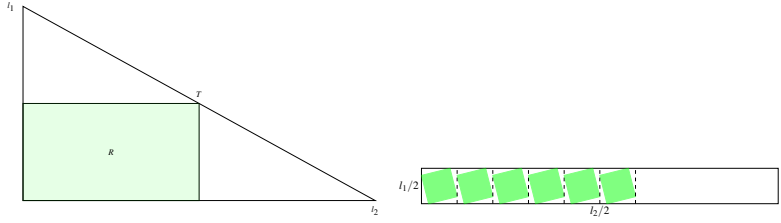

  \includegraphics[width=0.4\linewidth,page=1]{Figures.pdf}  \includegraphics[width=0.4\linewidth,page=2]{Figures.pdf}
\caption{
The covering used in Step 3 in the proof of Lemma \ref{lem:A3}. We split the right angled triangle $T$ into two self-similar triangles, and a rectangle $R$ of side lengths $l_1/2$ and $l_2/2$. This we cover by squares $R_{S}$ (of sight length $l_1/2$) which in turn are covered by rotated and rescaled versions of $R_S$, namely $r \tilde{\mt R} R_S$, which we fill (in at least half of the volume) with the construction from Step 2. 
}
\label{fig:cover2}
\end{figure}

We now consider the situation in which $\mt M \in \mathcal{V}_k$ with $k\geq 2$:

\begin{lem}[Validity of conditions (A4) and (A5) in \cite{RZZ18}]
\label{lem:A4}
Let $\mt M\in \mathcal{V}_k$, $k\geq 2$ and let $T \in\mathcal{C}$. Then there exist 
\begin{itemize}
\item a piecewise affine function $\vc u:T \rightarrow \R^2$ whose gradient only attains finitely many values and whose level sets consist of the triangular domains  $T_1,\dots, T_N$, 
\item domains $T_g, T_g^{\star} \subset T$ which are themselves a union of finitely many triangles,
\item and constants $v, v_0\in(0,1)$ and $C_3\geq 1$ with $C_3 v_0 \in (0,1)$, which are independent of $\mt M$ and $k\in \N$,
\end{itemize}
such that
\begin{align}
\label{eq:improve2}
\begin{split}
&\vc u( \vc x) = \mt M \vc x \mbox{ on } \partial T,\
\nabla \vc u \in \mathcal{V}_{k+1} \mbox{ in } T_g, \ \nabla \vc u = \mt M \mbox{ on } T \setminus T_g,\\
&|\nabla \vc u - \mt M| \leq C_3 2^{-{\left\lfloor \frac{k}{2}\right\rfloor}} \mbox{ in } T_g^{\star}, \\ 
&|T_g|\geq v |T|, \ |T_g^{\star}|\geq (1-C_3 {v_0}^{{\left\lfloor \frac{k}{2}\right\rfloor}}) |T_g|.
\end{split}
\end{align}
Moreover, the following perimeter bound holds for the new level sets: There exists a constant $C_1>0$ which is independent of $\mt M$ and $T$ such that
\begin{align*}
\sum\limits_{j=1}^{N} \Per(T_j) \leq C_1 \Per(T).
\end{align*}
\end{lem}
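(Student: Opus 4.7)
\medskip
\noindent\emph{Proof proposal.} The plan is to mimic the three-step covering from the proof of Lemma~\ref{lem:A3}, but to replace the $\mt M$-dependent diamond building block of Lemma~\ref{lem:diamond_replace} by the \emph{uniform} one from Lemma~\ref{lem:diamond}. The crucial point is that for $\mt M\in\mathcal{V}_k$, $k\geq 2$, Lemma~\ref{lem:diamond} provides a diamond $\Omega_{h_0,\mt R}$ with a \emph{universal} aspect ratio $h_0\in(0,8^{-1})$ (independent of $\mt M$ and $k$), producing a piecewise affine deformation with $\nabla\vc u\in\mathcal{V}_{k+1}$ a.e., with ten triangular level sets of total perimeter $\leq 20\,\Per(\Omega_{h_0,\mt R})$, and crucially with a distinguished subset $\Omega_{h_0,\mt R}^\star\subset\Omega_{h_0,\mt R}$ on which $|\nabla\vc u-\mt M|\leq c\,2^{-\lfloor k/2\rfloor}$ and $|\Omega_{h_0,\mt R}^\star|\geq (1-c v_0^{\lfloor k/2\rfloor})|\Omega_{h_0,\mt R}|$. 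Since $h_0$ is now universal, each perimeter/volume bound accumulated through the covering is itself universal.

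\textbf{Steps 1 and 2.} If $T\in\mathcal{C}_{\mt R,h_0}$ is isosceles of the right shape, inscribe a rescaled diamond $r\Omega_{h_0,\mt R}$ covering at least half of $|T|$, apply Lemma~\ref{lem:diamond} with boundary datum $\mt M$ inside it, and leave $\vc u(\vc x)=\mt M\vc x$ in the two remaining self-similar triangles (which lie again in $\mathcal{C}_{\mt R,h_0}$). If instead the region to fill is a rectangle of the form $\bar R_r=r\mt R[(0,nh_0)\times(0,1)]$, stack $n$ copies of $r\Omega_{h_0,\mt R}$ inside it, apply the diamond construction in each, and keep $\vc u=\mt M\vc x$ in the $2(n-1)$ isosceles fill-in triangles (which again belong to $\mathcal{C}_{\mt R,h_0}$) plus the four boundary triangles; the resulting total perimeter of level sets is $\leq C n\,\Per(\bar R_r)$ for a universal $C$, because $h_0$ is universal.

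\textbf{Step 3.} For a general triangle $T\in\mathcal{C}$, reduce (as in Step~3 of Lemma~\ref{lem:A3}) to a right-angled triangle, split off a rectangle $R$ with $|R|\geq |T|/2$ and two self-similar triangles (where we retain $\vc u=\mt M\vc x$), cover $R$ by $m=\lfloor l_2/l_1\rfloor$ axis-parallel squares $R_S$, inscribe a rotated-rescaled copy $r\tilde{\mt R}R_S$ of volume $\geq|R_S|/2$ in each, and fill half of each such copy by the rectangle construction of Step~2 (with $n=\lfloor 1/h_0\rfloor$), leaving $\vc u=\mt M\vc x$ in all residual triangles. Define $T_g$ as the union over the whole hierarchy of all diamonds $r\Omega_{h_0,\mt R}$ in which Lemma~\ref{lem:diamond} was invoked, and set $T_g^\star:=\bigcup_i r_i\Omega_{h_0,\mt R_i}^\star$. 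By the geometric sequence of volume fractions $\tfrac12\cdot\tfrac12\cdot\tfrac12$ one gets $|T_g|\geq v|T|$ with $v=2^{-5}$; outside $T_g$ the deformation is $\mt M\vc x$ by construction; and because $h_0$ and $n=\lfloor 1/h_0\rfloor$ are universal, the total number of triangular level sets generated at every level of the hierarchy is controlled by a universal multiple of the parent perimeter, giving the claimed universal bound $\sum_j\Per(T_j)\leq C_1\Per(T)$.

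\textbf{The new properties.} The bounds in \eqref{eq:improve2} that are absent from Lemma~\ref{lem:A3}, namely $|\nabla\vc u-\mt M|\leq C_3\,2^{-\lfloor k/2\rfloor}$ on $T_g^\star$ and $|T_g^\star|\geq(1-C_3 v_0^{\lfloor k/2\rfloor})|T_g|$, follow diamond-by-diamond: on every $r_i\Omega_{h_0,\mt R_i}^\star$ Lemma~\ref{lem:diamond}(iv) delivers both estimates with universal constants $c,v_0$, and since the diamonds are pairwise disjoint and exhaust $T_g$, summation over $i$ preserves the inequalities with $C_3=c$ and the same $v_0$. The principal obstacle one has to be careful about is precisely the \emph{uniformity} of all these constants in $\mt M\in\mathcal{V}_k$ and in $k$: without the universal $h_0$ of Lemma~\ref{lem:diamond} (which is the reason the hypothesis $k\geq 2$ enters) the stacking count $n=\lfloor 1/h_0\rfloor$ in Steps~2--3 would blow up and so would $C_1$; once Lemma~\ref{lem:diamond} is in hand the remaining work is bookkeeping of volumes and perimeters along the covering hierarchy.
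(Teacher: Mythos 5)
Your proposal is correct and follows essentially the same route as the paper: the paper's proof consists of exactly one paragraph stating that one repeats the three-step covering of Lemma~\ref{lem:A3} with Lemma~\ref{lem:diamond} (universal $h_0$) replacing Lemma~\ref{lem:diamond_replace}, defines $T_g^\star$ as the union of the $\Omega_{h_0,\mt R}^\star$ sets, and reads off $|T_g|\geq\tfrac{1}{32}|T|$ and the two new bounds from Lemma~\ref{lem:diamond}(iv). You spell out the bookkeeping that the paper delegates to the earlier proof, but there is no difference in idea.
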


We remark that in contrast to Lemma \ref{lem:A3} all estimates here are uniform in $\mt M$. This allows us to obtain uniform $L^1$ decay and $BV$ growth estimates in Section \ref{sec:Algorithm}.

\begin{proof}
The proof proceeds as the one from Lemma \ref{lem:A3}, however instead of invoking Lemma \ref{lem:diamond_replace} we now rely on Lemma  \ref{lem:diamond}. In particular, the ratio $h$ of the diamond shaped domains is now fixed (and independent of $\mt M$). So all constants appearing in the covering are uniform (depending only on $h>0$ which is uniform). The set $T_g^{\star}$ is defined as the union of the sets $\Omega_{h, \mt R}^{\star}$ from the associated diamond-shaped domains. As in the proof of Lemma \ref{lem:A3} we also obtain that $|T_g| \geq \frac{1}{32}|T|$. As a consequence, the estimates in the second and third line of \eqref{eq:improve2} then follow from the corresponding statements in Lemma \ref{lem:diamond_replace}.
\end{proof}

\section{The Convex Integration Algorithm}
\label{sec:Algorithm}

In this section we combine the previous observations and deduce the desired higher Sobolev regularity of our deformations. Here we follow the general scheme from \cite{RZZ18} and combine the general covering results from the previous section with the properties of the two-well problem.

\subsection{The convex integration algorithm}

For the convenience of the reader we begin by giving a rough outline of the convex integration algorithm from \cite{RZZ18}. In the present setting of the two-well problem, it is possible to slightly simplify the algorithm.

\begin{alg}
Let $\Omega \in \R^2$ satisfy \eqref{eqDomain} and $\mt M \in \inte (K^{qc})$. Let $v\in (0,1)$ be the minimum of the corresponding constants from Lemmas \ref{lem:A3} and \ref{lem:A4}. Then, we apply the following iterative modification scheme:
\label{alg:convex_int}
\begin{itemize}
\item[(1)] \emph{Data and initialization.}
For $k\geq 0$, $k\in \N$, we consider the following data:
\begin{itemize}
\item[(i)] A piecewise affine, uniformly bounded deformation $\vc u_k: \Omega \rightarrow \R^2$.
\item[(ii)] A finite collection of sets $\hat{\Omega}_k = \{\Omega^{k,1},\dots,\Omega^{k,j_k}\} \subset \mathcal{C}$ consisting of pairwise disjoint sets whose union is (up to null-sets) $\Omega$, and such that $\nabla \vc u_k|_{\Omega^{k,j}} = const$.
\item[(iii)] An index $\ell_k: \hat{\Omega}_k \rightarrow \N$ with the property that for $\Omega^{k,j}\in \hat{\Omega}_k$ it holds that $\nabla \vc u_k|_{\Omega^{k,j}} \in \mathcal{V}_{\ell_k(\Omega^{k,j})}$.
\end{itemize}
We then initialize our construction by defining
\begin{align*}
\vc u_0( \vc x) = \mt M \vc x \mbox{ in } \Omega, 
\end{align*}
$\hat{\Omega}_0 = \{\text{triangles that cover $\Omega$ as explained in the condition \eqref{eqDomain}}\}$ and $\ell_k(T) = \{k\in \N: \ \mt M \in \mathcal{V}_k\}$ for all $T \in \hat\Omega_0$.
\end{itemize}
\item[(2)] We prescribe the algorithm inductively. Let $k \in \N$ and a triple $(\vc u_k, \hat{\Omega}_k, \ell_k)$ be given. Then, if
\begin{align*}
\begin{array}{ll}
&\ell_k(\Omega^{k,j}) \in \{0,1\}, \mbox{ we apply to $\Omega^{k,j}$ the replacement construction from Lemma \ref{lem:A3}},\\
&\ell_k(\Omega^{k,j}) >1, \mbox{ we apply to $\Omega^{k,j}$ the replacement construction from Lemma \ref{lem:A4}}.
\end{array}
\end{align*}
In both cases Lemmas \ref{lem:A3} and \ref{lem:A4} return 
\begin{itemize}
\item a replacement construction $\vc w: \Omega^{k,j}\rightarrow \R^2$ which is piecewise affine and whose gradient has finitely many, (up to null sets) pairwise disjoint level sets $\tilde{\Omega}^{k+1,1}_j,\dots, \tilde{\Omega}^{k+1,r_j}_{j}$ which are all contained in $\mathcal{C}$ and whose union (up to null sets) is $\Omega^{k,j}$,
\item a subset $\Omega^{k,j}_g$ consisting of finitely many level sets of $\nabla \vc w$ such that  
\begin{align*}
\nabla \vc w \in \mathcal{V}_{\ell_k(\Omega^{k+1,j})+1} \mbox{ in } \Omega^{k,j}_g, \ \vc w(\vc x) = \vc u_k(\vc x) \mbox{ in } (\Omega^{k,j}\setminus \Omega^{k,j}_g) \cup \partial \Omega^{k,j}.
\end{align*}
We further define $\hat{\Omega}_{k+1,j}:=\{\tilde{\Omega}^{k+1,1}_j, \dots, \tilde{\Omega}^{k+1,r_j}_{j}\}$, 
\begin{align*}
&\hat{\Omega}_{k+1} = \bigcup\limits_{j=1}^{j_k} \hat{\Omega}_{k+1,j},\
\ell_{k+1}: \hat{\Omega}_{k+1} \rightarrow \N, \ \vc u_{k+1}|_{\tilde{\Omega}^{k+1,m}_j} = \vc w|_{\tilde{\Omega}^{k+1,m}_j},\\
&\ell_{k+1}(\tilde{\Omega}^{k+1,m}_j) =
\left\{
\begin{array}{ll}
\ell_{k}(\Omega^{k,j})+1, \mbox{ if } \tilde{\Omega}^{k+1,m}_j \subset \Omega^{k,j}_g,\\
\ell_{k}(\Omega^{k,j}) \mbox{ else}.
\end{array} \right.
\end{align*}
\end{itemize}
\end{alg}

By Lemma 3.4 in \cite{RZZ18} the algorithm is well-defined and the quantity $\ell_k$ is monotone increasing. With each point $\vc x\in \Omega$ and the deformation $\vc u_k$ we then associate a characteristic function:

\begin{defi}
\label{def chis}
Let $\Omega\subset\R^2$ satisfy \eqref{eqDomain}, and let $\vc u\colon\Omega\to\R^2$ be such that $\vc u\in W^{1,\infty}(\Omega;\R^2)$. In $\Omega$ we define $\chi_{\vc u}^1\colon\R^2\to\{0,1\}$ to be the indicator function on the set
\begin{align*}
\left\{\vc x \in \Omega: \
\dist\left(\nabla\vc u(\vc x),SO(2)\mt F_0\right) \leq \dist\left(\nabla\vc u (\vc x),SO(2)\mt F_0^{-1}\right)\right\}
\end{align*}
and extend it to be zero in $\R^n \setminus \overline{\Omega}$. We further define $\chi_{\vc u}^2=1-\chi_{\vc u}^1.$ If $\vc u = \vc u_k$, where $\vc u_k\colon\Omega\to\R^2$ is the mapping obtained at the $k-$th  iteration step of the Algorithm \ref{alg:convex_int}, we write $\chi_{k}^1,\chi_{k}^2$ rather than $\chi_{\vc u_k}^1,\chi_{\vc u_k}^2$.
\end{defi}

Controlling the $L^1$ and $BV$ norms of these characteristic functions along the iteration of the convex integration algorithm of Algorithm \ref{alg:convex_int} then allows to conclude the desired higher regularity statement for their limits.

\subsection{$L^1$ estimates}

Next we recall how the properties of the replacement constructions from Lemmas \ref{lem:A3}, \ref{lem:A4} together with the explicit convex integration algorithm yield strong exponential $L^1$ decay bounds:

\begin{prop}
\label{prop:L1}
Let $\Omega\subset\R^2$ satisfy \eqref{eqDomain}. Assume further that $\vc u_k\colon\Omega\to\R^2$ is the mapping obtained at the $k-$th  iteration step of the Algorithm \ref{alg:convex_int}. Then, there exists $C>1$, $\tilde c\in(0,1)$ independent of $k$ such that
\begin{align*}
\|\chi_k^i - \chi_{k+1}^i\|_{L^1(\Omega)}\leq C \tilde{c}^k|\Omega|,\ \
\| \nabla \vc u_{k+1}- \nabla \vc u_{k} \|_{L^1(\Omega)} \leq C \tilde{c}^k|\Omega|,
\end{align*}
for $i=1,2$.
\end{prop}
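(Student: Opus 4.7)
The plan is to deduce both exponential bounds simultaneously by tracking the area distribution of the levels $\ell_k$ across the partition $\hat\Omega_k$. Set $E_m^k := \bigcup\{\Omega^{k,j}\in\hat\Omega_k : \ell_k(\Omega^{k,j})=m\}$ and $p_m^k := |E_m^k|/|\Omega|$. Lemmas \ref{lem:A3} and \ref{lem:A4} guarantee that inside every piece the ``good'' subset $\Omega^{k,j}_g$ on which the level is promoted by one has relative area at least the uniform constant $v\in (0,1)$. Per-piece bookkeeping then gives, for any weight $a\in(0,1)$, that the contribution of a piece at level $n$ to the weighted mass $S_{k+1} := \sum_{m\geq 0}a^m p_m^{k+1}$ is bounded by $a^n|\Omega^{k,j}|(1-v(1-a))$; summing yields $S_{k+1} \leq (1-v(1-a))S_k$, so $S_k \leq \tilde c^k$ with $\tilde c := 1-v(1-a)\in(0,1)$ because $S_0 \leq 1$.

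Next, I would estimate $\|\nabla\vc u_{k+1}-\nabla\vc u_k\|_{L^1}$ piecewise, using that $\nabla\vc u_k$ stays in the compact set $K^{qc}$, so $|\nabla\vc u_{k+1}-\nabla\vc u_k|$ is bounded by a universal constant. On a piece $\Omega^{k,j}$ at level $m\geq 2$, Lemma \ref{lem:A4} splits $\Omega^{k,j}_g$ into the ``very good'' subset $\Omega^{k,j}_{g^\star}$ on which $|\nabla\vc u_{k+1}-\nabla\vc u_k|\leq C_3 2^{-\lfloor m/2\rfloor}$, plus a residual of relative measure $\leq C_3 v_0^{\lfloor m/2\rfloor}$, while outside $\Omega^{k,j}_g$ the deformation is untouched. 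The contribution of such a piece is therefore $\leq C(2^{-m/2} + v_0^{m/2})|\Omega^{k,j}|$, whereas the pieces at level $0$ or $1$ are handled by Lemma \ref{lem:A3} and the uniform $L^\infty$-bound, giving $\leq C|\Omega^{k,j}|$. Choosing $a := \max\{2^{-1/2}, v_0^{1/2}\}\in(0,1)$ and using $p_0^k+p_1^k \leq (1+a^{-1})S_k$, one sums over pieces and levels to arrive at $\|\nabla\vc u_{k+1}-\nabla\vc u_k\|_{L^1}\leq CS_k|\Omega|\leq C\tilde c^k|\Omega|$.

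For the characteristic functions I would exploit the strict separation of the two wells (Remark \ref{SeparationProperty}) together with the fact that, by Definition \ref{defi:in_approx}, membership in $\mathcal V_m$ controls $\mt C(\nabla\vc u_k)$ to within $O(2^{-m/2})$ of the Cauchy-Green image of $K$; the Lipschitz bound \eqref{eq:LipC} combined with a polar-decomposition argument then yields $\dist(\nabla\vc u_k, K)\leq C 2^{-m/2}$. Fix $m_0$ so that $C 2^{-m_0/2}$ is less than a quarter of the well separation; for every piece at level $\geq m_0$, both $\nabla\vc u_k$ and the values of $\nabla\vc u_{k+1}$ on $\Omega^{k,j}_{g^\star}$ then lie strictly closer to the same well, so $\chi_k^i \equiv \chi_{k+1}^i$ on $\Omega^{k,j}_{g^\star}$. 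The set where $\chi_k^i \neq \chi_{k+1}^i$ is consequently contained in $\bigcup_{j}(\Omega^{k,j}_g\setminus \Omega^{k,j}_{g^\star})$ (of measure $\leq C\sum_m v_0^{m/2}p_m^k|\Omega|\leq CS_k|\Omega|$) and in the union of pieces with $\ell_k<m_0$ (of measure $\leq a^{-m_0}S_k|\Omega|$), both dominated by $C\tilde c^k|\Omega|$.

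The main obstacle is the calibration of the weight $a$ so that the two competing error rates, amplitude $2^{-m/2}$ and residual volume $v_0^{m/2}$, are absorbed into a \emph{single} geometric factor $\tilde c^k$ uniform in $k$. This rests entirely on the uniform aspect ratio $h_0$ from Lemma \ref{lem:diamond}, which in turn stems from the Lipschitz dependence flagged in Remark \ref{rmk:Lip_dep}; without this self-improvement, the amplitude estimate would degrade to $O(1)$ and the decay would become super-exponential.
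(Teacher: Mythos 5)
Your proposal is correct and follows essentially the same strategy as the paper: both hinge on the weighted area sum indexed by level, which the paper writes as $\mathcal{E}_k=\sum_{\tilde\Omega\in\hat\Omega_k}2^{-\ell_k(\tilde\Omega)/2}|\tilde\Omega|$ and you write as $S_k=\sum_m a^m p_m^k$ (these coincide up to normalisation once you fix $a=2^{-1/2}$, and your choice $a=\max\{2^{-1/2},v_0^{1/2}\}$ is a harmless refinement). Where the paper invokes \cite[Lemma 4.5]{RZZ18} for the one-step contraction $\mathcal{E}_{k+1}\leq\tilde c\,\mathcal{E}_k$, you reprove it via the per-piece bookkeeping $a^{n}|\Omega^{k,j}|\mapsto a^{n}|\Omega^{k,j}|(1-v(1-a))$, which is exactly the content of that cited lemma and is valid because the good set always advances the level by at least one and $a<1$ only helps if it advances by more. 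The treatment of the characteristic functions via well separation, the split $\Omega_g=\Omega^\star_g\cup(\Omega_g\setminus\Omega^\star_g)$ from Lemma \ref{lem:A4}, and the uniform $L^\infty$ bound on $K^{qc}$ all match the paper's Step 1. One minor point you should be aware of: the passage from ``$\nabla\vc u_k\in\mathcal V_m$'' to ``$\dist(\nabla\vc u_k,K)\leq C2^{-m/2}$'' does not follow from \eqref{eq:LipC} alone, since that is a Lipschitz bound in the wrong direction; it needs the (locally Lipschitz) inverse of $\mt F\mapsto\mt F^T\mt F$ modulo $SO(2)$, i.e.\ exactly the polar-decomposition argument you allude to — the paper compresses this into the reference to Remark \ref{SeparationProperty}, so you are in good company.
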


\begin{proof}
We discuss the argument for the characteristic functions and the gradients simultaneously.

To this end, we first present an estimate in the $k-$th step of the algorithm for which it is enough to prove estimates for every $\tilde{\Omega}\in \hat{\Omega}_k$. Then, in a second step, we collect all the bounds to prove an $L^1-$estimate on $\Omega$. \\

\textit{Step 1: Local estimates.} Let $\tilde{\Omega}\in \hat{\Omega}_k.$ We first notice that by Lemma \ref{lem:A4} there exists $C>0$ such that
\beq
\label{step 1a}
|\nabla \vc u_{k+1}-\nabla \vc u_{k}|\leq C 2^{-\frac{\ell_k(\tilde{\Omega})}{2}}\text{ a.e. in $\tilde{\Omega}^{\star}_g$, \qquad\text{for any $k\geq 0$}. }   
\eeq
By virtue of the separateness of the wells (see Remark \ref{SeparationProperty}), we thus obtain that 
\begin{align}
\label{eq:step1a_1}
\chi^j_k =\chi^j_{k+1}\text{ a.e. in $\tilde{\Omega}^{\star}_g$, } \mbox{ whenever } \ell_k(\tilde{\Omega}) \geq k_0, \ j \in \{0,1\},
\end{align}
for some $k_0\in\mathbb{N},$ $k_0\geq 2$.
Therefore, Lemma \ref{lem:A4} together with \eqref{eq:step1a_1} yield
\begin{align*}
\|\chi_k^j-\chi_{k+1}^j\|_{L^1(\tilde{\Omega
})} 
= \|\chi_k^j-\chi_{k+1}^j\|_{L^1(\tilde{\Omega}_g)}  
= \|\chi_k^j-\chi_{k+1}^j\|_{L^1(\tilde{\Omega}_g\setminus\tilde{\Omega}_g^{\star})}
\leq |\tilde{\Omega}_g\setminus\tilde{\Omega}_g^{\star}|.
\end{align*}
Using once more Lemma \ref{lem:A4} we deduce that 
$$
|\tilde{\Omega}_g\setminus\tilde{\Omega}_g^{\star}|\leq C_3 2^{{-\frac{\ell_k(\tilde{\Omega})}{2}}}|\tilde{\Omega}|,
$$ 
that is
\beq
\label{eq2:step1}
\|\chi_k^j-\chi_{k+1}^j\|_{L^1(\tilde{\Omega
})}\leq C 2^{{-\frac{\ell_k(\tilde{\Omega})}{2}}}|\tilde{\Omega}|,
\eeq
provided $\ell_k(\tilde{\Omega})\geq k_0.$ We notice that, by taking $C = \max\{C_3,|\Omega|2^{{-\frac{k_0}{2}}}\}$, \eqref{eq2:step1} holds also when $\ell_k(\tilde{\Omega})< k_0.$\\

Similarly, \eqref{step 1a} together with the bounds from Lemma \ref{lem:A4} and the boundedness of $K^{qc}$ imply that 
\begin{align*}
\|\nabla \vc u_{k+1} - \nabla \vc u_k\|_{L^1(\tilde{\Omega})}
&\leq 
\|\nabla \vc u_{k+1} - \nabla \vc u_k\|_{L^1(\tilde{\Omega}_g)}\\
&\leq 
\|\nabla \vc u_{k+1} - \nabla \vc u_k\|_{L^1(\tilde{\Omega}_g \setminus \tilde{\Omega}_g^{\star})} + \|\nabla \vc u_{k+1} - \nabla \vc u_k \|_{L^1(\tilde{\Omega}_g^{\star})}\\
&\leq C  |\tilde{\Omega}_g \setminus \tilde{\Omega}_g^{\star}| + 2^{{-\frac{\ell_k(\tilde{\Omega})}{2}}} |\tilde{\Omega}_g^{\star}|\\
&\leq 
C 2^{{-\frac{\ell_k(\tilde{\Omega})}{2}}}|\tilde{\Omega}|,
\end{align*}
if $\ell_k(\tilde{\Omega}) \geq k_0$. Arguing similarly as above, we can then also extend this estimate to all values of $k\in \N$.\\

\textit{Step 2: Global estimate.} By applying repeatedly the estimate obtained in \eqref{eq2:step1} to every $\tilde{\Omega}\in\hat\Omega_k,$ we obtain 
\beq
\label{step2:eq1}
\|\chi_k^j-\chi_{k+1}^j\|_{L^1({\Omega
})}\leq C\sum_{\tilde\Omega\in\hat\Omega_k} 2^{{-\frac{\ell_k(\tilde{\Omega})}{2}}}|\tilde\Omega|.
\eeq
By arguing as in \cite[Lemma 4.5]{RZZ18} we deduce that 
$\mathcal{E}_k:=\sum_{\tilde\Omega\in\hat\Omega_k} 2^{{-\frac{\ell_k(\tilde{\Omega})}{2}}}|\tilde\Omega|$ satisfies
\beq
\label{step2:eq2}
\mathcal{E}_0 = |\Omega|,\qquad \mathcal{E}_{k+1}\leq \tilde{c}\mathcal{E}_k,
\eeq
for some $\tilde c\in(0,1)$ independent of $k$. Combining \eqref{step2:eq1}--\eqref{step2:eq2} we thus proved the claimed result for the characteristic functions.

For the difference of the gradients, we argue analogously and infer that
\begin{align}
\label{eq:grad_est}
\|\nabla \vc u_{k+1}- \nabla \vc u_k\|_{L^1(\Omega)}
\leq C \sum\limits_{\tilde{\Omega} \in \hat{\Omega}_k} 2^{{-\frac{\ell_k(\tilde{\Omega})}{2}}}|\tilde{\Omega}|.
\end{align}
Hence, combining \eqref{eq:grad_est} with \eqref{step2:eq2}, then implies the desired result.
\end{proof}

\subsection{$BV$ estimates}

In order to deduce the desired $W^{s,p}$ regularity of our deformation, we complement the $L^1$ estimates with $BV$ bounds. As the limiting deformations are not in BV in general, we expect $BV$ bounds which grow in $k$. Following \cite{RZZ18} we prove exponentially growing bounds:

\begin{lem}
\label{lem:BV}
Let $\Omega$ satisfy \eqref{eqDomain}. Let $\chi_k^1,\chi_k^2$ be as in Definition \ref{def chis}. Then, for $i\in \{1,2\}$
$$
| \chi_{k+1}^i-\chi_{k}^i|_{BV(\Omega)}  + | \nabla\vc u_{k+1}-\nabla\vc u_{k}|_{BV(\Omega)} \leq C \left(3\,\max\{C_1,C_2\} \right)^k|\Omega|,
$$
for some $C>1$, and where $C_1,C_2>1$ are as in Lemmas \ref{lem:A3} and \ref{lem:A4}.
\end{lem}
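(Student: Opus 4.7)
The approach is to reduce both BV bounds to a single estimate on the total perimeter of $\hat\Omega_{k+1}$, and then to bound this perimeter inductively via the replacement constructions of Lemmas \ref{lem:A3} and \ref{lem:A4}. First, since $\chi_{k+1}^i - \chi_k^i$ takes values in $\{-1,0,1\}$ and $|\nabla\vc u_{k+1} - \nabla\vc u_k| \leq 2\diam(K^{qc})$ (both gradients lying in the bounded set $K^{qc}$), and since both differences are piecewise constant on $\hat\Omega_{k+1}$, a standard argument controls each BV semi-norm by a universal constant times the total perimeter
\[
P_{k+1} := \sum_{\tilde\Omega \in \hat\Omega_{k+1}} \Per(\tilde\Omega).
\]

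Next, I would estimate $P_{k+1}$ recursively by inspecting the action of Algorithm \ref{alg:convex_int} on each $\tilde\Omega \in \hat\Omega_k$ separately. If $\ell_k(\tilde\Omega) \geq 2$, Lemma \ref{lem:A4} bounds the total perimeter of the new level sets by the uniform constant $C_1\Per(\tilde\Omega)$. If $\ell_k(\tilde\Omega) \in \{0,1\}$, Lemma \ref{lem:A3} is applied: using the refined decomposition (iii), $\tilde\Omega \setminus \tilde\Omega_g = \tilde\Omega^{[1]} \cup \tilde\Omega^{[2]}$, together with (i) applied to $\tilde\Omega_g$, the total perimeter of the new level sets is at most $(2C_0 + C_2)\Per(\tilde\Omega) \leq 3\max\{C_0, C_2\}\Per(\tilde\Omega)$, where $C_0$ may depend on the (constant) gradient value on $\tilde\Omega$. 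Summing over $\hat\Omega_k$ and using $P_0 \leq C|\Omega|$ from \eqref{eqDomain} then gives the na\"ive exponential bound
\[
P_{k+1} \leq 3\max\{C_0, C_1, C_2\}\,P_k \leq C\bigl(3\max\{C_0, C_1, C_2\}\bigr)^{k+1}|\Omega|.
\]

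The main subtlety, and the step where most care is required, is to remove the $\mt M$-dependent constant $C_0$ from the exponential base. Following the remark after Lemma \ref{lem:A3}, along any descendant line in the partition tree the non-uniform constant $C_0$ enters only a bounded (at most three) number of times: because $\ell_k$ is non-decreasing, Lemma \ref{lem:A3} is invoked only during the initial upgrades through $\mathcal{V}_0$ and $\mathcal{V}_1$, after which only Lemma \ref{lem:A4} with the uniform constant $C_1$ is used. The accumulated $C_0$-contribution is therefore bounded by $(3\max\{C_0, C_2\})^3$, a constant depending only on $\mt M$ which is absorbed into the prefactor $C$ of the statement. Combined with the reduction above, this yields $P_{k+1} \leq C(3\max\{C_1, C_2\})^{k+1}|\Omega|$ and hence the claim.

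The principal obstacle lies precisely in the third step: the combinatorial book-keeping needed to guarantee that the $\mt M$-dependent constant $C_0$ contributes only to the prefactor and not to the exponential growth rate. All other ingredients — the $L^\infty$ control of $\nabla\vc u_k$ by boundedness of $K^{qc}$, the piecewise-constant structure on $\hat\Omega_{k+1}$, and the per-triangle perimeter bounds — are direct consequences of the previously established lemmas.
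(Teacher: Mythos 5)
Your overall structure — reduce both $BV$ seminorms to the total perimeter $P_k := \sum_{\tilde\Omega\in\hat\Omega_k}\Per(\tilde\Omega)$ via the $L^\infty$ bounds and piecewise-constancy, obtain a na\"ive $(3\max\{C_0,C_1,C_2\})^k$ bound from the per-triangle perimeter estimates, and then absorb the $\mt M$-dependent $C_0$ into the prefactor by a counting argument — matches the paper's proof (which unrolls the same per-step estimate via the descendant tree of Definition \ref{def descendants} and a discount function $\delta_k$).

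However, your justification for the crucial step of removing $C_0$ is wrong as stated and, as you suspected, is where the proof actually requires care. You assert that ``because $\ell_k$ is non-decreasing, Lemma \ref{lem:A3} is invoked only during the initial upgrades through $\mathcal{V}_0$ and $\mathcal{V}_1$, after which only Lemma \ref{lem:A4} is used.'' That would mean Lemma \ref{lem:A3} is applied only finitely many times along every descendant line, which is \emph{false}: Lemma \ref{lem:A3} only improves $\nabla\vc u$ in $T_g$, which is at most a fixed fraction of $T$, so a descendant line that stays in $T\setminus T_g$ at every step keeps $\ell_k$ fixed in $\{0,1\}$ and hence keeps invoking Lemma \ref{lem:A3} indefinitely. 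Monotonicity of $\ell_k$ gives $\ell$ \emph{non-decreasing}, not eventually $\geq 2$.

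What actually bounds the accumulated $C_0$-contribution is the case distinction \emph{within} Lemma \ref{lem:A3}: the children landing in $T^{[2]}$ (case (iii), second part) are charged the \emph{uniform} constant $C_2$, and only the children landing in $T_g$ (case (i)) or $T^{[1]}$ (case (iii), first part) are charged $C_0$. A descendant that lands in $T_g$ strictly increases $\ell$; a descendant that lands in $T^{[1]}$ becomes a triangle in $\mathcal{C}_{\mt R,h}$, so at all subsequent steps case (ii) applies with the uniform constant $C_2$ until $\ell$ improves through the diamond. Tracking this finite state machine, one finds that along each descendant line the discount equals $1/C_0$ at most a bounded number of times (the paper, following Lemma~4.7 in \cite{RZZ18}, bounds this by three), while for all other steps the discount is $\geq \min\{1/C_1, 1/C_2\}$. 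It is this fine-grained accounting by sub-case — not by whether Lemma \ref{lem:A3} versus Lemma \ref{lem:A4} is used — that allows the $C_0^3$ factor to be absorbed into the prefactor. As written, your argument would not detect that descendant lines spending infinitely many steps in $T^{[2]}$ under Lemma \ref{lem:A3} are nonetheless harmless.
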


\begin{rmk}
In general, the constant $C>1$ depends on $\mt M$. It deteriorates as $\mt M$ approaches the boundary $\partial K^{qc}$.
\end{rmk}

In order to work with a concise notation, we recall the concept of a descendant of a domain:

\begin{defi}[Def. 3.3 in \cite{RZZ18}]
\label{def descendants}
Let $\tilde{\Omega}\in\hat\Omega_k$ for some $k\geq 0.$ Then we say that $\check{\Omega}\in\hat\Omega_l$ for some $l\geq k$ is a descendant of $\tilde{\Omega}$ if $\check{\Omega}\subset\tilde\Omega.$ We denote the set of all descendants of $\tilde{\Omega}$ by $\mathcal{D}(\tilde{\Omega})$.
\end{defi} 

We are now ready to prove Lemma \ref{lem:BV}:
\begin{proof}
We notice that, since 
\beq
\label{eq1:lem64}
\begin{split}
| \nabla\vc u_{k+1}-\nabla\vc u_{k}|_{BV(\Omega)} &\leq \| \nabla\vc u_{k+1}-\nabla\vc u_{k}\|_{L^\infty(\Omega)} \sum_{\tilde{\Omega}\in\hat\Omega_k}\Per(\tilde{\Omega}),\\
| \chi_{k+1}^i-\chi_{k}^i|_{BV(\Omega)} &\leq \sum_{\tilde{\Omega}\in\hat\Omega_k}\Per(\tilde{\Omega}),
\end{split}
\eeq
it is enough to prove an upper bound for $\sum_{\tilde{\Omega}\in\hat\Omega_k}\Per(\tilde{\Omega})$. To this aim, let $\tilde{\Omega}\in\hat\Omega_k$, we start by noticing that, thanks to Lemma \ref{lem:A3} and Lemma \ref{lem:A4} we have
$$
\sum_{\Omega'\in \mathcal{D}(\tilde{\Omega})\cap \hat\Omega_{k+1}} \delta_k(\tilde\Omega) \Per(\Omega')\leq 3 \Per(\tilde{\Omega}),
$$
where $\delta_k\colon\hat\Omega_k\to\left\{\frac{1}{C_0},\frac{1}{C_1},\frac{1}{C_2}\right\}$, and $C_0,C_1,C_2$ are as in Lemma \ref{lem:A3} and Lemma \ref{lem:A4}. As a consequence, 
\begin{align*}
\sum_{\tilde\Omega^{(1)}\in\hat\Omega_1}& \sum_{\tilde\Omega^{(2)}\in\hat\Omega_2\cap \mathcal D(\tilde\Omega^{(1)})}\dots \sum_{\tilde\Omega^{(k+1)}\in\hat\Omega_{k+1}\cap \mathcal D(\tilde\Omega^{(k)})} \delta_k(\tilde\Omega^{(1)})\dots \delta_k(\tilde\Omega^{(k)})\Per(\tilde{\Omega}^{(k+1)})
\\
&\leq 3^k \Per({\Omega}).
\end{align*}
Now, if 
$$
\min\left\{\delta_k(\tilde\Omega^{(k)})\colon {\tilde\Omega^{(k)}\in\hat\Omega_{k}\cap \mathcal D(\Omega)} \right\} \geq \min\left\{\frac{1}{C_1},\frac{1}{C_2}\right\},
$$
we have
\begin{align*}
\sum_{\tilde{\Omega}\in\hat\Omega_k}\Per(\tilde{\Omega}) 
&= \sum_{\tilde\Omega^{(1)}\in\hat\Omega_1} \sum_{\tilde\Omega^{(2)}\in\hat\Omega_2\cap \mathcal D(\tilde\Omega^{(1)})}\dots \sum_{\tilde\Omega^{(k+1)}\in\hat\Omega_{k+1}\cap \mathcal D(\tilde\Omega^{(k)})} \Per(\tilde{\Omega}^{(k+1)})
\\
&\leq \left(3\,{\max\{C_1,C_2\}}\right)^k\Per(\Omega).
\end{align*}
Otherwise, since $C_0$ may depend on the boundary condition $\mt M$, some extra work is needed. An argument as in the proof of Lemma 4.7 in \cite{RZZ18} allows one to prove that, given any sequence of sets $\tilde{\Omega}^{(k)},$ with $\tilde{\Omega}^{(k)} \in\hat\Omega_{k}\cap \mathcal D(\tilde\Omega^{(k-1)})$, there exists at most three indices $k_1,k_2,k_3\in\mathbb{N}$ such that $\delta_k(\tilde{\Omega}^{(k)}) = \frac{1}{C_0},$ for all other $k\neq k_1,k_2,k_3$ we have $\delta_k(\tilde{\Omega}^{(k)})\geq \min\left\{ \frac{1}{C_1},\frac{1}{C_2}\right\}$. Indeed, we observe that $\delta_k(\Omega_{j,k})=C_0$ only if we are in case (i) or in the first case of (iii) of Lemma \ref{lem:A3}. In the former case, we have improved $\nabla \vc u$ from being in $\mathcal{V}_{s}$ to being in $\mathcal{V}_{s+1}$ with $s\in \{0,1\}$. In the latter case, we will do so in the next step. As a consequence, 
$$
\sum_{\tilde{\Omega}\in\hat\Omega_k}\Per(\tilde{\Omega}) 
\leq C_0^{3}\left(3\,{\max\{C_1,C_2\}}\right)^{k-3}\Per(\Omega)
\leq C \left(3{\max\{C_1,C_2\}}\right)^{k} \Per(\Omega).
$$
Therefore, 
by the last identity together with \eqref{eq1:lem64} we conclude the proof of Lemma \ref{lem:BV}. 
\end{proof}

\subsection{$W^{s,p}$ bounds}
An important tool to deduce a bound on the $W^{s,p}(\Omega)$ norm for $\nabla \vc u$ is the following interpolation result (which is an adaptation of the estimates in \cite{CDDD03}, see Theorem 2, Corollary 2.1, Remark 2.2 in \cite{RZZ16} for the adaptation which we are using here):

\begin{prop}[\cite{CDDD03}]
\label{prop:Inter}
Let $u\in L^\infty(\R^n)\cap BV(\R^n)$, $n\in\mathbb{N}$, $\theta_0\in(0,1),$ $q\in(1,\infty)$ and $s>0$ such that $\theta_0 = sq$. Then,
$$
\|u\|_{W^{s,q}(\R^n)}\leq \|u\|_{L^{\infty}(\R^n)}^{1-\frac1q}\left(	\|u\|_{L^{1}(\R^n)}^{1-\theta_0}\|u\|_{BV(\R^n)}^{\theta_0}\right)^{\frac1q}.
$$ 
\end{prop}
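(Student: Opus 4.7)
The plan is to exploit the Gagliardo--Slobodeckij characterisation of the $W^{s,q}$ seminorm,
\begin{equation*}
[u]_{W^{s,q}(\R^n)}^q \sim \int_{\R^n} |h|^{-n-sq} \|\tau_h u - u\|_{L^q(\R^n)}^q \, dh,
\end{equation*}
with $\tau_h u(x):=u(x+h)$, and to split the $h$-integral at a threshold $R>0$ which will be optimised at the end. The $L^q$-norm of the translation difference is estimated by elementary log-convexity between $L^1$ and $L^\infty$,
\begin{equation*}
\|\tau_h u - u\|_{L^q}^q \leq \|\tau_h u - u\|_{L^\infty}^{q-1} \|\tau_h u - u\|_{L^1} \leq (2\|u\|_{L^\infty})^{q-1} \|\tau_h u - u\|_{L^1},
\end{equation*}
and the remaining $L^1$-factor is controlled in two different ways: for $|h|\leq R$ via the standard $BV$ translation bound $\|\tau_h u - u\|_{L^1}\leq |h|\,\|u\|_{BV}$, and for $|h|>R$ via the trivial inequality $\|\tau_h u - u\|_{L^1}\leq 2\|u\|_{L^1}$.

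Substituting these into the Gagliardo integral and using the hypothesis $\theta_0=sq\in(0,1)$ to ensure convergence of both radial integrals $\int_0^R r^{-sq}\,dr$ and $\int_R^\infty r^{-1-sq}\,dr$, one finds
\begin{equation*}
[u]_{W^{s,q}}^q \lesssim \|u\|_{L^\infty}^{q-1}\bigl(R^{1-sq}\|u\|_{BV} + R^{-sq}\|u\|_{L^1}\bigr).
\end{equation*}
Optimising in $R$ balances the two terms at $R\sim\|u\|_{L^1}/\|u\|_{BV}$ and gives
\begin{equation*}
[u]_{W^{s,q}}^q \lesssim \|u\|_{L^\infty}^{q-1} \|u\|_{L^1}^{1-\theta_0} \|u\|_{BV}^{\theta_0}.
\end{equation*}
Combining this with the elementary bound $\|u\|_{L^q}^q\leq \|u\|_{L^\infty}^{q-1}\|u\|_{L^1}$ (and absorbing $\|u\|_{L^1}\leq\|u\|_{BV}$ in the lower-order part) and taking $q$-th roots produces the stated inequality for $\|u\|_{W^{s,q}}$.

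The main obstacle is really just the book-keeping of exponents: the initial interpolation step must be chosen precisely as above in order to recover the sharp $L^\infty$ power $1-1/q$, and the split in $R$ must equalise the two contributions so that the $L^1$ and $BV$ factors end up with the exponents $(1-\theta_0)/q$ and $\theta_0/q$. An equivalent, perhaps conceptually cleaner, route is to identify $W^{s,q}$ as a real-interpolation space between $L^q$ and $BV$ (or $W^{1,q}$) and invoke Holmstedt-type estimates on the $K$-functional, as is done in \cite{CDDD03}; the direct argument sketched above is, however, the most transparent and gives the explicit form of the constants.
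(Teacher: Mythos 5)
Your proof is correct and follows what is essentially the standard elementary route to this kind of interpolation inequality. The paper itself does not prove Proposition~\ref{prop:Inter} but cites it as an adaptation of \cite{CDDD03} carried out in \cite{RZZ16}; your argument, based on the Gagliardo--Slobodeckij identity
\[
[u]_{W^{s,q}(\R^n)}^q = \int_{\R^n} |h|^{-n-sq}\,\|\tau_h u - u\|_{L^q(\R^n)}^q\,dh,
\]
the elementary interpolation $\|f\|_{L^q}^q\le\|f\|_{L^\infty}^{q-1}\|f\|_{L^1}$, the BV translation bound for small $|h|$, the trivial bound for large $|h|$, and optimization over the split radius $R$, gives a clean and self-contained proof. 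The convergence of the two radial integrals is exactly where the hypothesis $\theta_0=sq\in(0,1)$ enters, and your bookkeeping of exponents (with $R\sim\|u\|_{L^1}/\|u\|_{BV}$ equalizing the two contributions) is accurate. The absorption of the $\|u\|_{L^q}$ term requires $\|u\|_{L^1}\leq\|u\|_{BV}$, i.e.\ the full BV norm rather than the seminorm, which is the natural reading of the statement.

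One caveat worth flagging: your argument yields the estimate with a multiplicative constant $C(n,s,q)$ (coming from $\omega_{n-1}$, the factors $(1-sq)^{-1}$ and $(sq)^{-1}$, and the factor $2^{q-1}$), whereas the proposition as written displays no constant. This is an imprecision in the paper's phrasing rather than a gap in your proof: the way Proposition~\ref{prop:Inter} is used in Corollary~\ref{cor:exp_dec} produces a prefactor $C$ in any case, so only the exponents matter. If you wanted to be scrupulous you should include the constant, or remark that the constant degenerates as $sq\to 0^+$ or $sq\to 1^-$, which is harmless here because $sp<\theta_0$ is taken strictly below a fixed threshold.
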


As a consequence of this interpolation result, by Proposition \ref{prop:L1} and Lemma \ref{lem:BV} we deduce the desired $W^{s,p}$ convergence of the characteristic functions of the phases and the deformation gradients:

\begin{cor}
\label{cor:exp_dec}
Let $\Omega\subset\R^2$ satisfy \eqref{eqDomain}, and $\chi_k^j$ with $j\in \{1,2\}$ be as in Definition \ref{def chis}. There exists a constant $C>1$ and a regularity threshold $\theta_0\in(0,1)$ such that, for any $s>0,$ $p\in(1,\infty)$ satisfying $sp<\theta_0$ there exists $\alpha>0$ with
\begin{align*}
 \|\chi_k^j - \chi_{k+1}^j\|_{W^{s,p}(\Omega)} \leq C 2^{-k\alpha}, \ 
 \|\nabla \vc u_k - \nabla \vc u_{k+1}\|_{W^{s,p}(\Omega)} \leq C 2^{-k \alpha}.
\end{align*}
\end{cor}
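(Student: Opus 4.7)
The plan is to apply the interpolation inequality of Proposition \ref{prop:Inter} to each of $\chi_{k+1}^j - \chi_k^j$ and $\nabla \vc u_{k+1} - \nabla \vc u_k$, balancing the exponentially decaying $L^1$ bounds from Proposition \ref{prop:L1} against the exponentially growing $BV$ bounds from Lemma \ref{lem:BV}, and then choosing the regularity threshold $\theta_0$ small enough so that the decay dominates.

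First, I would extend these differences from $\Omega$ to all of $\R^2$ by zero. The characteristic functions $\chi_k^j$ are already extended by zero outside $\Omega$ by Definition \ref{def chis}, so there is nothing to check for them beyond the fact that the $BV$ norm of the extension picks up at most $2\Per(\Omega)$ on $\partial\Omega$. For the gradient differences, since $\vc u_k = \vc u_{k+1} = \mt M \vc x$ on $\partial\Omega$, the extension by zero contributes at most an additional $C\Per(\Omega)$ to the $BV$ norm (from the jump across $\partial\Omega$). Both terms are uniformly bounded and absorbed in the constants. One also records the uniform $L^\infty$ bounds
\[
\|\chi_{k+1}^j - \chi_k^j\|_{L^\infty(\R^2)} \leq 2, \qquad \|\nabla \vc u_{k+1} - \nabla \vc u_k\|_{L^\infty(\R^2)} \leq C,
\]
the latter since both gradients take values in the bounded set $K^{qc}$.

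Setting $\theta_0^{\mathrm{int}} := sp \in (0,1)$, Proposition \ref{prop:Inter} then yields, for $u \in \{\chi_{k+1}^j - \chi_k^j,\, \nabla \vc u_{k+1} - \nabla \vc u_k\}$,
\[
\|u\|_{W^{s,p}(\R^2)} \;\leq\; C \bigl(\|u\|_{L^1(\R^2)}^{1-sp}\,\|u\|_{BV(\R^2)}^{sp}\bigr)^{1/p}.
\]
Substituting the bound $\|u\|_{L^1} \leq C\,\tilde c^{\,k}$ from Proposition \ref{prop:L1} together with $\|u\|_{BV} \leq C\,M^k$, where $M := 3\max\{C_1,C_2\}$, obtained from Lemma \ref{lem:BV}, one finds
\[
\|u\|_{W^{s,p}(\Omega)} \;\leq\; C\, \bigl(\tilde c^{\,1-sp} M^{sp}\bigr)^{k/p}.
\]

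To conclude, I would define the regularity threshold
\[
\theta_0 := \frac{\log(1/\tilde c)}{\log(M/\tilde c)} \in (0,1),
\]
so that whenever $sp < \theta_0$ the base $\tilde c^{\,1-sp} M^{sp}$ is strictly less than $1$, and the geometric rate
\[
\alpha := -\frac{1}{p\log 2}\log\bigl(\tilde c^{\,1-sp} M^{sp}\bigr) > 0
\]
gives the claimed estimate $\|u\|_{W^{s,p}(\Omega)} \leq C\,2^{-k\alpha}$. No step poses a real obstacle: the substantive work has already been done in establishing the $L^1$ and $BV$ estimates, and the only point requiring care is the bookkeeping of the boundary contributions to the $BV$ norm upon extending to $\R^2$, which is entirely routine.
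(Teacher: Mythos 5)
Your proof is correct and follows the paper's own approach: interpolate the geometric $L^1$ decay of Proposition~\ref{prop:L1} against the geometric $BV$ growth of Lemma~\ref{lem:BV} via Proposition~\ref{prop:Inter}, and choose $\theta_0$ so that $\tilde c^{\,1-\theta_0}M^{\theta_0}=1$. If anything you are slightly more careful than the paper's own write-up: you use the rate $M=3\max\{C_1,C_2\}$ that Lemma~\ref{lem:BV} actually provides (the paper's displayed formula slips in $C_0$, which would make $\theta_0$ depend on $\mt M$ and contradict the claimed uniformity; Lemma~\ref{lem:BV} absorbs the $\mt M$-dependent $C_0^3$ into the prefactor), you keep the $1/p$ power explicit, and you spell out the routine zero-extension to $\R^2$.
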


\begin{proof}
Thanks to Proposition \ref{prop:L1}, Lemma \ref{lem:BV} and Proposition \ref{prop:Inter} it is enough to choose $\theta_0\in(0,1)$ such that
$$
\tilde{c}^{1-\theta_0}(3\max\{C_0,C_1,C_2\})^{\theta_0} = 1,
$$
where $\tilde{c}$ is as in Proposition \ref{prop:L1}, and $C_0,C_1,C_2\geq 1$ are as in Lemma \ref{lem:A3} and Lemma \ref{lem:A4}. 
Indeed, if $0<sp<\theta_0$, we obtain
\begin{align*}
\|\chi_k^i - \chi_{k+1}^i\|_{W^{s,p}(\Omega)} &= \|\chi_k^i - \chi_{k+1}^i\|_{W^{s,p}(\R^n)} 
\\
&\leq \|\chi_k^i - \chi_{k+1}^i\|_{L^{\infty}(\R^n)}^{1-\frac1p}\left(	\|\chi_k^i - \chi_{k+1}^i\|_{L^{1}(\R^n)}^{1-sp}\|v\|_{BV(\R^n)}^{sp}\right)^{\frac1p} 
\\
&\leq C \left(\tilde{c}^{1-sp}\left( 3\max\{C_0,C_1,C_2\}\right)^{sp}\right)^k.
\end{align*}
The fact that $sp<\theta_0$ thus implies that $\tilde{c}^{1-sp}\left( 3\max\{C_0,C_1,C_2\}\right)^{sp}=2^{-k \alpha}$ for some $\alpha>0$, and therefore the claim follows.
\end{proof}

Now with the previous results in hand, the proof of Theorem \ref{thm:main} follows immediately.

\begin{proof}[Proof of Theorem \ref{thm:main}]
The existence of $\nabla\vc u\in W^{s,p}(\Omega;\R^{2\times2})$ and the fact that $\nabla \vc u_k \rightarrow \nabla \vc u$ in $W^{s,p}(\Omega;\R^{2\times2})$ follows from the fact that, by Corollary \ref{cor:exp_dec}, $\nabla \vc u_k$ is a Cauchy sequence (which follows from a telescope sum argument). 

We note further that $\nabla \vc u \in K$, since for every $\epsilon>0$ and for every $k_0 \in \N$ there exists $\ell_0 \in \N$ such that for $\N \ni \ell \geq \ell_0$
\begin{align}
\label{eq:dist_improve}
|\{x\in \Omega: \ \nabla \vc u_{\ell}(x) \in \mathcal{V}_k \mbox{ for some } k \geq k_0\}|>(1-\epsilon)|\Omega|.
\end{align}
To observe this, we use the volume improvement of the good sets in Lemmas \ref{lem:A3} and \ref{lem:A4}. Indeed, after at least $k_0$ steps the volume condition that in each step $|T_g|\geq v|T|$ implies that on a set of volume of size at least $v^{k_0}|\Omega|$ we have $\nabla \vc u_{k_0} \in \mathcal{V}_{k_0}$ (or an even better set $\mathcal{V}_k$). Here we used that the function $\ell_k(\cdot)$ in Algorithm \ref{alg:convex_int} is increasing. Hence, for $m\in \N$ after $m k_0$ iteration steps at least on a volume fraction of size
\begin{align*}
\sum\limits_{j=0}^{m-1} v^{k_0}(1-v^{k_0})^j = v^{k_0}\frac{1-(1-v^{k_0})^{m}}{1-(1-v^{k_0})} = 1-(1-v^{k_0})^{m}
\end{align*}
we have $\nabla \vc u_{m k_0} \in \mathcal{V}_{k} $ for some $k \geq k_0$. Choosing $m_0\in \N$ such that $(1-v^{k_0})^{m_0}\leq \epsilon$ then yields that $\ell_0 = m_0 k_0$ satisfies the claimed estimate \eqref{eq:dist_improve}. As $\nabla \vc u \in L^{\infty}(\Omega,\R^{2\times2})$ this in particular implies that $\dist(\nabla \vc u_k , K) \rightarrow 0$ in $L^p(\Omega)$ for any $p\in [1,\infty)$ as $k\rightarrow \infty$.
\end{proof}

\section{Dimension of the singular set}
\label{sec:sing_set}

After having deduced higher regularity for our convex integration solutions for the two-well problem in the previous section, we next recall that this has direct implications on their singular sets and the regularity of the phase boundaries. In order to observe this, we invoke singular set estimates for Sobolev functions (see for instance \cite{JM96, Si}).\\

Let us recall that, given a set $A\subset\R^n$, the Box-counting dimension is defined as
\begin{align}
\label{eq:dim}
\dim_B(A) := \lim_{\eps\to 0} \frac{\log N_\eps}{\log \frac 1\eps}
\end{align}
whenever the above limit exists. Here the number $N_\eps\in\mathbb{N}$ is the number of cubes of side lengths $\eps$ needed to cover $A$. 

Not requiring the limit to exist (see for instance \cite[Chapter 5]{Mat} for a discussion on this possibility), we define 
$$
m_d(A):= \limsup_{j\to\infty} N_{2^{-j}} 2^{-jd},
$$
and 
$$
\overline{\dim}_B(A) := \inf\left\{ d\colon m_d(A) = 0  \right\} = \sup \left\{ d\colon m_d(A) = \infty  \right\}.
$$
We start by providing a lower bound on the Box dimension of $\partial \Omega$ depending on the regularity of $\chi_\Omega$. Here $\chi_{\Omega}$ denotes the characteristic function of some set $\Omega \subset \R^n$.

\begin{prop}
\label{prop:lower}
Let $\Omega\subset\R^n$ and $\theta_0\in(0,1)$ be such that $\chi_\Omega\in W^{s,p}(\R^n)$ for any $s>0,p\geq 1$ satisfying $sp<\theta_0$, but $\chi_\Omega\notin W^{s,p}(\R^n)$ for any $s>0,p\geq 1$ satisfying $sp\geq\theta_0$.   Then, $\overline{\dim}_B(\partial\Omega)\geq n-\theta_0$.
\end{prop}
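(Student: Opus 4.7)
The plan is to prove the contrapositive: if $\overline{\dim}_B(\partial\Omega) < n-\theta_0$, then one can show $\chi_\Omega \in W^{s,p}(\R^n)$ for some pair $(s,p)$ with $sp \geq \theta_0$, contradicting the second hypothesis. To begin, fix $\eta > 0$ small enough that $d:= \overline{\dim}_B(\partial\Omega) + \eta$ still satisfies $d < n - \theta_0$. By definition of the upper box dimension, there is a constant $C$ with $N_{2^{-j}}(\partial\Omega) \leq C\, 2^{jd}$ for every sufficiently large $j\in\N$. Covering the $2^{-j}$-tubular neighborhood of $\partial\Omega$ by slight enlargements of these cubes yields the key geometric estimate
$$|\{x \in \R^n : \dist(x,\partial\Omega) \leq 2^{-j}\}| \leq C'\, 2^{-j(n-d)}.$$

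The next step is to translate this into a symmetric-difference bound. Since $\Omega \triangle (\Omega + h)$ is (modulo null sets) contained in the $|h|$-neighborhood of $\partial\Omega$, a dyadic interpolation of the estimate above gives
$$\|\chi_\Omega(\cdot + h) - \chi_\Omega\|_{L^p(\R^n)}^p = |\Omega \triangle (\Omega + h)| \leq C''\, |h|^{n-d}$$
for every $p \geq 1$ and every $h$ with $|h| \leq 1$, where we used that $\chi_\Omega$ takes only the values $0$ and $1$, so $|\chi_\Omega(x+h) - \chi_\Omega(x)|^p = |\chi_\Omega(x+h) - \chi_\Omega(x)|$.

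With this in hand, the plan is to insert the bound into the Gagliardo-Slobodeckij characterization of the Sobolev seminorm,
$$[\chi_\Omega]_{W^{s,p}(\R^n)}^p = \int_{\R^n} \frac{|\Omega \triangle (\Omega + h)|}{|h|^{n+sp}}\, dh.$$
Splitting the integral at $|h|=1$, the local part converges whenever $sp < n-d$, since in polar coordinates $\int_0^1 r^{(n-d)-1-sp}\, dr < \infty$. The tail part converges for every $sp > 0$ via the trivial bound $|\Omega \triangle (\Omega + h)| \leq 2|\Omega|$; here $|\Omega| < \infty$ is automatic, because the hypothesis $\chi_\Omega \in W^{s,p}(\R^n)$ for some $sp<\theta_0$ already forces $\chi_\Omega \in L^p(\R^n)$. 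Combining these with the trivial $L^p$ bound $\|\chi_\Omega\|_{L^p}^p=|\Omega|$, we conclude $\chi_\Omega \in W^{s,p}(\R^n)$ for every $0 < sp < n-d$. Since $n-d > \theta_0$, we may select $(s,p)$ with $\theta_0 \leq sp < n-d$, which contradicts the hypothesis that $\chi_\Omega \notin W^{s,p}$ for $sp \geq \theta_0$, and hence the desired lower bound $\overline{\dim}_B(\partial\Omega) \geq n-\theta_0$ follows.

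The only genuine obstacle in the argument is the covering-to-measure step, i.e.\ upgrading the upper bound on $N_{2^{-j}}(\partial\Omega)$ into a volume bound for the $2^{-j}$-tubular neighborhood, and then to a symmetric-difference estimate that interpolates across all $|h|\le 1$. Once this geometric ingredient is in place, the conclusion is a direct polar-coordinates computation on the Gagliardo seminorm; the assertion is essentially the ``lower bound'' side of the standard correspondence between fractional Sobolev regularity of $\chi_\Omega$ and box-counting dimension of $\partial\Omega$ referenced in \cite{JM96, Si}.
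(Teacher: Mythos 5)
Your argument is correct and follows the same logical skeleton as the paper's: argue by contradiction (equivalently, contrapositive) from $\overline{\dim}_B(\partial\Omega) < n-\theta_0$, deduce $\chi_\Omega \in W^{s,p}(\R^n)$ for some $sp\ge\theta_0$, and contradict the hypothesis. The only difference is that the paper delegates the key implication (``box-counting bound on $\partial\Omega$ implies $W^{s,p}$ regularity of $\chi_\Omega$ for $sp < n-d$'') to the cited result \cite[Proposition 2.1, Chapter II]{JM96}, whereas you reconstruct that implication from scratch via the covering-to-tubular-neighborhood volume estimate, the resulting symmetric-difference bound $|\Omega\triangle(\Omega+h)|\lesssim |h|^{n-d}$, and a direct computation with the Gagliardo--Slobodeckij seminorm. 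This makes your write-up more self-contained at no cost in correctness; all the steps (dyadic interpolation between $|h|=2^{-j}$ scales, using $|\chi_\Omega(x+h)-\chi_\Omega(x)|^p = |\chi_\Omega(x+h)-\chi_\Omega(x)|$, splitting the $h$-integral at $|h|=1$, and the observation $|\Omega|<\infty$ from the hypothesis) check out.
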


\begin{proof}
Suppose for a contradiction that 
$$
d:=\overline{\dim}_B(\partial\Omega)< n-\theta_0.
$$
Then, there exists $\eps>0$ such that $m_{d+\eps}(\partial\Omega)=0$ and $d+\eps<n-\theta_0$. But then we can apply \cite[Proposition 2.1 in Chapter II]{JM96} (which works under our $\limsup$ definition) to deduce that $\chi_\Omega\in W^{s,p}(\R^n)$ with $sp=\theta_0$, contradicting  our assumptions, and thus leading to the claimed result.
\end{proof}

\begin{rmk}
Improving the optimal regularity of $\chi_\Omega$ to $W^{s,p}(\R^n)$ for any $s>0,p\geq 1$ satisfying $sp<\theta_1$, with $\theta_0<\theta_1$, decreases the lower bound  for $\dim_B(\partial\Omega)$.
\end{rmk}

\begin{rmk}
In the case that the limit in \eqref{eq:dim} exists Proposition \ref{prop:lower} turns into a statement of the true box-counting dimension.
\end{rmk}
Applying Proposition \ref{prop:lower} to the sets $\{\chi^{0}=1\}$ and $\{\chi^1 = 1\} \cap \Omega$ from Theorem \ref{thm:main}, we thus obtain that
\begin{align*}
\overline{\dim}_{B}(\partial {\{\chi^{0}=1\}}) \geq n-\theta_0, \ \overline{\dim}_{B}(\partial {\{\chi^{1}=1\}}) \geq n-\theta_0, 
\end{align*}
where $\theta_0>0$ is the exponent from Theorem \ref{thm:main}.

Given $A\subset\R^n,$ and $\rho>0$ let us now consider the thickened set 
\[
\begin{split}
S_\rho(A) :=\bigl\{x\in A\colon \exists\mu>0 \text{ s.t. }\forall\eps\in(0,1], \exists A_\eps,\tilde A_\eps, \,A_\eps\in B(x,\eps)\cap A,\,\\
 \tilde A_\eps\in B(x,\eps)\cap A^c\text{ and }|A_\eps||\tilde A_\eps|\geq \mu \eps^{2n+\rho} 	\bigr\},
\end{split}
\]
where $B(x,\eps)$ denotes the ball centred at $x$ of radius $\eps$. Denoting by $\dim_P$ the packing dimension (see \cite{Mat, Falc}) and invoking \cite[Proposition 3.3]{Si} (or
\cite[Thm 2.2]{JM96}), we arrive at
$$
\dim_P(S_\rho(\partial A))\leq n- sp.
$$
Applying this to our sets $\{\chi^{0}=1\}$ and $\{\chi^1 = 1\} \cap \Omega$, we hence obtain that
\begin{align*}
\dim_P(S_\rho(\partial \{\chi^1 = 1\})),
\dim_P(S_\rho(\partial \{\chi^1 = 0\}))
\leq n-\theta_0.
\end{align*}

\begin{rmk}
\label{rmk:sing_set}
We remark that as the definition of the set $S_\rho(\partial \{\chi^0 = 1\})$ contains a density statement, the dimension of $\partial \{\chi^0 = 1\}$ might be substantially larger than indicated by the estimate obtained above.
\end{rmk}

\section{The two-well problem with one rank-one connection}
\label{sec:one_rank_one}

In this section we consider the problem
\beq
\label{defK1}
K = SO(2)\mt F_1\cup SO(2)\mt F_2
\eeq
where $\mt F_1,\mt F_2\in\R^{2\times2}$ are respectively given by
\begin{align}
\label{eq:matrices_new}
{\mt F}_1 = \left[\begin{array}{ ccc } 1 & 0 \\ 0 & 1+\delta  \end{array}\right],\qquad
{\mt F}_2 = \left[\begin{array}{ ccc } 1 & 0 \\ 0 & 1-\delta  \end{array}\right] = \mt F_1 -2\delta \vc e_2 \otimes \vc e_2 ,
\end{align}
and $\delta \in (-1,1)$.
In this case there is only one rank-one connection between the wells (see for instance \cite[Proposition 5.1]{DM1} for this). Exploiting arguments by Ball and James \cite{BJ92}, we prove that in this phase transformation there are \emph{no} convex integration solutions (at least for affine boundary conditions) and all gradient Young measures (with affine boundary conditions) are (unique) simple laminates.\\

As a first step into this direction, we observe that the lamination convex hull is two-dimensional:

\begin{lem}
\label{lem:lc_hull}
Let $\mt F_1, \mt F_2$ be as above. Then, 
\begin{align}
\label{eq:Klc_1}
K^{lc} = K^{lc,1}= \bigcup\limits_{\lambda \in [0,1]}SO(2) \left[\begin{array}{ ccc } 1 & 0 \\ 0 & 1-\delta +2\lambda \delta \end{array}\right].
\end{align}
\end{lem}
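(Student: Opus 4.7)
The plan is to identify the lamination convex hull in two steps: first, show that a single lamination step already produces $K^{lc,1}$, and second, show that $K^{lc,1}$ is stable under any further rank-one mixing, so no additional lamination produces new points. Throughout, write $\mt A_\lambda := \diag(1, 1-\delta+2\lambda\delta)$.

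For the easy inclusion $K^{lc,1} \subseteq K^{lc}$, observe that $\mt F_1 - \mt F_2 = 2\delta \vc e_2 \otimes \vc e_2$ has rank one, and for any $\mt R \in SO(2)$ the pair $(\mt R \mt F_1, \mt R \mt F_2)$ is again rank-one connected. Convex combinations $\mt R[\lambda \mt F_1 + (1-\lambda) \mt F_2] = \mt R \mt A_\lambda$ then yield precisely the set $K^{lc,1}$; in particular $K \subseteq K^{lc,1} \subseteq K^{(1)} \subseteq K^{lc}$.

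For the reverse inclusion, I would argue by induction on $i$ that $K^{(i)} \subseteq K^{lc,1}$, provided the closedness of $K^{lc,1}$ under rank-one mixing is established. So fix $\mt R_1 \mt A_{\lambda_1}, \mt R_2 \mt A_{\lambda_2} \in K^{lc,1}$ with $\rank(\mt R_1\mt A_{\lambda_1} - \mt R_2 \mt A_{\lambda_2}) \leq 1$. Multiplying by $\mt R_2^T$ preserves the rank, so the condition becomes $\det(\mt R \mt A_{\lambda_1} - \mt A_{\lambda_2}) = 0$ with $\mt R := \mt R_2^T \mt R_1$. Parametrising $\mt R = \begin{pmatrix} c & -s \\ s & c \end{pmatrix}$ with $c^2+s^2=1$ and a direct computation gives
\begin{align*}
\det(\mt R \mt A_{\lambda_1} - \mt A_{\lambda_2})
= (c-1)(c\, a_{\lambda_1} - a_{\lambda_2}) + s^2 a_{\lambda_1}
= (a_{\lambda_1}+a_{\lambda_2})(1-c),
\end{align*}
where $a_\lambda := 1-\delta+2\lambda\delta$. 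Since $\delta \in (-1,1)$ and $\lambda \in [0,1]$ force $a_\lambda \in [1-|\delta|,1+|\delta|] \subset (0,2)$, the factor $a_{\lambda_1}+a_{\lambda_2}$ is strictly positive and hence $c=1$, i.e., $\mt R = \mt{Id}$ and $\mt R_1 = \mt R_2$. Consequently any rank-one segment inside $K^{lc,1}$ has the form $\mt R_1[\mu \mt A_{\lambda_1}+(1-\mu)\mt A_{\lambda_2}] = \mt R_1 \mt A_{\mu\lambda_1+(1-\mu)\lambda_2}$, which still lies in $K^{lc,1}$.

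The main obstacle is the rigidity statement underpinning the second step: that two points of $K^{lc,1}$ sitting in different $SO(2)$-orbits cannot be rank-one connected. This is specific to the two-dimensional setting (where the difference of two distinct rotations is automatically of full rank) and to the presence of only one rank-one connection between the wells; it is precisely this observation that will later forbid convex integration solutions and force all gradient Young measures to be simple laminates along the unique $\vc e_2 \otimes \vc e_2$ direction.
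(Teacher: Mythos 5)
Your proof is correct, and it follows the same overall strategy as the paper (show that the first-order lamination $K^{lc,1}$ is stable under further rank-one mixing), but it is more self-contained. Where the paper appeals twice to \cite[Proposition 5.1]{DM1}---once to identify the unique rank-one connection between the original wells and once to show that any two elements of $K^{lc,1}$ in different $SO(2)$-orbits are not rank-one connected---you replace both citations with the explicit factorisation
$\det(\mt R \mt A_{\lambda_1} - \mt A_{\lambda_2}) = (a_{\lambda_1}+a_{\lambda_2})(1-c)$ for $\mt R = \begin{pmatrix} c & -s \\ s & c \end{pmatrix}\in SO(2)$.
This one identity simultaneously rules out rank-one connections within a single well ($\lambda_1=\lambda_2$, $\mt R\neq \mt{Id}$), pins down the unique connection between the two wells, and shows that $K^{lc,1}$ is closed under lamination; the closing observation that $\mu a_{\lambda_1}+(1-\mu)a_{\lambda_2}=a_{\mu\lambda_1+(1-\mu)\lambda_2}$ (a consequence of the affine dependence of $a_\lambda$ on $\lambda$) is the same arithmetic the paper glosses over. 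What the paper's route buys is brevity by delegating to a known result; what yours buys is an elementary, verifiable computation that makes the role of $a_\lambda>0$ transparent and prepares the reader for the Young-measure rigidity in Proposition \ref{lem:laminates}. One minor presentational point: the paper's definition of $H^{(i)}$ uses $\rank(\mt A-\mt B)=1$, while you phrase the stability step with $\rank\leq 1$; the rank-zero case is trivial, so nothing is lost, but it is worth saying explicitly that the determinant criterion $\det(\cdot)=0$ you invoke captures $\rank\leq 1$ for $2\times 2$ matrices, with equality to $1$ when the matrices are distinct.
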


\begin{proof}
This is immediate by computing first order laminates. By \cite[Proposition 5.1]{DM1} these are always of the form
\begin{align*}
\lambda \mt Q \left[\begin{array}{ ccc } 1 & 0 \\ 0 & 1+\delta \end{array}\right] + (1-\lambda )\mt Q\left[\begin{array}{ ccc } 1 & 0 \\ 0 & 1-\delta \end{array}\right]
= \mt Q \left[\begin{array}{ ccc } 1 & 0 \\ 0 & 1-\delta + 2\lambda \delta \end{array}\right],
\end{align*}
with $\lambda \in (0,1),$ $\mt Q\in SO(2)$. As this yields the set on the right hand side of \eqref{eq:Klc_1} this implies that first order laminates are exactly of the same structure as the original wells. Hence, invoking Proposition 5.1 in \cite{DM1} once more, we infer that two matrices in $K^{lc,1}$ also have only one rank-one connection, and that 
$\mt Q \mt A-\mt B= \vc a \otimes \vc n$ can be satisfied by $\mt A = \left[\begin{array}{ ccc } 1 & 0 \\ 0 & 1-\delta +2 \lambda_1 \delta \end{array}\right],\mt B = \left[\begin{array}{ ccc } 1 & 0 \\ 0 & 1-\delta +2 \lambda_2 \delta \end{array}\right] \in K^{lc,1}$, $\lambda_1, \lambda_2 \in [0,1], \lambda_1 \neq \lambda_2$ and some $\mt Q \in SO(2)$, $\vc a,\vc n\in\R^2$ if and only if $\mt Q=\mt{Id}$ and $\vc a\parallel\vc n \parallel \vc e_2$. Thus further lamination does not provide a larger lamination convex hull.
\end{proof}

We use a trick due to Ball and James to characterise the quasiconvex hull of $K$:

\begin{lem}
\label{lem:Kqc}
Let $K$ be as above, then 
\begin{align*}
K^{qc} = K^{lc} = \bigcup\limits_{\lambda \in [0,1]} SO(2)\left[\begin{array}{ ccc } 1 & 0 \\ 0 & 1-\delta +2\lambda \delta \end{array}\right].
\end{align*}
\end{lem}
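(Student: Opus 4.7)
My plan is to establish the only non-trivial inclusion, $K^{qc} \subseteq K^{lc}$, via the Ball--James trick that marries the Young measure representation of $K^{qc}$ with the quasiaffineness of the determinant in dimension two.

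First I would fix $\mt F \in K^{qc}$ and invoke the Kinderlehrer--Pedregal theorem to obtain a homogeneous gradient Young measure $\nu$ supported in $K$ with barycenter $\mt F$. Since $K$ is the disjoint union of $SO(2)\mt F_1$ and $SO(2)\mt F_2$ (they are separated by a positive distance, as $\delta \in (-1,1)\setminus\{0\}$), I would split $\nu = \lambda \nu_1 + (1-\lambda)\nu_2$ with $\nu_i$ carried by $SO(2)\mt F_i$ and $\lambda \in [0,1]$, and push each $\nu_i$ forward to a probability measure $\mu_i$ on $SO(2)$ under the obvious identification. The average rotation $\bar{\mt R}_i := \int_{SO(2)} \mt R\, d\mu_i$ then has the form $\bar{\mt R}_i = \left[\begin{array}{cc} c_i & -s_i \\ s_i & c_i \end{array}\right]$ with $c_i^2 + s_i^2 \leq 1$ by Cauchy--Schwarz, and equality characterises the case when $\mu_i$ is a Dirac mass at a single rotation.

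The heart of the argument is a two-sided comparison of $\det \mt F$. On the one hand, the quasiaffineness of $\det$ in $\R^{2\times 2}$ gives $\det \mt F = \int \det\, d\nu = \lambda(1+\delta)+(1-\lambda)(1-\delta)$. On the other hand, a direct expansion of the determinant of $\mt F = \lambda \bar{\mt R}_1 \mt F_1 + (1-\lambda)\bar{\mt R}_2 \mt F_2$ yields a bilinear expression in $A := c_1^2+s_1^2$, $B := c_2^2+s_2^2$ and $C := c_1 c_2 + s_1 s_2$. Equating the two expressions and rearranging produces an identity whose left-hand side is bounded below by $2\lambda(1-\lambda)$ (thanks to $A,B \leq 1$ together with the exact cancellation $(1+\delta)+(1-\delta) = 2$), while its right-hand side is bounded above by $2\lambda(1-\lambda)$ (because $|C| \leq \sqrt{AB} \leq 1$ by Cauchy--Schwarz). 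The pinching forces $A = B = 1$, so each $\mu_i$ is a Dirac mass at some $\mt Q_i \in SO(2)$, and also $C = 1$, which, combined with $A = B = 1$, forces $\mt Q_1 = \mt Q_2 =: \mt Q$. One then concludes $\mt F = \mt Q(\lambda \mt F_1 + (1-\lambda)\mt F_2) = \mt Q\, \diag(1, 1-\delta+2\lambda\delta) \in K^{lc}$. The edge cases $\lambda \in \{0,1\}$ reduce the measure to a single well and are handled by the same determinant identity.

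The main obstacle is orchestrating this pinching. Polyconvex inequalities such as $|\mt F \vc e_1|^2 \leq 1$ alone are one-sided and do not suffice to locate $\mt F$ inside $K^{lc}$. What tips the balance is that the two wells carry \emph{different} determinants $1\pm\delta$, so the quasiaffine identity for $\det \mt F$ furnishes a second independent equation which, combined with the constraints $A, B, |C| \leq 1$ and the specific coefficients $1\pm\delta$, leaves no slack at all. This interplay between the unique rank-one direction (inherited from Lemma~\ref{lem:lc_hull}) and the non-equal determinants of the wells is what rules out any convex integration solutions in this setting.
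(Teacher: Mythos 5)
Your proof is correct, but it follows a genuinely different route from the paper's. Where the paper works directly with a small list of quasiconvex and polyconvex test functions ($\det$, a rectified $\det^{-1}$, $|\mt F\vc e_1|$, and $|f(\mt F)\Cof(\mt F)\vc e_1| = |\mt F^{-T}\vc e_1|$) and deduces $\mt F\vc e_1 = \mt F^{-T}\vc e_1$ from the purely algebraic identity $|\mt F\vc e_1 - \mt F^{-T}\vc e_1|^2 = |\mt F\vc e_1|^2 + |\mt F^{-T}\vc e_1|^2 - 2$, you pass through the Young measure characterisation of $K^{qc}$: decompose $\nu = \lambda\nu_1 + (1-\lambda)\nu_2$ over the two separated wells, average the rotations to get $\bar{\mt R}_i$ with $c_i^2+s_i^2\leq 1$, and then pinch via the quasiaffinity of $\det$. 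I checked the bilinear computation: setting $A=c_1^2+s_1^2$, $B=c_2^2+s_2^2$, $C=c_1c_2+s_1s_2$, one gets $\det\mt F = \lambda^2(1+\delta)A + (1-\lambda)^2(1-\delta)B + 2\lambda(1-\lambda)C$, and equating this with the quasiaffine value $\lambda(1+\delta)+(1-\lambda)(1-\delta)$ and rearranging to $\lambda(1+\delta)[1-\lambda A] + (1-\lambda)(1-\delta)[1-(1-\lambda)B] = 2\lambda(1-\lambda)C$ does indeed pinch to $A=B=C=1$ since both sides are squeezed against $2\lambda(1-\lambda)$; the argument is sound for all $\delta\in(-1,1)\setminus\{0\}$ since $1\pm\delta>0$ keeps all terms on the left nonnegative. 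Your approach is in fact closer in spirit to Ball--James's original Theorem 7.1 (which the paper itself invokes in the proof of Proposition~\ref{lem:laminates}) and has the bonus of simultaneously showing that the homogeneous gradient Young measure underlying each $\mt F\in K^{qc}$ is a unique simple laminate, foreshadowing that later proposition; the trade-off is that you rely on the Kinderlehrer--Pedregal characterisation of $K^{qc}$ as barycentres of gradient Young measures supported on $K$, whereas the paper's proof needs nothing beyond elementary properties of quasiconvex functions. If you write this up in full, be explicit that you are using Jensen's inequality (rather than ``Cauchy--Schwarz'') for $c_i^2+s_i^2\leq 1$, and spell out the rearranged identity so the pinching is visible.
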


\begin{proof}
Let us assume without loss of generality that $\delta\geq0.$ We start by considering the function $f:\R^{2\times 2}\to \R$ given by
\begin{align*}
f(\mt F):=
\begin{cases}
\left(\det(\mt F)\right)^{-1},\qquad &\mbox{ if } \det(\mt F) >(1-\delta),\\
-\det(\mt F)(1-\delta)^{-2} + 2(1-\delta)^{-1},\qquad &\mbox{ if } \det(\mt F) \leq (1-\delta),
\end{cases}
\end{align*}
which is polyconvex, i.e., a convex function of the determinant and the minors.
As a consequence $f$ is also quasiconvex (see \cite{M1, D}), and hence
\begin{align*}
\sup_{\mt F\in K^{qc}} \det(\mt F)&\leq \sup_{\mt F\in K} \det (\mt F) \leq 1+\delta,\\
\sup_{\mt F\in K^{qc}} f(\mt F)&\leq \sup_{\mt F\in K} f(\mt F) \leq (1-\delta)^{-1},
\end{align*}
that means, $1-\delta \leq \det(\mt F) \leq 1+\delta$ for any $\mt F\in K^{qc}.$ Now, we note that the functions
\begin{align*}
\R^{2\times 2} \ni \mt F \mapsto |\mt F \vc e_1| \in \R,
 \qquad \R^{2\times 2} \ni \mt F \mapsto |f(\mt F)\Cof(\mt F) \vc e_1|
\end{align*}
are quasiconvex. 
Indeed, the norm is a convex function,  $\mt F \mapsto \mt F \vc e_1$ is affine and $\mt F \mapsto f(\mt F)\Cof(\mt F) \vc e_1$ is polyconvex. Since by the observations from above $\mt F\in K^{qc},$ $1-\delta\leq \det(\mt F) \leq 1+\delta$, we obtain that $|f(\mt F)\Cof(\mt F) \vc e_1| = |\mt F^{-T} \vc e_1|$.
As for all $\mt F\in K$ we have $|\mt F \vc e_1|=1 = |\mt F^{-T} \vc e_1|$, we also infer that
\begin{align*}
\sup\limits_{\mt F\in K^{qc}}|\mt F \vc e_1| &\leq \sup\limits_{\mt F\in K}|\mt F \vc e_1| = 1, \\ 
\sup\limits_{\mt F\in K^{qc}}|\mt F^{-T} \vc e_1| &\leq \sup\limits_{\mt F\in K}|\mt F^{-T} \vc e_1| = 1.
\end{align*}
As a consequence, for all $\mt F \in K^{qc}$ we have
\begin{align*}
|\mt F \vc e_1 - \mt F^{-T} \vc e_1 |^2 = |\mt F \vc e_1|^2 + |\mt F^{-T} \vc e_1|^2 - 2 \lbr \mt F \vc e_1, \mt F^{-T} \vc e_1 \rbr
= |\mt F \vc e_1|^2 + |\mt F^{-T} \vc e_1|^2 - 2
\leq 0,
\end{align*}
which implies that $|\mt F \vc e_1 - \mt F^{-T} \vc e_1|^2 = 0$ for all $\mt F \in K^{qc}$. Thus,
\begin{align*}
\mt F^T \mt F \vc e_1 = \vc e_1,
\end{align*}
which together with symmetry yields
$\mt F^T \mt F = \left[\begin{array}{ ccc } 1 & 0 \\ 0 & b\end{array}\right]$. Finally, $b\in [(1-\delta)^2, (1+\delta)^2]$ by the condition on the determinant of $\mt F\in K^{qc}$.
\end{proof}

\begin{rmk}
From the argument in the proof of Lemma \ref{lem:Kqc} we actually have that 
$$
K^{pc} \subset \bigcup\limits_{\lambda \in [0,1]} SO(2)\left[\begin{array}{ ccc } 1 & 0 \\ 0 & 1-\delta +2\lambda \delta \end{array}\right] = K^{lc},
$$
where $K^{pc}$ denotes the polyconvex hull of $K$ (see e.g., \cite{M1}). As, in general, $K^{lc}\subset K^{rc}\subset K^{qc}\subset K^{pc}$, where $K^{rc}$ denotes the rank-one-convex hull of $K$ (see e.g., \cite{M1}) we infer that, in the case where $K$ is given by \eqref{defK1},
$$
K^{lc}= K^{rc}= K^{qc} = K^{pc}.
$$
\end{rmk}

Finally, we prove that all gradient Young measures associated with the phase transformation \eqref{defK1} are (unique) simple laminates. In particular, no convex integration solutions exist for this phase transformation.

\begin{prop}
\label{lem:laminates}
Let $\Omega \subset \R^n$ be open, bounded, Lipschitz.
Let $\vc y^{j} \rightharpoonup \vc y$ in $W^{1,\infty}(\Omega)$ with $\vc y^{j}\in \mathcal{A}_{F}:= \{\vc y \in W^{1,\infty}(\Omega, \R^n): \vc y|_{\partial \Omega} = \mt F \vc x \}$. Let $\nu_{\vc x}$ denote the associated gradient Young measure and assume that 
\begin{align*}
\supp\, \nu_{\vc x} \in K,
\end{align*}
where $K$ is as in \eqref{defK1}. Then, there exists $\mt Q \in SO(2)$ and $\lambda\in[0,1]$ such that
\begin{align*}
\nu_{\vc x} = \lambda \delta_{\mt Q \mt F_1} + (1-\lambda)\delta_{\mt Q \mt F_2}.
\end{align*}
This representation is unique.
\end{prop}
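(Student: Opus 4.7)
The plan is to revisit the two quasiconvex test functions already used in the proof of Lemma~\ref{lem:Kqc}, namely $\psi_1(\mt F):=|\mt F\vc e_1|^2$ (convex, hence quasiconvex) and the globally polyconvex function $\psi_2(\mt F):=|f(\mt F)\Cof(\mt F)\vc e_1|$ (which coincides with $|\mt F^{-T}\vc e_1|$ whenever $\det\mt F>1-\delta$), but now to apply Jensen's inequality for them \emph{directly} to the gradient Young measure $\nu_{\vc x}$. Rigidity of $\supp\nu_{\vc x}$ will then be extracted from equality in Jensen via the strict convexity of $\psi_1$ in the first column of $\mt F$.

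Concretely, I would first observe that on $K$ both $\psi_1$ and $\psi_2$ are identically equal to $1$, since $\mt F_i\vc e_1=\vc e_1=\mt F_i^{-T}\vc e_1$ for $i=1,2$ and rotations preserve norms. As $\det$ is quasiaffine and $\det\mt G\in\{1-\delta,1+\delta\}$ for $\mt G\in K$, the barycenter $\bar\nu_{\vc x}=\nabla\vc y(\vc x)$ satisfies $\det\bar\nu_{\vc x}\geq 1-\delta>0$, so $\psi_2(\bar\nu_{\vc x})$ simplifies to $|\bar\nu_{\vc x}^{-T}\vc e_1|$. Jensen's inequality for $W^{1,\infty}$ gradient Young measures then yields $|\bar\nu_{\vc x}\vc e_1|^2\leq 1$ and $|\bar\nu_{\vc x}^{-T}\vc e_1|^2\leq 1$. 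Combining these with the identity $\langle\bar\nu_{\vc x}\vc e_1,\bar\nu_{\vc x}^{-T}\vc e_1\rangle=\vc e_1^T\bar\nu_{\vc x}^T\bar\nu_{\vc x}^{-T}\vc e_1=1$ forces, exactly as in the pinching step of Lemma~\ref{lem:Kqc},
\[
|\bar\nu_{\vc x}\vc e_1-\bar\nu_{\vc x}^{-T}\vc e_1|^2=|\bar\nu_{\vc x}\vc e_1|^2+|\bar\nu_{\vc x}^{-T}\vc e_1|^2-2\leq 0,
\]
so $|\bar\nu_{\vc x}\vc e_1|^2=1=\langle\nu_{\vc x},\psi_1\rangle$, i.e.\ equality in Jensen for $\psi_1$. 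Since $\psi_1$ is a strictly convex quadratic form in the first column of $\mt F$, this equality implies that $\mt G\vc e_1=\bar\nu_{\vc x}\vc e_1$ for $\nu_{\vc x}$-a.e.\ $\mt G$. Writing $\mt G=\mt Q\mt F_i\in K$ with $\mt Q\in SO(2)$ and using $\mt F_i\vc e_1=\vc e_1$ gives $\mt Q\vc e_1=\bar\nu_{\vc x}\vc e_1$, and since a rotation in $SO(2)$ is determined by its action on $\vc e_1$, all such $\mt Q$ coincide with a single $\mt Q(\vc x)\in SO(2)$. Hence $\supp\nu_{\vc x}\subset\{\mt Q(\vc x)\mt F_1,\mt Q(\vc x)\mt F_2\}$, which yields the laminate representation $\nu_{\vc x}=\lambda(\vc x)\delta_{\mt Q(\vc x)\mt F_1}+(1-\lambda(\vc x))\delta_{\mt Q(\vc x)\mt F_2}$. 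Uniqueness is then immediate: $\mt Q(\vc x)$ is pinned down by $\bar\nu_{\vc x}\vc e_1$, and $\lambda(\vc x)$ by $\bar\nu_{\vc x}\vc e_2=(1-\delta+2\lambda(\vc x)\delta)\mt Q(\vc x)\vc e_2$.

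The main obstacle is the passage from equality in Jensen for $\psi_1$ to rigidity of the support of $\nu_{\vc x}$: $\psi_1$ fails to be strictly convex on all of $\R^{2\times 2}$, so Jensen's equality does not force $\nu_{\vc x}$ to be a Dirac mass. Strict convexity only in the \emph{first column} of $\mt F$ is, however, precisely what is needed, because the two wells $SO(2)\mt F_1$ and $SO(2)\mt F_2$ are joined only through the single rank-one direction $-2\delta\vc e_2\otimes\vc e_2$, so the rotation in each well is pinned down by its action on $\vc e_1$. This is exactly the algebraic feature that is lost in the two rank-one connection setting of \eqref{defK_intro}--\eqref{eq:matrices_intro} and is why convex integration solutions can exist there. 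A parallel constancy-in-$\vc x$ statement for $\mt Q$ and $\lambda$ under affine boundary data, if desired, can then be obtained from a standard curl-freeness argument applied to $\nabla\vc y(\vc x)=\mt Q(\vc x)\diag(1,1-\delta+2\lambda(\vc x)\delta)$, combined with the boundary condition $\vc y=\mt F\vc x$ on $\partial\Omega$.
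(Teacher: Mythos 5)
Your proposal follows the same strategy as the paper's proof (which in turn follows Ball--James, Theorem~7.1 in \cite{BJ92}): use the quasiconvex test functions $|\mt F\vc e_1|^2$ and $|f(\mt F)\Cof(\mt F)\vc e_1|$, pinch to saturate Jensen's inequality, and extract from the equality case that the first column of every matrix in $\supp\nu_{\vc x}$ is determined. The core idea is identical, so I classify this as essentially the same approach; however there is one structural difference that you should not gloss over.

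The paper applies Jensen to the \emph{averaged} homogeneous Young measure $\overline{\langle\nu_{\vc x},\cdot\rangle}$, whose barycenter is the fixed matrix $\mt F$ (pinned by the affine boundary data). Equality in Jensen then yields $\overline{\langle\nu_{\vc x},|(\mt A-\mt F)\vc e_1|^2\rangle}=0$, i.e.\ $\mt A\vc e_1=\mt F\vc e_1$ for $\nu_{\vc x}$-a.e.\ $\mt A$ and a.e.\ $\vc x$, so the rotation $\mt Q$ is already \emph{independent of $\vc x$}, and only the constancy of $\lambda(\vc x)$ remains to be argued via the $\partial_1\vc z=0$ observation. You instead apply Jensen pointwise in $\vc x$, and equality in Jensen only gives $\mt G\vc e_1=\nabla\vc y(\vc x)\vc e_1$ for $\nu_{\vc x}$-a.e.\ $\mt G$; this leaves the rotation $\mt Q(\vc x)$ free to vary with $\vc x$. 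You then say the constancy of $\mt Q(\vc x)$ and $\lambda(\vc x)$ ``can be obtained, if desired, from a standard curl-freeness argument.'' But this constancy is part of the claim, not optional, and the argument required is the rigidity of the differential inclusion $\nabla\vc y\in K^{lc}$ (a one-parameter family of wells), which is more than the one-line observation the paper needs. A clean way to close your gap without the full curl-freeness computation is to mimic the paper's averaging: since $\overline{\nabla\vc y}=\mt F$ by the affine boundary condition, both $\nabla\vc y(\vc x)\vc e_1$ and $\mt F\vc e_1$ lie on the unit circle, and a probability measure on $S^1$ whose barycenter also lies on $S^1$ must be a Dirac, so $\nabla\vc y(\vc x)\vc e_1=\mt F\vc e_1$ a.e.; this recovers the constant $\mt Q$ directly, and the $\partial_1\vc z=0$ argument then pins down $\lambda$ exactly as in the paper. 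As written, your final paragraph understates what is still to be proved.
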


In order to observe this, we follow the argument of Theorem 7.1. in \cite{BJ92}.

\begin{proof}
By definition we have $\mt F \in K^{qc}$ which in particular entails that $(\mt F^T \mt F)_{11}=1$. Further, by Lemma \ref{lem:lc_hull} there exists $\mt Q \in SO(2)$ such that 
\begin{align}
\label{eq:F}
\mt F = \lambda \mt Q \mt F_1 + (1-\lambda) \mt Q\mt F_2.
\end{align}
By the fact that the identity map is a Null-Lagrangian, we have $\mt F = \lbr \nu_{\vc x}, \mt A \rbr $, where $\lbr f(\mt A),  \nu_x \rbr := \int\limits_{\R^{2\times 2}} f(\mt A) d\nu_{\vc x}(\mt A) $. Denoting by $\overline{\langle\nu_{\vc x},f(\mt A) \rangle}= |\Omega|^{-1} \int_\Omega\int_{\R^{2\times 2}}f(\mt A) \,d\nu_{\vc x}(\mt A)\, d\vc x$ the average integration in $\vc x$ of $\langle\nu_{\vc x},f(\mt A) \rangle$, we obtain from Jensen's inequality and the observation that $(\mt A^T \mt A)_{11}=1$ for all $\mt A \in K$
\begin{align*}
1 = (\mt F^T \mt F)_{11} = (\overline{\lbr \nu_{\vc x}, \mt A \rbr}^T \overline{\lbr \nu_{\vc x}, \mt A \rbr})_{11}
\leq \lbr \nu_{\vc x}, (\mt A^T \mt A)_{11} \rbr = 1.
\end{align*}
Thus, 
\begin{align*}
\overline{\lbr \nu_{\vc x}, [(\mt A^T \mt A)_{11}-(\overline{\lbr \nu_{\vc x}, \mt A\rbr}^T\overline{\lbr \nu_{\vc x}, \mt A\rbr} )_{11}] \rbr } =0,
\end{align*}
which is equivalent to 
\begin{align*}
\overline{\lbr \nu_{\vc x}, [(\mt A-\mt F)^T (\mt A-\mt F)]_{11}\rbr} =\overline{\lbr \nu_{\vc x}, |(\mt A - \mt F) \vc e_1|^2\rbr} = 0 .
\end{align*}
As a consequence, 
\begin{align*}
(\mt A-\mt F) \vc e_1 = 0 \mbox{ for all }\mt A \in \supp \,\nu_{\vc x}.
\end{align*}
By \eqref{eq:F} and by $\mt F_1 \vc e_1 = \vc e_1 = \mt F_2 \vc e_1$ we obtain that $\mt A \vc e_1 = \mt Q \vc e_1$ for some $\mt Q \in SO(2)$. Therefore, $\supp\, \nu_{\vc x} \subset K$ only contains the elements $\mt Q \mt F_1$ and $\mt Q \mt F_2$. In particular,
\begin{align*}
\nu_{\vc x} = \lambda(\vc x) \mt Q \mt F_1 + (1-\lambda(\vc x)) \mt Q \mt F_2,
\end{align*}
where $\lambda: \Omega \rightarrow [0,1]$.
Using that $\nabla \vc y(\vc x) = \lambda(\vc x) \mt Q \mt F_1 + (1-\lambda(\vc x))\mt  Q \mt F_2$, we obtain that $\frac{\p}{\p x_1} \vc y = \mt F \vc e_1$, and thus that the function $ \vc z(\vc x) = \vc y(\vc x)-\mt F\vc x$ only depends on $x_2$. But since $\vc z(\vc x)=0$ on $\partial \Omega$ this implies that $\vc z(\vc x)=0$. As a consequence, $\lambda(\vc x)=\lambda \in (0,1)$ which concludes the argument.
\end{proof}

\appendix
\section{Notation}
In the convex integration algorithm described in Section \ref{sec:Algorithm}, and in the constructions of Section \ref{sec:Covering} the notation may sometimes be complex and difficult to remember throughout the paper. For this reason, in this section we summarise the main quantities which come into play.
\begin{itemize}
\item $\dist(\mathcal{A},\mathcal{B}):=\inf_{s\in\mathcal{A}}\inf_{t\in\mathcal{B}}|s-t|$ is the distance between two sets $\mathcal{A},\mathcal{B}$ (we remark that with slight abuse of notation this does \emph{not} yield a metric); 
\item $B(t,r)$ denotes the open ball centred at $t$ and of radius $r$;
\item $|\mt F|$ given $\mt F\in\R^{2\times 2}$ denotes the Frobenius norm on matrices defined as $|\mt F|^2 := \tr ( \mt F^T\mt F)$;
\item $|\vc m|$ denotes the Euclidean norm of a vector $\vc m\in\R^2$;
\item $\mt C_{\mt F} = \mt C(\mt F) = \mt F^T \mt F$;
\item $K$ is the two-well set defined in \eqref{defK}, set of our differential inclusion;
\item $K^{qc}$ is the quasiconvex hull of $K$ (see \eqref{def qc});
\item $\mathcal{G}:= \{\mt C\in\R^{2\times 2}_{Sym^+}\colon \mt C=\mt F^T\mt F,\, \mt F\in K^{qc}\}$ is the set of Cauchy-Green tensors obtained by deformation gradients in $K^{qc}$;
\item $\mathcal{V}_k$ is a sequence of sets in $K^{qc}$ that forms an in-approximation for $K$ (see Definition \ref{defi:cover});
\item $\Omega_{h,\mt R}$ is a rhombus whose axes are of length $2,2h$ rotated by a rotation $\mt R$. Thanks to Lemma \ref{lem:Conti}, at every iteration of the convex integration algorithm we can replace the deformation gradient in rotated and rescaled versions of $\Omega_{h,\mt R}$ with a deformation gradient which is closer to (but not in) $K$. $\mt R$ depends on the new deformation gradient as well as the old boundary values;
\item $\mathcal{A}_g$: Given a set $\mathcal{A}\in\R^2$, we denote by $\mathcal{A}_g$ its subset, where, at a given iteration of the convex integration algorithm, has been replaced with a deformation gradient closer (see Lemma \ref{lem:A3} and Lemma \ref{lem:A4} for the precise notion of close) to $K$; 
\item $\mathcal{A}_g^{\star}$: Given a set $\mathcal{A}\in\R^2$, $\mathcal{A}_g^{\star}$ is the subset of $\mathcal{A}_g$ where, at a given iteration of the convex integration algorithm, the deformation gradient is replaced by a close deformation gradient (see Lemma \ref{lem:A4} for details);
\item $|f|_{BV(\Omega)}$ denotes the $BV-$seminorm of a function $f\in L^1(\Omega)$ and is given by $|f|_{BV(\Omega)}:=\sup\{\int_\Omega f \phi \, d\vc x\colon \phi\in C^1_c(\Omega),\,|\phi|\leq 1\}$;
\item $\Per(\mathcal{A})$ denotes the perimeter of a measurable set $\mathcal{A}\subset\R^2$ in $\Omega$. It is defined by $\Per(\mathcal{A}):=|\chi_{\mathcal{A}}|_{BV(\Omega)}$ where $\chi_\mathcal{A}$ is the indicator function on the set $\mathcal{A}$. (see e.g., \cite{EvansGariepy}). In our case, since we are mostly considering sets $\mathcal{A}$ which are the finite union of triangles in $\R^2$, our notion will coincide with the classic notion of perimeter;
\item $\mathcal{C}$ is the set of all possible triangles (see Definition \ref{defi:cover});
\item $\mathcal C_{\mt R,h}$ is the subset of $\mathcal{C}$ which are isosceles and whose height which is also a symmetry axis coincides with the longest axis of ${\Omega_{h,\mt R}}$ (see Definition \ref{defi:cover}); 
\item $\nabla\vc u_k$ deformation gradient at the $k-$th iteration of the convex iteration algorithm;
\item $\chi_k^i$ are indicator functions of the regions where $\nabla\vc u_k$ is closer to $SO(2)\mt F_0$ and $SO(2)\mt F_0^{-1}$ respectively (see Definition \ref{def chis});
\item $\hat{\Omega}_k\subset\mathcal{C}$ is a collection of disjoint triangles covering, up to a null set, $\Omega$. We have that $\nabla\vc u_k$ is equal to a constant on every $\tilde{\Omega}\in \hat{\Omega}_k$ (see Section \ref{sec:Algorithm});
\item $\tilde{\Omega}$ is the prototypical element of $\hat{\Omega}_k\subset\mathcal{C}$;
\item $l_k\colon\hat{\Omega}_k\to\mathbb{N}$ denotes the depth of the algorithm in $\tilde{\Omega}\in \hat{\Omega}_k$. That is, the deformation gradient $\nabla\vc u_k\in \mathcal{V}_{\ell_k(\Omega)}$ in a given region $\tilde{\Omega}\in \hat{\Omega}_k$ (see Section \ref{sec:Algorithm});
\item $\mathcal{D}(\tilde\Omega)$ is the set of all descendants of $\tilde\Omega$ in the sense of Definition \ref{def descendants}.
\end{itemize}

\bibliographystyle{alpha}
\bibliography{citations1}

\newcommand{\etalchar}[1]{$^{#1}$}
\begin{thebibliography}{CDPR{\etalchar{+}}19}

\bibitem[Bal04]{Ball:ESOMAT}
John~M. Ball.
\newblock Mathematical models of martensitic microstructure.
\newblock {\em Materials Science and Engineering: A}, 378(1–2):61 -- 69,
  2004.
\newblock European Symposium on Martensitic Transformation and Shape-Memory.

\bibitem[BJ92]{BJ92}
John~M Ball and Richard~D James.
\newblock Proposed experimental tests of a theory of fine microstructure and
  the two-well problem.
\newblock {\em Phil. Trans. R. Soc. Lond. A}, 338(1650):389--450, 1992.

\bibitem[CC10]{CC10}
Milena Chermisi and Sergio Conti.
\newblock Multiwell rigidity in nonlinear elasticity.
\newblock {\em SIAM Journal on Mathematical Analysis}, 42(5):1986--2012, 2010.

\bibitem[CC14]{ContiChan14a}
Allan Chan and Sergio Conti.
\newblock Energy scaling and domain branching in solid-solid phase transitions.
\newblock In {\em Singular phenomena and scaling in mathematical models}, pages
  243--260. Springer, Cham, 2014.

\bibitem[CC15]{CC14}
Allan Chan and Sergio Conti.
\newblock Energy scaling and branched microstructures in a model for
  shape-memory alloys with {$SO(2)$} invariance.
\newblock {\em Mathematical Models and Methods in Applied Sciences},
  25(06):1091--1124, 2015.

\bibitem[CDDD03]{CDDD03}
Albert Cohen, Wolfgang Dahmen, Ingrid Daubechies, and Ronald DeVore.
\newblock Harmonic analysis of the space {B}{V}.
\newblock {\em Revista Matematica Iberoamericana}, 19(1):235--263, 2003.

\bibitem[CDPR{\etalchar{+}}19]{CDPRZZ19}
Pierluigi Cesana, Francesco Della~Porta, Angkana R{\"u}land, Christian
  Zillinger, and Barbara Zwicknagl.
\newblock Exact constructions in the (non-linear) planar theory of elasticity:
  From elastic crystals to nematic elastomers.
\newblock {\em arXiv preprint arXiv:1904.08820}, 2019.

\bibitem[CKZ17]{CKZ17}
S~Conti, M~Klar, and B~Zwicknagl.
\newblock Piecewise affine stress-free martensitic inclusions in planar
  nonlinear elasticity.
\newblock {\em Proceedings of the Royal Society A: Mathematical, Physical and
  Engineering Sciences}, 473(2203):20170235, 2017.

\bibitem[Con00]{C1}
Sergio Conti.
\newblock Branched microstructures: scaling and asymptotic self-similarity.
\newblock {\em Comm. Pure Appl. Math}, 53(11):1448--1474, 2000.

\bibitem[Con08]{C}
Sergio Conti.
\newblock Quasiconvex functions incorporating volumetric constraints are
  rank-one convex.
\newblock {\em Journal de math{\'e}matiques pures et appliqu{\'e}es},
  90(1):15--30, 2008.

\bibitem[CS06a]{ContiSchweizer06a}
Sergio Conti and Ben Schweizer.
\newblock Gamma convergence for phase transitions in impenetrable elastic
  materials.
\newblock In {\em Multi-scale problems and asymptotic analysis}, volume~24 of
  {\em GAKUTO Internat. Ser. Math. Sci. Appl.}, pages 105--118. Gakkotosho,
  Tokyo, 2006.

\bibitem[CS06b]{ContiSchweizer06}
Sergio Conti and Ben Schweizer.
\newblock Rigidity and gamma convergence for solid-solid phase transitions with
  {SO}(2) invariance.
\newblock {\em Comm. Pure Appl. Math.}, 59(6):830--868, 2006.

\bibitem[CT05]{CT05}
Sergio Conti and Florian Theil.
\newblock Single-slip elastoplastic microstructures.
\newblock {\em Archive for Rational Mechanics and Analysis}, 178(1):125--148,
  2005.

\bibitem[Dac07]{D}
Bernard Dacorogna.
\newblock {\em Direct methods in the calculus of variations}, volume~78.
\newblock Springer, 2007.

\bibitem[DF18]{DavoliFriedrich18}
Elisa Davoli and Manuel Friedrich.
\newblock Two-well rigidity and multidimensional sharp-interface limits for
  solid-solid phase transitions.
\newblock {\em arXiv preprint arXiv:1810.06298}, 2018.

\bibitem[DM95]{DM1}
Georg Dolzmann and Stefan M{\"u}ller.
\newblock Microstructures with finite surface energy: the two-well problem.
\newblock {\em Archive for Rational Mechanics and Analysis}, 132:101--141,
  1995.

\bibitem[DM96]{DM96}
Bernard Dacorogna and Paolo Marcellini.
\newblock Th{\'e}or{\`e}mes d'existence dans les cas scalaire et vectoriel pour
  les {\'e}quations de {H}amilton-{J}acobi.
\newblock {\em Comptes Rendus de l'Academie des Sciences-Serie I-Mathematique},
  322(3):237--240, 1996.

\bibitem[DM12]{DaM12}
Bernard Dacorogna and Paolo Marcellini.
\newblock {\em Implicit partial differential equations}, volume~37.
\newblock Springer Science \& Business Media, 2012.

\bibitem[DP19a]{FDP1}
Francesco Della~Porta.
\newblock Analysis of a moving mask hypothesis for martensitic transformations.
\newblock {\em Journal of Nonlinear Science}, 2019.

\bibitem[DP19b]{FDP2}
Francesco Della~Porta.
\newblock On {V}{I}{I} junctions: non stress-free junctions between martensitic
  plates.
\newblock {\em arXiv:1902.00979}, 2019.

\bibitem[EG15]{EvansGariepy}
Lawrence~C. Evans and Ronald~F. Gariepy.
\newblock {\em Measure theory and fine properties of functions}.
\newblock Textbooks in Mathematics. CRC Press, Boca Raton, FL, revised edition,
  2015.

\bibitem[Fal04]{Falc}
Kenneth Falconer.
\newblock {\em Fractal geometry: mathematical foundations and applications}.
\newblock John Wiley \& Sons, 2004.

\bibitem[JL13]{JerrardLorent2013}
Robert~L. Jerrard and Andrew Lorent.
\newblock On multiwell {L}iouville theorems in higher dimension.
\newblock {\em Adv. Calc. Var.}, 6(3):247--298, 2013.

\bibitem[JM96]{JM96}
St\'{e}phane Jaffard and Yves Meyer.
\newblock Wavelet methods for pointwise regularity and local oscillations of
  functions.
\newblock {\em Mem. Amer. Math. Soc.}, 123(587):x+110, 1996.

\bibitem[Kir98]{K}
Bernd Kirchheim.
\newblock Lipschitz minimizers of the 3-well problem having gradients of
  bounded variation.
\newblock {\em MPI preprint}, 1998.

\bibitem[KLLR19]{KLLR19}
Georgy Kitavtsev, Gianluca Lauteri, Stephan Luckhaus, and Angkana R\"{u}land.
\newblock A compactness and structure result for a discrete multi-well problem
  with {$SO(n)$} symmetry in arbitrary dimension.
\newblock {\em Arch. Ration. Mech. Anal.}, 232(1):531--555, 2019.

\bibitem[Lor06]{Lorent06}
Andrew Lorent.
\newblock The two-well problem with surface energy.
\newblock {\em Proceedings of the Royal Society of Edinburgh Section A:
  Mathematics}, 136(4):795--805, 2006.

\bibitem[Mat99]{Mat}
Pertti Mattila.
\newblock {\em Geometry of sets and measures in {E}uclidean spaces: fractals
  and rectifiability}, volume~44.
\newblock Cambridge university press, 1999.

\bibitem[M{\v{S}}99]{MS}
Stefan M{\"u}ller and Vladim{\'i}r {\v{S}}ver{\'a}k.
\newblock Convex integration with constraints and applications to phase
  transitions and partial differential equations.
\newblock {\em Journal of the European Mathematical Society}, 1:393--422, 1999.
\newblock 10.1007/s100970050012.

\bibitem[M{\"u}l99]{M1}
Stefan M{\"u}ller.
\newblock Variational models for microstructure and phase transitions.
\newblock In {\em Calculus of variations and geometric evolution problems},
  pages 85--210. Springer, 1999.

\bibitem[Ped97]{P}
Pablo Pedregal.
\newblock {\em Parametrized measures and variational principles}, volume~30.
\newblock Birkhauser Basel, 1997.

\bibitem[RTZ18]{RTZ19}
Angkana R{\"u}land, Jamie~M Taylor, and Christian Zillinger.
\newblock Convex integration arising in the modelling of shape-memory alloys:
  some remarks on rigidity, flexibility and some numerical implementations.
\newblock {\em Journal of Nonlinear Science}, pages 1--48, 2018.

\bibitem[R{\"u}l16]{R16}
Angkana R{\"u}land.
\newblock The cubic-to-orthorhombic phase transition: Rigidity and non-rigidity
  properties in the linear theory of elasticity.
\newblock {\em Archive for Rational Mechanics and Analysis}, 221(1):23--106,
  2016.

\bibitem[RZZ18]{RZZ18}
Angkana R{\"u}land, Christian Zillinger, and Barbara Zwicknagl.
\newblock Higher {S}obolev regularity of convex integration solutions in
  elasticity: The {D}irichlet problem with affine data in int{$(K^{lc})$}.
\newblock {\em SIAM Journal on Mathematical Analysis}, 50(4):3791--3841, 2018.

\bibitem[RZZ19]{RZZ16}
Angkana R{\"u}land, Christian Zillinger, and Barbara Zwicknagl.
\newblock Higher {S}obolev regularity of convex integration solutions in
  elasticity: The planar geometrically linearized hexagonal-to-rhombic phase
  transformation.
\newblock {\em Journal of Elasticity,
  https://doi.org/10.1007/s10659-018-09719-3}, 2019.

\bibitem[Sic99]{Si}
Winfried Sickel.
\newblock Pointwise multipliers of {L}izorkin-{T}riebel spaces.
\newblock In {\em The Maz’ya anniversary collection}, pages 295--321.
  Springer, 1999.

\bibitem[\v{S}93]{Sverak}
Vladim\'{\i}r \v{S}ver\'{a}k.
\newblock On the problem of two wells.
\newblock In {\em Microstructure and phase transition}, volume~54 of {\em IMA
  Vol. Math. Appl.}, pages 183--189. Springer, New York, 1993.

\end{thebibliography}

\end{document}